\theoremstyle{plain}
\newtheorem{theorem}{Theorem}[section]
\newtheorem{corollary}[theorem]{Corollary}
\newtheorem{lemma}[theorem]{Lemma}
\newtheorem{proposition}[theorem]{Proposition}
\theoremstyle{definition}
\theoremstyle{definition}
\newtheorem{remark}[theorem]{Remark}
\newtheorem{example}[theorem]{Example}  
\newcommand{\NN}{\mathbb{N}}
\newcommand{\RR}{\mathbb{R}}
\newcommand{\PP}{\mathbb{P}}
\newcommand{\EE}{\mathbb{E}}
\newcommand{\cS}{\mathcal{S}}
\newcommand*\diff{\mathop{}\!\mathrm{d}}
\newcommand{\nrm}[1]{\left\lVert#1\right\rVert}
\newcommand{\bA}{\mathbf{A}}
\newcommand{\ba}{\mathbf{a}}
\newcommand{\bB}{\mathbf{B}}
\newcommand{\bb}{\mathbf{b}}
\newcommand{\bc}{\mathbf{c}}
\newcommand{\bM}{\mathbf{M}}
\newcommand{\bX}{\mathbf{X}}
\newcommand{\bR}{\mathbf{R}}
\newcommand{\bu}{\mathbf{u}}
\newcommand{\bW}{\mathbf{W}}
\newcommand{\RV}{\operatorname{RV}}
\newcommand{\MRV}{\operatorname{MRV}}
\begin{document}
	\title{A 2$\times$2 random switching model and its dual risk model} 
	\author{Anita Behme\thanks{Technische Universit\"at
			Dresden, Institut f\"ur Mathematische Stochastik, Zellescher Weg 12-14, 01069 Dresden, Germany, \texttt{anita.behme@tu-dresden.de} and \texttt{philipp.strietzel@tu-dresden.de}, phone: +49-351-463-32425, fax:  +49-351-463-37251.}\; and Philipp Strietzel$^\ast$}
	\date{\today}
	\maketitle
	
	\vspace{-1cm}
	\begin{abstract}
	In this article a special case of two coupled M/G/1-queues is considered, where two servers are exposed to two types of jobs that are distributed among the servers via a random switch. In this model the asymptotic behavior of the workload buffer exceedance probabilities for the two single servers/ both servers together/ one (unspecified) server is determined. Hereby one has to distinguish between jobs that are either heavy-tailed or light-tailed. The results are derived via the dual risk model of the studied coupled M/G/1-queues for which the asymptotic behavior of different ruin probabilities is determined.
	\end{abstract}
	
	2020 {\sl Mathematics subject classification.} 60K25, 94C11 (primary), 60G10, 91G05 (secondary)\\
	
	{\sl Keywords:} bipartite network, bivariate compound Poisson process, hitting probability, coupled M/G/1-queues,  random switch, regular variation, ruin theory, queueing theory
	
	%	\let\footnote=\endnote

	%%%%%%%%%%%%%%%%%%%%%%%%%%%%%%%%%%%%%%%%%%%%%%%%%%%%%%%%%%
	\section{Introduction}\label{S0}
	\setcounter{equation}{0}
	%%%%%%%%%%%%%%%%%%%%%%%%%%%%%%%%%%%%%%%%%%%%%%%%%%%%%%%%%%
	
	A general $2\times 2$ switch is modelled by a two-server queueing system with two arrival streams. 
	A well-studied special cases of such a switch is given by the \emph{$2\times 2$ clocked buffered switch}, where in a unit time interval each arrival stream can generate only one arrival and each server can serve only one customer; see e.g. \cite{Adan2001,Boxma1993, Cohen1998} and others. This switch is commonly used to model a device used in data-processing networks for routing messages from one node to another.
	
	In this paper we study a $2\times 2$ switch that operates in continuous time, i.e. the arrivals are modelled by two independent compound Poisson processes. Every incoming job is of random size and it is then distributed to the two servers by a random procedure. This leads to a pair of coupled M/G/1-queues. In this model we study the equilibrium probabilities of the resulting workload processes. In particular we determine the asymptotic behavior of the probabilities that the workloads exceed a prespecified buffer. Hereby we will distinguish between workload exceedance of a specific single server, both servers, or one unspecified server. As we will see, the behavior of these workload exceedance probabilities strongly depends on whether jobs are heavy-tailed or light-tailed and we will therefore consider both cases separately.
	
	A related model to the one we study has been introduced in \cite{BoxmaIvanovs} where a pair of coupled queues driven by independent spectrally-positive Lévy processes is introduced. The coupling procedure however is completely different to the switch we shall use. For this model, in \cite{BoxmaIvanovs}, the joint transform of the stationary workload distribution in terms of Wiener-Hopf factors is determined. Two parallel queues are also considered e.g. in \cite{Flatto1984} for an M/M/2-queue where arriving customers simultaneously place two demands handled independently by two servers. We refer to \cite{asmussen} and \cite{mandjes} and references therein for more general information on Lévy-driven queueing systems.
	
	As it is well known, there are several connections between queueing and risk models. In particular the workload
	(or waiting time) in an M/G/1 queue with compound Poisson input is related to the ruin probability in the prominent Cramér-Lundberg risk model, in which the arrival process of claims is defined to be just the same compound Poisson process; see e.g. \cite{asmussen} or \cite{Kyprianou2014}. To be more precise, let 
	$$R(t)=u+ct-\sum_{i=1}^{N(t)} X_i, \quad t\geq 0,$$
	be a \emph{Cramér-Lundberg risk process} with initial capital $u>0$, premium rate $c>0$, i.i.d. claims $\{X_i,i\in\NN\}$ with cdf $F$ such that $X_1>0$ a.s. and $\EE[X_1]=\mu<\infty$, and a claim number process $(N(t))_{t\geq 0}$ which is a Poisson process with rate $\lambda>0$. Then it is well known that the ruin probability 
	$$\Psi(u)=\PP(R(t)<0 \quad \text{for some }t\geq 0)$$
	tends to $0$ as $u\to \infty$, as long as the \emph{net-profit condition}  $\lambda\mu<c$ holds, while otherwise $\Psi(u)\equiv 1$. In particular, if the claims sizes are light-tailed in the sense that an adjustment coefficient $\kappa>0$ exists, i.e.
	$$\exists \kappa>0: \quad \int_0^\infty e^{\kappa x} \overline{F}(x) \diff x = \frac{c}{\lambda},$$
	where $\overline{F}(x)=1-F(x)$ is the tail-function of the claim sizes, then the ruin probability $\Psi(u)$ satisfies the famous \emph{Cramér-Lundberg inequality} (cf.  \cite[Eq. XIII (5.2)]{asmussen}, \cite[Eq. I.(4.7)]{asmussenalbrecher})
	$$\Psi(u)\leq e^{-\kappa u}, \quad u >0.$$
	Furthermore in this case the \emph{Cramér-Lundberg approximation} states that (cf. \cite[Thm. XIII.5.2]{asmussen}, \cite[Eq. I.(4.3)]{asmussenalbrecher})
	$$\lim_{u\to\infty} e^{\kappa u}\Psi(u)=C,$$
	for some known constant $C\geq 0$ depending on the chosen parameters of the model. On the contrary, for heavy-tailed claims
	with a subexponential integrated tail function $\frac{1}{\mu} \int_0^x \overline{F}(y) \diff y$ it is known that (cf. \cite[Thm. X.2.1]{asmussenalbrecher})
	$$\lim_{u\to \infty} \left( \frac{1}{\mu} \int_u^\infty \overline{F}(y) \diff y \right)^{-1}\Psi(u) =\frac{\lambda \mu}{c-\lambda\mu},$$
	and in the special case of tail-functions that are regularly varying this directly implies that the ruin probability  decreases polynomially.\\
	Via the mentioned duality these results can easily be translated into corresponding results on the workload exceedance probability of an M/G/1-queue.	
	
	In this paper we shall use an analogue duality between queueing and risk models in a multi-dimensional setting as it was introduced in \cite{Badila2014}. This allows us to obtain results on the workload exceedance probabilities of the $2\times 2$ switch by studying the corresponding ruin probabilities in the two-dimensional dual risk model.
	
	Bivariate risk models are a well-studied field of research. A prominent model in the literature, that can be interpreted as a special case of the dual risk model in this paper, has been introduced by Avram et. al. \cite{Avram2007}. In this so-called \emph{degenerate model} a single claim process is shared via prespecified proportions between two insurers (see e.g. \cite{Avram2009,Avram2007,Badescu2011, Fossetal, Hu2013}). The model allows for a rescaling of the bivariate process that reduces the complexity to a one-dimensional ruin problem. Exact results and sharp asymptotics for this model have been obtained in  \cite{Avram2009}, where also the asymptotic behavior of ruin probabilities of a general two-dimensional Lévy model under light-tail assumptions is derived.  In \cite{Fossetal} the degenerate model is studied in the presence of heavy tails; specifically asymptotic formulae for the finite time as well as the infinite time ruin probabilities under the assumption of subexponential claims are provided. In \cite{Hu2013} the degenerate model is extended by a constant interest rate. In \cite{Badescu2011} another generalization of the degenerate model is studied that introduces a second source of claims only affecting one insurer. Our risk model defined in Section \ref{S2b} can be seen as a further generalization of the model in \cite{Badescu2011} because of the random sharing of every single claim, compare also with Section \ref{S5b} below.\\
There exist plenty of other papers concerning bivariate risk models of all types and several approaches to tackle the problem. E.g. \cite{Chan2003,Dang2009} consider bivariate risk models of Cram\'er-Lundberg-type with correlated claim-counting processes and derive partial integro-differential equations for infinite-time ruin and survival probabilities in these models.  Various authors focus on finite time ruin probabilities under different assumptions, see e.g. \cite{Chen2013b, Chen2013a, Chen2010, Jiang2015, Li2007, Yang2014, Yuen2006}. E.g. in \cite{Yuen2006} the finite time survival probability is approximated using a so-called bivariate compound binomial model and bounds for the infinite-time ruin probability are obtained using the concept of association.
\\ 
In general dimensions, multivariate ruin is studied e.g. in \cite{Behme2020, Bregman, Cai2005, Gong2012, Konstantinides2016, ramasubramannian}. In particular, in \cite{Behme2020} a bipartite network induces the dependence between the risk processes and this model is in some sense similar to the dual risk model in this paper. Further, in \cite{Collamore1}, multivariate risk processes with heavy-tailed claims are treated and so-called ruin regions are studied, that is,
	sets in $\RR^d$ which are hit by the risk process with small probability. Multivariate regularly varying claims are also
	assumed e.g. in \cite{HultLindskogTR} and \cite{Konstantinides2016}, where in \cite{HultLindskogTR} several lines of business are considered that can balance out ruin, while \cite{Konstantinides2016} focuses exclusively on simultaneous ruin of all business lines/agents. Further, \cite{Samorodnitsky2016} introduces a notion of multivariate subexponentiality and applies this on a multivariate risk process. Note that \cite{HultLindskogTR} and \cite{Samorodnitsky2016} both consider rather general regions of ruin and some of the results from these papers will be applied on our dual risk model.
 
The paper is outlined as follows. In Section \ref{S2} we specify the random switch model that we are interested in and introduce the corresponding dual risk model. Section \ref{S3} is devoted to study both models under the assumption that jobs/claims are heavy-tailed and it is divided into two parts. First, in Section \ref{S3SE} we focus on subexponentiality.  As we shall rely on results from \cite{Samorodnitsky2016} we first concentrate on the risk model in Section \ref{S3SEa} and then transfer our findings to the switch model in Section \ref{S3SEb}. Second, we treat the special case of regular variation in Section \ref{S3RV}, where we start with results for the risk model in Section \ref{S3a}, taking advantage of results given in \cite{HultLindskogTR}, before we transfer our findings to the switch model in Section \ref{S3b}.
	In Section \ref{S4} we assume all jobs/claims to be light-tailed and again first consider the risk model in Section \ref{S4a} before converting the results to the switch context in Section \ref{S4b}. Two particular examples of the switch will then be outlined in Section \ref{S5} where we  also compare the behavior of the exceedance probabilities for different specifications of the random switch via a short simulation study in Section \ref{S5c}. The final Section \ref{S6} collects the proofs of all our findings.

	%%%%%%%%%%%%%%%%%%%%%%%%%%%%%%%%%%%%%%%%%%%%%%%%%%%%%%%%%%
	\section{The switching model and its dual}\label{S2}
	\setcounter{equation}{0}
	%%%%%%%%%%%%%%%%%%%%%%%%%%%%%%%%%%%%%%%%%%%%%%%%%%%%%%%%%%

\subsection{The $2\times 2$ random switching model}\label{S2a}

	Let $\mathcal{W}_1,\mathcal{W}_2$ be servers (or workers) with work speeds $c_1, c_2>0$ and let $\mathcal{J}_1,\mathcal{J}_2$ be two job generating objects. We assume that both objects generate jobs independently with Poisson rates $\lambda_1,\lambda_2>0$, respectively, and that the workloads generated by one object are i.i.d. positive random variables. More specifically, we identify the objects $\mathcal{J}_j$, $j=1,2$, with two independent compound Poisson processes \begin{equation*}
\sum_{k=1}^{N_j(t)} X_{j,k}, \qquad j=1,2 \end{equation*} with jumps $\{X_{j,k}, k\in\mathbb{N}\}$ being i.i.d. copies of two random variables $X_j\sim F_j$ such that $F_j(0)=0$ and $\EE[X_j]<\infty$, $j=1,2$.

The jobs shall be distributed to the two servers by a random switch that is modeled by a random $(2\times 2)$-matrix $\bA=(A_{ij})_{i,j=1,2}$, independent of all other randomness and satisfying the following conditions:
\begin{enumerate}[(i)]
\item $A_{ij}\in[0,1]$ for all $i,j=1,2$, meaning that a job can not be assigned more than totally or less than not at all to a certain server,
\item $\sum_{i=1}^2 A_{ij}=1$ for all $j=1,2$, i.e. every job must be assigned entirely to the servers.
\end{enumerate} 
The switch matrix is triggered independently at every arrival of a job. 

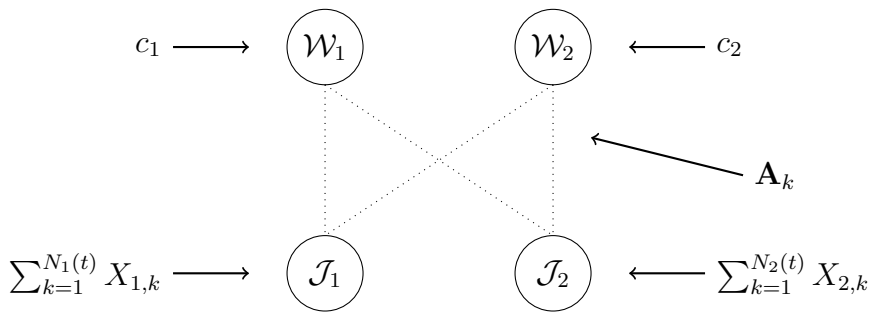
\begin{figure}[!htb]
	\centering
	\begin{tikzpicture}
	\draw (9,3) circle(0.5) node{$\mathcal{W}_1$};
	\draw (12,3) circle(0.5) node{$\mathcal{W}_2$};
	\draw (9,0) circle(0.5) node{$\mathcal{J}_1$};
	\draw (12,0) circle(0.5) node{$\mathcal{J}_2$};
	\draw [style=dotted] (9,2.5) -- (9,0.5);
	\draw [style=dotted] (9,2.5) -- (12,0.5);
	\draw [style=dotted] (12,2.5) -- (9,0.5);
	\draw [style=dotted] (12,2.5) -- (12,0.5);
	\draw [->, thick] (7,3) node[left]{$c_1$} -- (8,3);
	\draw [<-, thick] (13,3) -- (14,3) node[right]{$c_2$};
	\draw [<-, thick](12.5,1.8) -- (14.5,1.3) node[right]{$\mathbf{A}_k$};
	\draw [<-, thick] (8,0) -- (7,0) node[left]{$\sum_{k=1}^{N_1(t)} X_{1,k}$};
	\draw [<-, thick] (13,0) -- (14,0) node[right]{$\sum_{k=1}^{N_2(t)} X_{2,k}$};
	\end{tikzpicture}
	\caption{The random switching model}
	\label{Fig_Model}
\end{figure}

We are interested in the coupled $M/G/1$-queues defined by the resulting storage processes of the two servers, i.e.
\begin{equation}\label{eq-def-W}
W_i(t) = \sum_{j=1}^2\sum_{k=1}^{N_j(t)} (A_{ij})_{k}X_{j,k} - \int_0^t c_i(W_i(s))\diff s \end{equation}
where $\{ \bA_k, k\in\mathbb{N}\}$ are i.i.d. copies of $\bA$ and 
$$c_i(x)=\begin{cases} 0,&  x\leq 0, \\ c_i,& x>0, \end{cases} \quad i=1,2.$$ 
In particular we aim to study the stationary distribution of the multivariate storage process $\bW(t)=(W_1(t), W_2(t))^\top$, that is the distributional limit of $\bW(t)$ as $t\to\infty$ whenever it exists. In this case we write   \begin{equation} \label{eq_def_stationary_dist}  \mathbf{W}:=(W_1,W_2)^\top \end{equation}
for a generic random vector with this steady-state distribution. Note that here and in the following $(\cdot) ^\top$ denotes the  transpose of a vector or matrix.

Let $u>0$ be some fixed buffer barrier for the system and $\mathbf{b}=(b_1,b_2)^\top\in(0,1)^2$
with $b_1+b_2=1$. Set $\bu=\bb u$, i.e. $u_i=b_i u$. Then we are  interested in the \emph{probabilities that the single servers exceed their barriers}
\begin{equation}\label{eq-singleexceed}
\Upsilon_i(u_i)=\mathbb{P}\left(W_i-u_i>0\right), \quad i=1,2, \end{equation}
the \emph{probability that at least one of the workloads exceeds the barrier $u$}
\begin{equation}\label{eq-orexceed}
\Upsilon_\vee(u)=\mathbb{P}\left(\max_{i=1,2} (W_i-u_i)>0\right), \end{equation}
and the \emph{probability that both of the workloads exceed the barrier $u$} \begin{equation} \label{eq-andexceed}
\Upsilon_\wedge(u)=\mathbb{P}\left(\min_{i=1,2} (W_i-u_i)>0\right).\end{equation}

\subsection{The dual risk model}\label{S2b}
	
In the one-dimensional case it is well known that there exists a duality between risk- and queueing models, see e.g. \cite{asmussen}. The multivariate analogue shown in \cite{Badila2014} allows us to formulate the dual risk model to the above introduced random switching model as follows.

Let $N(t):=N_1(t)+N_2(t)$ such that $N(t)$ is a Poisson process with rate $\lambda=\lambda_1+\lambda_2$. Define the multivariate risk process 
\begin{equation} \label{eq-def-R}
\mathbf{R}(t) := \begin{pmatrix} R_1(t) \\ R_2(t) \end{pmatrix} := \sum_{k=1}^{N(t)} \bA_k \bB_k \begin{pmatrix}X_{1,k} \\ X_{2,k} \end{pmatrix}- t\begin{pmatrix} c_1 \\ c_2 \end{pmatrix}=: \sum_{k=1}^{N(t)} \bA_k \bB_k \bX_k - t \bc, \end{equation} 
where $\bB_k$ are i.i.d. random matrices, independent of all other randomness, such that 
\begin{equation*}
\mathbb{P}\left(\bB_k= \begin{pmatrix} 1 & 0 \\ 0 & 0 \end{pmatrix}\right)= \frac{\lambda_1}{\lambda} \qquad \text{ and }\qquad \mathbb{P}\left(\bB_k= \begin{pmatrix} 0 & 0 \\ 0 & 1 \end{pmatrix}\right)= \frac{\lambda_2}{\lambda} \quad \text{for all }k. \end{equation*}

Note that the components of $(\bR(t))_{t\geq 0}$ satisfy the net-profit condition, if 
\begin{equation}\label{eq-safetyloading}
c_i^*:= -\frac{1}{\lambda}\EE[R_i(1)]= \frac{1}{\lambda} \left( c_i -  \lambda_1\mathbb{E}[A_{i1}]\cdot \mathbb{E}[X_1]- \lambda_2 \mathbb{E}[A_{i2}] \cdot\mathbb{E}[X_2] \right)>0 \quad \text{for }i=1,2.
\end{equation}
We will therefore assume \eqref{eq-safetyloading} throughout the paper. Note that as mentioned in \cite{Badila2014}, \eqref{eq-safetyloading} implies existence of the stationary distribution of $\bW(t)$, i.e.  $\bW$ in \eqref{eq_def_stationary_dist} is well-defined. For a proof of this fact in the univariate setting, see e.g. \cite[Thm. 4.10]{Kyprianou2014}. 

For the buffer $u>0$, in the risk model, we define  the \emph{ruin probabilities of the single components}
\begin{equation}\label{eq-singleruin}
\Psi_i(u_i) := \PP(R_i(t) - u_i>0 \text{ for some }t>0), \quad i=1,2,
\end{equation}
the \emph{ruin probability for at least one component}
\begin{equation}\label{eq-orruin}
\Psi_{\vee}(u) := \mathbb{P}\left( \max_{i\in\{1,2\}} \left(R_i(t)-u_i\right)> 0 \text{ for some }t> 0\right),
\end{equation}
and the  \emph{ruin probability for all components}
\begin{equation}\label{eq-andruin}
\Psi_{\wedge}(u) := \mathbb{P}\left( \left(R_i(t_i)-u_i\right)> 0 \text{ for some }t_i> 0, i=1,2\right), \end{equation}
where as before $\bu=\bb u$ for $\mathbf{b}\in(0,1)^2$
with $b_1+b_2=1$.

The following Lemma allows us to gather information about the bivariate storage process in the switching model by performing calculations on our dual risk model.

\begin{lemma} \label{Corollary_Duality}
Consider the distributional limit of the workload process $\bW$ and the risk process $(\bR(t))_{t\geq 0}$ defined in \eqref{eq-def-R} and assume \eqref{eq-safetyloading}. Then the workload exceedance probabilities \eqref{eq-singleexceed}, \eqref{eq-orexceed}, and \eqref{eq-andexceed}, and the ruin probabilities \eqref{eq-singleruin}, \eqref{eq-orruin}, and \eqref{eq-andruin}, fulfil
\begin{align*}
\Upsilon_i(u_i) &= \Psi_i(u_i),\\
\Upsilon_\vee(u) &= \Psi_\vee(u), \\ 
\text{and} \quad \Upsilon_\wedge(u) &= \Psi_\wedge(u), \quad u>0.
\end{align*}
\end{lemma}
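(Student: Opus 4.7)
The strategy is to invoke the multivariate queueing--risk duality of \cite{Badila2014}, under which a bivariate coordinate-wise reflected storage process driven by a joint compound Poisson input has the same stationary law as the coordinate-wise all-time suprema of the free (unreflected) net input. Once this is applied to our setting, all three claimed identities follow from the resulting joint distributional equality.

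First, I would rewrite the workload equation \eqref{eq-def-W} as the coordinate-wise reflection at zero of a bivariate free net-input process $\bX(t) = \bA^{\mathrm{tot}}(t) - \bc t$, where the $i$-th component of $\bA^{\mathrm{tot}}(t)$ is the total load $\sum_{j=1}^2 \sum_{k=1}^{N_j(t)} (A_{ij})_k X_{j,k}$ arriving at server $i$ by time $t$. Next, by superposing $(N_1, N_2)$ into a single rate-$\lambda$ Poisson process $N$ and identifying the resulting type label of each arrival with the matrix $\bB_k$ in \eqref{eq-def-R}, a short calculation shows that $(\bR(t))_{t \geq 0}$ and $(\bX(t))_{t \geq 0}$ agree in law as processes: in both constructions the $k$-th jump equals $(\bA_k)_{\cdot j} X_{j,k}$ precisely when that arrival is of type $j$, with the matching probabilities $\lambda_j/\lambda$. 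Assumption \eqref{eq-safetyloading} then supplies the componentwise net-profit condition needed both for the stationarity of $\bW(t)$ and for the a.s.\ finiteness of $\sup_{t \geq 0} R_i(t)$, $i = 1,2$.

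With these identifications in place, the duality of \cite{Badila2014} yields the joint distributional identity
\begin{equation*}
(W_1, W_2) \eqdr \Bigl( \sup_{t \geq 0} R_1(t),\; \sup_{t \geq 0} R_2(t) \Bigr).
\end{equation*}
The three claimed equalities then follow by reading off marginal, union, and intersection events, respectively: $\Upsilon_i(u_i) = \PP(W_i > u_i) = \PP(\sup_t R_i(t) > u_i) = \Psi_i(u_i)$; applying $\PP$ to the union of $\{W_i > u_i\}$ over $i$ gives $\Upsilon_\vee(u) = \Psi_\vee(u)$; and applying it to the intersection gives $\Upsilon_\wedge(u) = \Psi_\wedge(u)$. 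Note that the latter is consistent with \eqref{eq-andruin} allowing distinct crossing times $t_1 \neq t_2$, since the joint event on the right is $\{\sup_{t_1 \geq 0} R_1(t_1) > u_1,\; \sup_{t_2 \geq 0} R_2(t_2) > u_2\}$.

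The main obstacle is the translation carried out in the second paragraph: one must carefully verify that the switch model really fits the framework of \cite{Badila2014} -- in particular that each server reflects its own coordinate independently at rate $c_i$ while the two coordinates share a common jump process mediated by the random matrices $\bA_k$. Once this alignment is in place, the duality applies essentially verbatim, and the three exceedance-probability identities are immediate consequences of the resulting joint distributional equality.
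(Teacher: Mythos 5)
Your proposal is correct and follows essentially the same route as the paper, which simply cites the multivariate duality of Badila et al.\ (their Lemma 1, extended to an infinite horizon via the PASTA property) to obtain the joint identity between the stationary workload vector and the coordinate-wise all-time suprema of the risk process. Your additional care in matching the superposed Poisson input with the $\bB_k$ construction and in reading off the marginal, union, and intersection events is exactly the detail the paper leaves implicit.
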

\begin{proof}
This follows directly from \cite[Lem. 1]{Badila2014} letting $N\to\infty$ and due to the so-called PASTA property, see \cite[Thm. 6.1]{asmussen}. 
\end{proof}

Note that in the ruin context it is common (see e.g.  \cite{Avram2009} or \cite{Behme2020}) to consider also the \emph{simultaneous ruin probability for all components}
\begin{equation}
\Psi_{\wedge,\text{sim}}(u) := \mathbb{P}\left( \min_{i\in\{1,2\}} \left(R_i(t)-u_i\right)> 0 \text{ for some }t> 0\right). \end{equation}
As we will see, results on $\Psi_{\wedge,\text{sim}}$ can sometimes be shown in analogy to those on $\Psi_\vee$ and we shall do so whenever it seems suitable. However, $\Psi_{\wedge,\text{sim}}$ has no counterpart in the switching model. 

It is clear from the above definitions that for all $\bu=\bb u\in (0,\infty)^2$
\begin{equation}\label{eq-includeexclude2} \Psi_{\wedge,\text{sim}}(u) \leq \Psi_\wedge(u)= \Psi_1(b_1u) + \Psi_2(b_2u) - \Psi_\vee(u),\end{equation}
and likewise
\begin{equation}\label{eq-includeexclude1} \Upsilon_\wedge(u) = \Upsilon_1(b_1 u) + \Upsilon_2(b_2 u) - \Upsilon_\vee(u).\end{equation}
We will therefore focus in our study on $\Upsilon_\vee$ and $\Psi_\vee$ and then derive the corresponding results for $\Upsilon_\wedge$ and $\Psi_\wedge$ via \eqref{eq-includeexclude1} and \eqref{eq-includeexclude2}. 

\subsection{Further notations}

To keep notation short, we write $\RR_{\geq 0}$, and $\RR_{\leq 0}$ for the positive/negative half line of the real numbers, respectively, and likewise use the notations $\RR_{>0}$, and $\RR_{<0}$ such that in particular $\RR_{<0}^2=(-\infty,0)\times(-\infty,0)$. Further $\overline{\RR}=\RR\cup\{-\infty,\infty\}$. For any set $M\subset\RR^q$ we write $\overline{M}$ for its closure, and $\partial M$ for its boundary, i.e. $\overline{M}=M\cup \partial M$.\\
We write $\sim$ for asymptotic equivalence at infinity, i.e. $f\sim g$ if and only if $\lim_{x\to\infty} \frac{f(x)}{g(x)}=1$, while $\nsim$ indicates that such a convergence does not hold. Moreover we use the standard Landau symbols, i.e. $f(x) = o(g(x))$ if and only if $f(x)/g(x) \to 0$ as $x\to \infty$.\\
Lastly, throughout the paper we set $\frac{1}{\infty}:=0$ and $\frac{1}{0}=:\infty$, which yields in particular $\overline{F}(\frac{x}{0}):=0$ for any tail function $\overline{F}$.

	%%%%%%%%%%%%%%%%%%%%%%%%%%%%%%%%%%%%%%%%%%%%%%%%%%%%%%%%%%
\section{The heavy-tailed case} \label{S3}
\setcounter{equation}{0}
	%%%%%%%%%%%%%%%%%%%%%%%%%%%%%%%%%%%%%%%%%%%%%%%%%%%%%%%%%%
	
In this section we will assume that the distribution of the arriving jobs is heavy-tailed. A very general class of heavy-tailed distributions is given by the subexponential distributions, and we will consider this case in Section \ref{S3SE} below. However, as we will see, the asymptotics we obtain in this case are not very explicit, in the sense that in a multivariate setting they do not allow for a direct statement about the speed of decay of the exceedance probabilities. We will therefore proceed and treat the more special case of regularly varying distributions in Section \ref{S3RV}, where speeds of decay can be derived more easily.
	
\subsection{The subexponential case}\label{S3SE}

Recall first that a random variable $X$ in $\RR_{>0}$ with distribution function $F$ is called \emph{subexponential} if 
$$\lim_{x\to \infty} \frac{\overline{F^{\ast 2}}(x)}{\overline{F}(x)} = 2,$$
where $F^{\ast 2}$ is the second convolution power of $F$, i.e. the distribution of $X'+ X''$, where $X'$ and $X''$ are i.i.d. copies of $X$. In this case, we write $F\in \cS$ or $X\in \cS$.

 As we are considering a multivariate setting in this paper, our proofs use a concept of multivariate subexponentiality. Several approaches for this exist and we shall rely here on the definition and results as given in \cite{Samorodnitsky2016}, which also provides a comprehensive  overview of previous notions of multivariate subexponentiality as given in \cite{Cline, Omey2006}.

\subsubsection{Results in the risk context}\label{S3SEa}

We start by presenting our main theorem in the subexponential setting, which we state in terms of the risk process defined in Section \ref{S2b}. Its proof relies on the theory developed in \cite{Samorodnitsky2016} and is given in Section \ref{S3cSE} below.
\begin{theorem} \label{Theorem_Subexponential_Risk_Or}
For all $u>0, v\geq 0$ set 
		\begin{equation}\label{eq_defguv} 
				g(u,v):=\frac{\lambda_1}{\lambda}\cdot \mathbb{E}\left[ \overline{F}_1\left(\min\left\{ \tfrac{u b_1 + vc_1^\ast}{A_{11}}, \tfrac{u b_2 + vc_2^\ast}{A_{21}}\right\}\right)\right]   +  \frac{\lambda_2}{\lambda}\cdot \mathbb{E}\left[\overline{F}_2\left(\min\left\{ \tfrac{u b_1 + vc_1^\ast}{A_{12}}, \tfrac{u b_2 + vc^\ast_2}{A_{22}}\right\}\right)\right], 
		\end{equation}	
		and assume that 
		\begin{equation} \label{eq_theta_result}
			\theta :=  \frac{\lambda_1}{\lambda}\cdot \mathbb{E}[X_1] \cdot \mathbb{E}\left[\min\left\{ \frac{A_{11}}{c_1^*}, \frac{A_{21}}{c_2^*}\right\}  \right] + \frac{\lambda_2}{\lambda}\cdot \mathbb{E}[X_2] \cdot \mathbb{E}\left[\min\left\{ \frac{A_{12}}{c_1^*}, \frac{A_{22}}{c_2^*}\right\}  \right]>0.
		\end{equation}
		Further, define a cdf by 
		\begin{equation}\label{eq_def_cdfF_M}
			F_{\text{subexp}}(u) := 1- \theta^{-1}\int_0^\infty g(u,v)  \diff v, \quad u\geq 0.
		\end{equation}
		and assume that $F_{\text{subexp}} \in\mathcal{S}$, then
		\begin{equation}
			\Psi_\vee(u) \sim \int_0^\infty g(u,v) \diff v, \quad \text{as }u\to\infty.
		\end{equation} 
	\end{theorem}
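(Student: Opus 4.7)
The plan is to recast $\Psi_\vee(u)$ as a ruin probability for the bivariate L\'evy risk process $\mathbf{R}$ with respect to a cone-like ruin set, and then apply the multivariate subexponential ruin asymptotic of Samorodnitsky and Sun \cite{Samorodnitsky2016}. Introducing the increasing, positively scalable ruin set $A = \{\mathbf{x}\in\RR^2 : x_1 > b_1 \text{ or } x_2 > b_2\}$, one immediately has $uA = \{\mathbf{x} : \max_{i}(x_i - ub_i) > 0\}$, so that $\Psi_\vee(u) = \mathbb{P}(\mathbf{R}(t) \in uA \text{ for some } t \geq 0)$. Since $A$ is disjoint from the drift direction $-\mathbf{c}$, this fits the geometric framework of \cite{Samorodnitsky2016}.

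Next, I would identify the single-jump tail relative to the shifted ruin set. A direct case analysis on the two possible values of $\mathbf{B}_1$ shows that, conditional on $\mathbf{B}_1$ selecting source $j$, the jump $\mathbf{A}_1\mathbf{B}_1\mathbf{X}_1 = (A_{1j}X_j, A_{2j}X_j)^\top$ lies in $uA + v\mathbf{c}^*$ if and only if $X_j$ exceeds $\min_{i}\{(ub_i + vc_i^*)/A_{ij}\}$; consequently
$$g(u,v) \;=\; \mathbb{P}\bigl(\mathbf{A}_1\mathbf{B}_1\mathbf{X}_1 \in uA + v\mathbf{c}^*\bigr).$$
Thus $g(u,v)$ is the probability that a single normalized jump overshoots the boundary of $uA$ after the process has drifted down by $v\mathbf{c}^*$, and $\int_0^\infty g(u,v)\,\diff v$ is the natural bivariate analogue of the classical integrated tail. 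The one-dimensional limit in which only one server is active recovers $g(u,v)=\overline{F}(u + vc^*)$ and hence the Cram\'er--Lundberg asymptotic $\tfrac{\lambda}{c-\lambda\mu}\int_u^\infty\overline{F}(y)\,\diff y$, providing a consistency check. The positive constant $\theta$ in \eqref{eq_theta_result} then plays the role of the normalization that makes $F_{\text{subexp}}$ in \eqref{eq_def_cdfF_M} a proper cumulative distribution function, and the assumption $F_{\text{subexp}}\in\mathcal{S}$ encodes precisely the $A$-subexponentiality hypothesis that \cite{Samorodnitsky2016} imposes on the L\'evy measure of $\mathbf{R}$.

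Finally, I would verify the remaining hypotheses of the relevant asymptotic in \cite{Samorodnitsky2016}: componentwise negative drift follows from the net-profit condition \eqref{eq-safetyloading}; finiteness of $\mathbb{E}[X_1],\mathbb{E}[X_2]$ controls $\theta$, and its postulated strict positivity rules out degenerate switching patterns in which the jumps fail to point into the interior of $A$; and the polar-like scaling of $A$ matches their requirements on ruin sets. Invoking their main theorem, one obtains directly
$$\Psi_\vee(u) \;\sim\; \int_0^\infty g(u,v)\,\diff v \qquad\text{as } u\to\infty.$$
I expect the main obstacle to lie not in the probabilistic content but in the careful translation between frameworks: aligning the explicit tail $g$ and the constant $\theta$ with the abstract tail measure and normalizing mean used in \cite{Samorodnitsky2016}, and confirming that one may apply the continuous-time (L\'evy) rather than the random-walk version of their result --- either by passing to the embedded random walk at Poisson jump times (using that $\mathbf{R}$ is piecewise linear between jumps, so componentwise suprema are attained at jump times) or by invoking the L\'evy extension of the theorem directly.
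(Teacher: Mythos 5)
Your proposal follows essentially the same route as the paper: your ruin set $A=\{\mathbf{x}:x_1>b_1 \text{ or } x_2>b_2\}$ is exactly the paper's $M=\mathbf{b}-L$ with $L=\RR^2\setminus\RR^2_{\geq 0}$, the identification $g(u,v)=\mathbb{P}(\mathbf{A}\mathbf{B}\mathbf{X}\in uM+v\mathbf{c}^*)$ and the role of $\theta$ as the normalizer of the integrated measure are precisely the content of the paper's Lemma 6.1, and the conclusion is then read off from \cite[Thm.~5.2]{Samorodnitsky2016} exactly as in the paper. The translation issues you flag at the end are handled in the paper simply by verifying that $L$ satisfies \cite[Assumption~5.1]{Samorodnitsky2016} and applying their theorem directly to the risk process.
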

The asymptotic behavior of the ruin probabilities for single components in the subexponential setting as presented in the next lemma can be shown by classic results. Again a proof is given in Section \ref{S3cSE}.
\begin{lemma}\label{lem-singleruinasymptotic_subexp}
Assume that
\begin{equation*}
F_I^{i}(x) :=  \frac{1}{\lambda_1\cdot \mathbb{E}[A_{i1}]\cdot \mathbb{E}[X_1] + \lambda_2 \cdot \mathbb{E}[A_{i2}]\mathbb{E}[X_2]} \cdot  \mathbb{E}\left[\lambda_1 \int_0^x \overline{F}_1(\tfrac{y}{A_{i1}}) \diff y  + \lambda_2 \int_0^x \overline{F}_2(\tfrac{y}{A_{i2}}) \diff y\right]
\end{equation*} is subexponential. 
Then the ruin probability for a single component \eqref{eq-singleruin} fulfills
\begin{align}\label{eq-singleruinasymp}
\Psi_i(u)
\sim \frac{1}{\lambda} \EE\left[ \int_0^\infty \left(\lambda_1 \overline{F}_1\left(\tfrac{u + v c_i^*}{A_{i1}} \right) + \lambda_2 \overline{F}_2\left(\tfrac{u +vc_i^*}{A_{i2}} \right)\right) \diff v \right],\quad \text{as }u\to\infty.
\end{align}
\end{lemma}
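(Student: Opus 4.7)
The plan is to observe that each component $R_i$ of the bivariate risk process \eqref{eq-def-R} is itself a classical univariate Cram\'er--Lundberg process, and then to invoke the subexponential ruin asymptotic \cite[Thm. X.2.1]{asmussenalbrecher} already recalled in the Introduction.

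First I would write
\begin{equation*}
R_i(t) = \sum_{k=1}^{N(t)} Y_{i,k} - c_i t, \qquad Y_{i,k}:=\bigl(\bA_k\bB_k\bX_k\bigr)_i,
\end{equation*}
where the $Y_{i,k}$ are i.i.d., independent of the Poisson process $N$ with rate $\lambda=\lambda_1+\lambda_2$. Since $\bB_k$ selects source $j$ with probability $\lambda_j/\lambda$, and the contribution of such a claim to server $i$ is then $A_{ij,k}X_{j,k}$, the common tail of the $Y_{i,k}$ is
\begin{equation*}
\overline{F}_{Y_i}(x) = \tfrac{\lambda_1}{\lambda}\,\EE\!\left[\overline{F}_1(x/A_{i1})\right] + \tfrac{\lambda_2}{\lambda}\,\EE\!\left[\overline{F}_2(x/A_{i2})\right],
\end{equation*}
with mean $\EE[Y_i]=\lambda^{-1}\bigl(\lambda_1\EE[A_{i1}]\EE[X_1]+\lambda_2\EE[A_{i2}]\EE[X_2]\bigr)$. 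In particular the net-profit condition \eqref{eq-safetyloading} becomes $c_i-\lambda\EE[Y_i]=\lambda c_i^*>0$.

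Next I would verify via Tonelli that the integrated-tail distribution $x\mapsto (\EE[Y_i])^{-1}\int_0^x \overline{F}_{Y_i}(y)\diff y$ coincides with the $F_I^i$ defined in the lemma. Under the assumption $F_I^i\in\mathcal{S}$, the classical subexponential ruin asymptotic \cite[Thm. X.2.1]{asmussenalbrecher}, applied to the one-dimensional process $R_i$, gives
\begin{equation*}
\Psi_i(u) \sim \frac{\lambda\EE[Y_i]}{c_i-\lambda\EE[Y_i]}\,\overline{F_I^i}(u) = \frac{\EE[Y_i]}{c_i^*}\,\overline{F_I^i}(u).
\end{equation*}
Finally, the substitution $y=u+vc_i^*$ in
\begin{equation*}
\overline{F_I^i}(u) = \frac{1}{\lambda\EE[Y_i]}\,\EE\!\left[\lambda_1\!\int_u^\infty\!\overline{F}_1(y/A_{i1})\diff y + \lambda_2\!\int_u^\infty\!\overline{F}_2(y/A_{i2})\diff y\right]
\end{equation*}
contributes a factor $c_i^*$ that cancels the $c_i^*$ in the denominator of the prefactor, and yields exactly the right-hand side of \eqref{eq-singleruinasymp}.

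I do not anticipate a real obstacle here; this is essentially a one-line reduction to a univariate classic. The only subtlety worth flagging is that the relevant claim arrival rate for component $i$ is $\lambda$ (\emph{not} $\lambda_i$), because through the random switch both sources generally feed both servers; correspondingly, the claim-size law of $R_i$ is the $(\lambda_j/\lambda)_{j=1,2}$-mixture of the laws of $A_{ij}X_j$, rather than the law of $A_{ii}X_i$ alone.
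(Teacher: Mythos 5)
Your proposal is correct and follows essentially the same route as the paper: reduce $R_i$ to a univariate Cram\'er--Lundberg process whose claim law is the $(\lambda_j/\lambda)$-mixture of the laws of $A_{ij}X_j$, identify its integrated tail with $F_I^i$ via Tonelli, apply \cite[Thm. X.2.1]{asmussenalbrecher}, and conclude with the substitution $y=u+vc_i^*$. The only detail the paper adds is a remark disposing of the degenerate case $\PP(A_{i1}+A_{i2}=0)=1$, which is anyway excluded by the assumption that $F_I^i$ is a (subexponential) distribution.
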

Lastly we consider the joint ruin probability $\Psi_\wedge$ in the following proposition.
\begin{proposition}\label{Proposition_Subexponential_Risk_And}
	Assume that $F_{\text{subexp}}$ as in \eqref{eq_def_cdfF_M}, and $F_I^1,F_I^2$ as in Lemma \ref{lem-singleruinasymptotic_subexp}, are in $\cS$. Recall $\bu=\bb u$ with $b_1+b_2=1$ and $\mathbf{b}=(b_1,b_2)^\top\in(0,1)^2$.  Then if  
	\begin{equation}\label{eq_notAsymptoticEquivalent}
		\Psi_1(b_1u)+\Psi_2(b_2u) \nsim \Psi_\vee(u),
	\end{equation}
	we obtain as $u\to\infty$
	\begin{align*}
	\Psi_{\wedge}(u)
	&\sim  \frac{\lambda_1}{\lambda} \EE\left[ \int_0^\infty  \overline{F}_1\left(\max\left\{ \tfrac{ub_1 + vc_1^*}{A_{11}}, \tfrac{ub_2 + vc_2^*}{A_{21}}\right\}\right)  \diff  v \right] \\
	&\qquad  + \frac{\lambda_2}{\lambda} \EE\left[ \int_0^\infty  \overline{F}_2\left(\max\left\{ \tfrac{ub_1 + vc_1^*}{A_{12}}, \tfrac{ub_2 + vc_2^*}{A_{22}}\right\}\right)  \diff  v \right].
	\end{align*}
	Conversely, if \eqref{eq_notAsymptoticEquivalent} fails, then with $g(u,v)$ as in \eqref{eq_defguv}
	 \begin{equation}  \label{eq-andvanishesSE}
		\Psi_{\wedge}(u) =o\left(\int_0^\infty g(u,v) dv \right), \quad \text{as }u\to\infty.
	\end{equation}
\end{proposition}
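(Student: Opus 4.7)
\textbf{Plan for the proof of Proposition \ref{Proposition_Subexponential_Risk_And}.}

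The natural starting point is the inclusion--exclusion identity \eqref{eq-includeexclude2}, which recasts $\Psi_\wedge(u)$ as the difference between the sum of the single--component ruin probabilities and $\Psi_\vee(u)$. Since Lemma \ref{lem-singleruinasymptotic_subexp} provides an asymptotic equivalent $A(u)$ for $\Psi_1(b_1 u)+\Psi_2(b_2 u)$ and Theorem \ref{Theorem_Subexponential_Risk_Or} provides $B(u):=\int_0^\infty g(u,v)\diff v$ for $\Psi_\vee(u)$, the entire statement is essentially an exercise of algebraically identifying $A(u)-B(u)$ and then arguing that the subtraction of asymptotics is legitimate. The algebraic key is the elementary identity
\begin{equation*}
\overline{F}(a)+\overline{F}(b)=\overline{F}(\min\{a,b\})+\overline{F}(\max\{a,b\}), \quad a,b\in\RR,
\end{equation*}
which holds since $\{\min\{a,b\},\max\{a,b\}\}=\{a,b\}$ as multisets.

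With this identity at hand, I would plug $u\mapsto b_i u$ into \eqref{eq-singleruinasymp} for $i=1,2$, add the two expressions, and regroup each of the $F_1$-- and $F_2$--summands so that, inside the expectation and inside the $v$--integral, the ``min'' part collapses exactly onto the integrand $g(u,v)$ of \eqref{eq_defguv}, while the ``max'' part yields the expression claimed on the right-hand side of the first assertion. Writing $C(u)$ for this ``max'' integral, one obtains the \emph{exact} identity $A(u)=B(u)+C(u)$, with $C(u)\ge 0$.

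Combining this with $\Psi_1(b_1 u)+\Psi_2(b_2 u)=A(u)(1+o(1))$ and $\Psi_\vee(u)=B(u)(1+o(1))$ yields
\begin{equation*}
\Psi_\wedge(u)=C(u)+o(A(u))+o(B(u))=C(u)+o(A(u)).
\end{equation*}
If \eqref{eq_notAsymptoticEquivalent} fails (i.e.\ $A\sim B$), then $C=o(B)$ and hence $\Psi_\wedge(u)=o(B(u))$, which is \eqref{eq-andvanishesSE}. If \eqref{eq_notAsymptoticEquivalent} holds (so that, using $A\ge B$, the ratio $A/B$ stays bounded away from $1$), then $A/C=1+B/C$ remains bounded, $o(A)=o(C)$, and therefore $\Psi_\wedge(u)\sim C(u)$, which is the first claimed asymptotic.

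\emph{Main obstacle.} The delicate step is the last one: strictly speaking, ``$A\nsim B$'' in the sense of the introduction only forbids $\lim A/B=1$; in principle $A/B$ could oscillate between $1$ and some larger value, in which case $A/C$ need not be bounded along the full sequence $u\to\infty$. Establishing that this pathology does not occur is the point that needs most care, and in the subexponential setting one has to invoke the regularity of the tails $\overline{F}_i$ (via the assumption $F_{\text{subexp}},F_I^1,F_I^2\in\cS$) to guarantee that $B/C$ is indeed bounded whenever $A\nsim B$. In the regularly varying specialisation of Section \ref{S3RV} this will follow automatically; here it is the hypothesis that makes the subtraction of asymptotics rigorous.
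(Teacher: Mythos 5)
Your proposal follows essentially the same route as the paper's proof: the inclusion--exclusion identity \eqref{eq-includeexclude2} combined with the pointwise identity $\overline{F}(a)+\overline{F}(b)-\overline{F}(\min\{a,b\})=\overline{F}(\max\{a,b\})$, which converts the difference of the two known asymptotics exactly into the claimed max-integral, and the observation that when \eqref{eq_notAsymptoticEquivalent} fails the ratio $\Psi_\wedge/\Psi_\vee$ tends to $0$ by \eqref{eq-includeexclude2}. The subtraction-of-asymptotics subtlety you flag as the main obstacle (that $A\nsim B$ alone does not force $B/C$ to stay bounded if $A/B$ oscillates down to $1$ along a subsequence) is genuine, but the paper's own proof does not address it either --- it simply combines the two equivalences under \eqref{eq_notAsymptoticEquivalent} and concludes --- so your write-up is, if anything, more explicit about the implicit regularity being used.
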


\subsubsection{Results in the switch context}\label{S3SEb}
With the help of Lemma \ref{Corollary_Duality} we may now directly summarize our findings from the last section to provide a rather explicit insight into the asymptotic behavior of the workload barrier exceedance probabilities in the switching model defined in Section \ref{S2a}.
\begin{corollary}[Asymptotics of the exceedance probabilities under subexponentiality] \label{Corollary_Subexp_Asymptotics}
Assume that $F_{\text{subexp}}$ as in \eqref{eq_def_cdfF_M}, and $F_I^1,F_I^2$ as in Lemma \ref{lem-singleruinasymptotic_subexp}, are in $\cS$. 
Define the resulting integrated tail functions for servers $i=1,2$ via
\begin{equation} \label{eq_definition_mixed_FI} \overline{F}_{I,i}(u,\bA):=
 \lambda_1  \int_0^\infty \overline{F}_1\left(\tfrac{u + v c_i^*}{A_{i1}} \right) \diff v + \lambda_2 \int_0^\infty \overline{F}_2\left(\tfrac{u +vc_i^*}{A_{i2}} \right) \diff v, \quad u>0.
\end{equation}
Then the workload exceedance probabilities  \eqref{eq-singleexceed}, \eqref{eq-orexceed}, and \eqref{eq-andexceed} fulfil
\begin{align*}% \label{eq_SingleAsymptotics}
\Upsilon_i(b_i u)
&\sim \frac{1}{\lambda}\cdot \EE\left[\overline{F}_{I,i}(b_i u,\bA)\right] , \quad i=1,2,\\
\label{eq_OrAsymptotics_Subexp}
\Upsilon_\vee(u) &\sim \frac{\lambda_1}{\lambda} \EE \left[   \int_0^\infty \overline{F}_1\left(\min\left\{ \tfrac{ub_1 + vc_1^*}{A_{11}}, \tfrac{ub_2 + vc_2^*}{A_{21}}\right\}\right)\diff v \right] \\
&\qquad  +  \frac{\lambda_2}{\lambda} \EE \left[  \int_0^\infty \overline{F}_2\left(\min\left\{ \tfrac{ub_1 + vc_1^*}{A_{12}}, \tfrac{ub_2 + vc_2^*}{A_{22}}\right\}\right) \diff v \right]
\end{align*}
and, assuming additionally that 
\begin{equation}\label{eq_notAsymptoticEquivalentSW}
\Upsilon_1(b_1u)+\Upsilon_2(b_2u) \nsim \Upsilon_\vee(u),
\end{equation}
we obtain 
 \begin{align*}
\Upsilon_\wedge(u) &\sim  \frac{\lambda_1}{\lambda} \EE\left[ \int_0^\infty  \overline{F}_1\left(\max\left\{ \tfrac{ub_1 + vc_1^*}{A_{11}}, \tfrac{ub_2 + vc_2^*}{A_{21}}\right\}\right)  \diff  v \right] \\ & \qquad + \frac{\lambda_2}{\lambda} \EE\left[ \int_0^\infty  \overline{F}_2\left(\max\left\{ \tfrac{ub_1 + vc_1^*}{A_{12}}, \tfrac{ub_2 + vc_2^*}{A_{22}}\right\}\right)  \diff  v \right].
\end{align*}
If \eqref{eq_notAsymptoticEquivalentSW} fails, then \begin{equation*}  \Upsilon_{\wedge}(u) =o\left( \mathbb{E}\left[\overline{F}_{I,1}(b_1 u,\bA)+\overline{F}_{I,2}(b_2 u,\bA)\right] \right). \end{equation*}
\end{corollary}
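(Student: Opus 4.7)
The strategy is simply to apply the duality identities from Lemma~\ref{Corollary_Duality}, namely $\Upsilon_i(u_i)=\Psi_i(u_i)$, $\Upsilon_\vee(u)=\Psi_\vee(u)$, and $\Upsilon_\wedge(u)=\Psi_\wedge(u)$, so that each of the three asymptotic claims in the corollary is obtained from the corresponding statement in Section~\ref{S3SEa}. The subexponentiality hypotheses $F_{\text{subexp}}\in\mathcal{S}$ and $F_I^1,F_I^2\in\mathcal{S}$ are assumed to hold, and these are exactly the hypotheses that power Theorem~\ref{Theorem_Subexponential_Risk_Or}, Lemma~\ref{lem-singleruinasymptotic_subexp}, and Proposition~\ref{Proposition_Subexponential_Risk_And}.

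First, for each $i\in\{1,2\}$ I would write $\Upsilon_i(b_i u)=\Psi_i(b_i u)$ and invoke Lemma~\ref{lem-singleruinasymptotic_subexp} applied to the buffer $b_i u$ in place of $u$. The right-hand side in \eqref{eq-singleruinasymp} then becomes $\tfrac{1}{\lambda}\,\EE\bigl[\lambda_1\int_0^\infty \overline{F}_1((b_i u+vc_i^*)/A_{i1})\,dv+\lambda_2\int_0^\infty \overline{F}_2((b_i u+vc_i^*)/A_{i2})\,dv\bigr]$, which is precisely $\tfrac{1}{\lambda}\,\EE[\overline{F}_{I,i}(b_i u,\bA)]$ by the definition \eqref{eq_definition_mixed_FI}. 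Here one should note that the expectation over $\bA$ commutes with the $dv$ integral by Fubini (all integrands are nonnegative), which is the only minor bookkeeping point.

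Second, for the $\vee$-asymptotics I apply Theorem~\ref{Theorem_Subexponential_Risk_Or} directly to $\Psi_\vee(u)=\Upsilon_\vee(u)$. The function $g(u,v)$ in \eqref{eq_defguv} is already written as a sum of two expectations, and integrating in $v$ and pulling the expectation outside (again Fubini) yields exactly the expression claimed for $\Upsilon_\vee(u)$. No rewriting beyond this is needed.

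Third, for the $\wedge$-asymptotics I appeal to Proposition~\ref{Proposition_Subexponential_Risk_And}. By the duality, hypothesis \eqref{eq_notAsymptoticEquivalentSW} is exactly \eqref{eq_notAsymptoticEquivalent}, so under it the first conclusion of the proposition transfers verbatim to $\Upsilon_\wedge(u)$. If \eqref{eq_notAsymptoticEquivalentSW} fails, then \eqref{eq-andvanishesSE} together with Theorem~\ref{Theorem_Subexponential_Risk_Or} gives $\Upsilon_\wedge(u)=o(\Upsilon_\vee(u))$; since $\Upsilon_\vee(u)\leq \Upsilon_1(b_1u)+\Upsilon_2(b_2u)\sim \tfrac{1}{\lambda}\EE[\overline{F}_{I,1}(b_1 u,\bA)+\overline{F}_{I,2}(b_2 u,\bA)]$ by the single-component asymptotics just proved, the stated $o(\cdot)$ bound follows. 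There is really no substantive obstacle here; the only thing to watch is the matching of the $g(u,v)$ representation with $\EE[\overline{F}_{I,1}(b_1u,\bA)+\overline{F}_{I,2}(b_2u,\bA)]$, which is immediate after interchanging the minimum with a sum bound (one uses $\min\leq \text{either argument}$ to bound $g(u,v)$ from above by the sum that gives the single-component integrands, up to the factor $\lambda^{-1}$).
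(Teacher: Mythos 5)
Your proposal is correct and matches the paper's own (one-line) proof, which likewise just combines Lemma~\ref{Corollary_Duality} with Theorem~\ref{Theorem_Subexponential_Risk_Or}, Lemma~\ref{lem-singleruinasymptotic_subexp}, and Proposition~\ref{Proposition_Subexponential_Risk_And}; your extra bookkeeping (Tonelli to move $\EE$ past $\int dv$, and $\overline F_j(\min\{a,b\})\le\overline F_j(a)+\overline F_j(b)$ to pass from $o(\int_0^\infty g(u,v)\,dv)$ to the stated $o$-bound) is exactly what is needed and is valid.
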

\begin{proof}
	This is clear from Lemma \ref{Corollary_Duality}, Theorem \ref{Theorem_Subexponential_Risk_Or}, Lemma \ref{lem-singleruinasymptotic_subexp}, and Proposition \ref{Proposition_Subexponential_Risk_And}.
\end{proof}

\subsection{The regularly varying case}\label{S3RV}

In this section we will restrict the class of considered heavy-tailed distributions and assume that the tail functions of the arriving jobs are regularly varying. As we will see, this restriction leads to a much more explicit description of the asymptotic behavior of ruin and exceedance probabilities. \\
Let $f\colon \mathbb{R}\to (0,\infty)$ be a measurable function and recall that $f$ is \emph{regularly varying (at infinity) with index $\alpha\geq 0$} if for all $\lambda>0$ it holds that 
\begin{equation*}
\lim_{t\to\infty} \frac{f(\lambda t)}{f(t)} =\lambda^{\alpha},
\end{equation*}
with the case $\alpha=0$ typically being referred to as \emph{slowly varying}. In this case we write $f\in\operatorname{RV}(\alpha)$. 
A real-valued random variable $X$ is called \emph{regularly varying with index $\alpha\geq0$}, i.e. $X\in \RV(\alpha)$, if its tail function $\overline{F}(\cdot)=\PP(X>\cdot)$ is regularly varying with index $-\alpha$. It is well-known that $\RV(\alpha)\subset \cS$ for all $\alpha\geq 0$, cf. \cite[Prop. X.1.4]{asmussenalbrecher}.

Further we follow \cite{HultLindskogTR} and call a random vector $\mathbf{Z}$ on $\mathbb{R}^q$ \emph{multivariate regularly varying} if there exists a non-null measure $\mu$ on $\overline{\mathbb{R}}^q\backslash\{\mathbf{0}\}$ such that
\begin{enumerate}[(i)]
	\item $\mu\left(\overline{\mathbb{R}}^q\backslash\mathbb{R}^q\right)=0$,
	\item $\mu(M)<\infty$ for all Borel sets $M$ bounded away from $\mathbf{0}$,
	\item for all Borel sets $M$ satisfying $\mu(\partial M)=0$ it holds that \begin{equation} \label{eq-vectorregvarying}
	\frac{\mathbb{P}(\mathbf{Z}\in tM)}{\mathbb{P}(\nrm{\mathbf{Z}}>t)} \to \mu(M). \end{equation}
\end{enumerate}
The norm $\|\cdot \|$ will typically be chosen to be the $L^1$-norm in this article.
If $\mathbf{Z}$ is multivariate regularly varying, necessarily there exists $\alpha>0$ satifsfying that for all $M$ as in \eqref{eq-vectorregvarying} and $t>0$ it holds that 
\begin{equation*}
\mu(tM) = t^{-\alpha}\mu(M). 
\end{equation*}
Thus we write $\mathbf{Z}\in \operatorname{MRV}(\alpha,\mu)$.\\ Note that in the one-dimensional case the above definitions coincide. We refer to \cite{Basrack2000} and \cite{resnick} for references of the above and more detailed information on multivariate regular variation. 

\subsubsection{Results in the risk context}\label{S3a}

We will now present our first main result in the regularly varying context. Note that this is not obtained by an application of our above results in the special case of regular variation, but instead we give an independent proof of Theorem \ref{Theorem_Asymptotics} in Section \ref{S3c} that relies on results from \cite{Hult2005}. This approach also allows to consider the simultaneous ruin probability, which had not been possible with the methods used in Section \ref{S3SE} due to stronger assumptions on the involved ruin sets.

\begin{theorem}\label{Theorem_Asymptotics}
	Assume the claim size variables $X_1,X_2$ are regularly varying, i.e. $X_1\in\operatorname{RV}(\alpha_1)$, and  $X_2 \in\operatorname{RV}(\alpha_2)$ for $\alpha_1,\alpha_2>1$.
	Then with $\bA$, $\bB$ from Section \ref{S2} and $\bX=(X_1,X_2)^\top$ it follows that there exists a measure $\mu^\ast$ such that 
	$$\bA\bB\bX \in \MRV(\min\{\alpha_1,\alpha_2\},\mu^\ast).$$
	Further
	\begin{equation} \label{eq_Asymptotics_1}
	\lim_{u\to\infty} \frac{\Psi_{\vee}(u)}{u\cdot\mathbb{P}(\nrm{\bA\bB\mathbf{X}}>u)} = \int_0^\infty \mu^*(v\mathbf{c}^*+\mathbf{b}+ L_{\vee})\diff v =: C_{\vee}<\infty,
	\end{equation} 
	and
	\begin{equation} \label{eq_Asymptotics_2}
	\lim_{u\to\infty} \frac{\Psi_{\wedge,\text{sim}}(u)}{u\cdot\mathbb{P}(\nrm{\bA\bB\mathbf{X}}>u)} = \int_0^\infty \mu^*(v\mathbf{c}^*+\mathbf{b}+ L_{\wedge,\text{sim}})\diff v =: C_{\wedge,\text{sim}}<\infty,
	\end{equation}
	with $\bc^\ast = (c_1^\ast,c_2^\ast)^\top$, $L_\vee = \RR^2\backslash \RR_{\leq 0}^2,$ and $L_{\wedge,\text{sim}} = \RR_{>0}^2$.
\end{theorem}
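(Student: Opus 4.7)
The plan is to prove the theorem in three steps. First, establish that $\bA\bB\bX$ is multivariate regularly varying and identify its limit measure $\mu^\ast$. Conditioning on $\bB$, one has $\bA\bB\bX = X_1(A_{11},A_{21})^\top$ with probability $\lambda_1/\lambda$ and $\bA\bB\bX = X_2(A_{12},A_{22})^\top$ with probability $\lambda_2/\lambda$. Each conditional distribution is the product of a one-dimensional regularly varying random variable with an independent bounded non-negative random vector, hence multivariate regularly varying by a Breiman-type argument (cf.\ \cite{Basrack2000}). Since the columns of $\bA$ sum to $1$, $\|\bA\bB\bX\|_1$ equals $X_1$ or $X_2$ respectively, so it lies in $\operatorname{RV}(-\min\{\alpha_1,\alpha_2\})$, and the limit measure $\mu^\ast$ is the corresponding weighted sum of the two induced measures, with the lighter-tailed component dropping out if $\alpha_1\neq \alpha_2$.

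Second, reformulate the continuous-time ruin events as events on an embedded random walk. Since $\bR$ is piecewise linear with slope $-\bc$ between Poisson jump times $T_k$ and has only upward jumps, one checks immediately that
\begin{equation*}
\Psi_L(u) = \PP\bigl(\bR(T_n)\in \bu + L \text{ for some }n\geq 1\bigr), \quad L\in\{L_\vee,L_{\wedge,\text{sim}}\}.
\end{equation*}
Writing $\tau_k = T_k - T_{k-1}\sim \operatorname{Exp}(\lambda)$ i.i.d.\ and $\mathbf{Z}_k := \bA_k\bB_k\bX_k - \tau_k\bc$, the embedded random walk $\mathbf{S}_n := \sum_{k=1}^n \mathbf{Z}_k = \bR(T_n)$ has mean drift $-\bc^\ast$ by \eqref{eq-safetyloading} and inherits the MRV property with the same limit measure $\mu^\ast$ from $\bA\bB\bX$, since the light-tailed summand $\tau_k\bc$ is asymptotically negligible.

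Third, apply the MRV random-walk ruin asymptotics of \cite{Hult2005} to each shifted ruin set $A = \bb + L$. The key structural hypotheses are that $A$ is bounded away from $\mathbf{0}$ and that $A + v\bc^\ast\subset A$ for all $v\geq 0$; both hold because $\bc^\ast$ has strictly positive entries and $L_\vee$, $L_{\wedge,\text{sim}}$ are closed under addition of vectors in $\RR_{>0}^2$. The theorem then yields
\begin{equation*}
\Psi_L(u) \sim u\,\PP(\|\mathbf{Z}\|_1>u)\int_0^\infty \mu^\ast(v\bc^\ast+\bb+L)\,\dd v,
\end{equation*}
and together with $\PP(\|\mathbf{Z}\|_1 > u) \sim \PP(\|\bA\bB\bX\|_1 > u)$ this delivers \eqref{eq_Asymptotics_1} and \eqref{eq_Asymptotics_2}. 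Finiteness of $C_\vee$ and $C_{\wedge,\text{sim}}$ follows from the $(-\min\{\alpha_1,\alpha_2\})$-homogeneity of $\mu^\ast$ combined with $\alpha_1,\alpha_2>1$, which makes the integrand decay like $v^{-\min\{\alpha_1,\alpha_2\}}$ at infinity.

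The main obstacle I foresee is verifying the $\mu^\ast$-continuity of the boundaries of $\bb+L_\vee$ and $\bb+L_{\wedge,\text{sim}}$ required to apply \eqref{eq-vectorregvarying} on these sets. Because $\mu^\ast$ is supported on the two random rays $\{s\,(A_{11},A_{21})^\top:s>0\}$ and $\{s\,(A_{12},A_{22})^\top:s>0\}$, one must argue that these rays almost surely avoid the boundaries $\partial(\bb+L_\vee)$ and $\partial(\bb+L_{\wedge,\text{sim}})$, which is a mild geometric condition essentially built into the strict inequalities defining the ruin regions. A secondary technical point is the reduction to the embedded walk for $\Psi_{\wedge,\text{sim}}$, which uses that the jumps of $\bR$ point upward so that any first entrance into $\bu + \RR_{>0}^2$ must occur at some Poisson epoch $T_k$.
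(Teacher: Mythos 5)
Your proposal is correct and follows essentially the same route as the paper: multivariate regular variation of $\bA\bB\bX$ via a Breiman-type mixing lemma, reduction of the continuous-time ruin events to the embedded random walk at the Poisson epochs (possible because the process can only enter the shifted ruin sets by a jump), and then the random-walk hitting asymptotics of Hult, Lindskog, Mikosch and Samorodnitsky applied to $\bb+L_\vee$ and $\bb+L_{\wedge,\text{sim}}$. The one technical point you flag, $\mu^\ast$-nullity of $\partial(\bb+L)$, is settled in the paper not by showing the rays avoid the boundary (they need not) but by a scaling argument: the dilates $tM$ of each boundary half-line are pairwise disjoint and bounded away from $\mathbf{0}$, so homogeneity of $\mu^\ast$ forces $\mu^\ast(M)=0$.
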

 
 Note that by conditioning on $\bB$ we have
 \begin{align}
 \mathbb{P}(\nrm{\bA\bB\mathbf{X}}>u) &= \frac{\lambda_1}{\lambda}\cdot  \mathbb{P}\left(\nrm{\begin{pmatrix} A_{11}X_1 \\ A_{21}X_1\end{pmatrix}}>u\right) +\frac{\lambda_2}{\lambda} \cdot \mathbb{P}\left(\nrm{\begin{pmatrix} A_{12}X_2 \\ A_{22}X_2\end{pmatrix}}>u\right) \nonumber \\
 &= \lambda^{-1} \left(\lambda_1 \overline{F}_1(u) + \lambda_2\overline{F}_2(u)\right). \label{eq-Nenner}
 \end{align}
 
 Using the limiting-measure property of $\mu^\ast$ it is further possible to explicitely compute the constants $C_\vee$, and $C_{\wedge,\text{sim}}$ in Theorem \ref{Theorem_Asymptotics} above. This then yields the following proposition whose proof is also postponed to Section \ref{S3c}.

\begin{proposition}\label{Cor-AsymptoticsRuin}
	Assume $X_1\in\operatorname{RV}(\alpha_1)$, and  $X_2 \in\operatorname{RV}(\alpha_2)$ for $\alpha_1,\alpha_2>1$ and set 
	\begin{equation*}
	\zeta:= \lim_{t\to\infty} \frac{\lambda_1 \overline{F}_1(t)}{\lambda_2 \overline{F}_2(t)} \in[0,\infty], 
	\end{equation*}
	such that clearly $\zeta\in(0,\infty)$ implies $\alpha_1=\alpha_2$. Then 
	\begin{align} \label{eq-orasymptotic}
	\lefteqn{  \Psi_{\vee}(u) \sim \frac{C_\vee}{\lambda} \cdot u \left(\lambda_1\overline{F}_1(u) + \lambda_2 \overline{F}_2(u)\right)  \qquad \text{with}}\\
		C_\vee&:=  \mathbb{E}\left[\int_0^\infty  \frac{\zeta\cdot \left(\min\left\{\frac{vc_1^*+b_1}{A_{11}},\frac{vc_2^*+b_2}{A_{21}}\right\}\right)^{-\alpha_1} +\left(\min\left\{\frac{vc_1^*+b_1}{A_{12}},\frac{vc_2^*+b_2}{A_{22}}\right\}\right)^{-\alpha_2}}{1+\zeta} \diff v\right], \label{eq_defcvee} 
	\end{align}
	and
	\begin{align} \label{eq-andsimasymptotics}
	\lefteqn{ \Psi_{\wedge,\text{sim}}(u) \sim \frac{C_{\wedge,\text{sim}}}{\lambda}\cdot u \left(\lambda_1\overline{F}_1(u) + \lambda_2 \overline{F}_2(u)\right) \qquad \text{with}} \\
	C_{\wedge,sim}&:= \mathbb{E}\left[\int_0^\infty  \frac{\zeta \cdot \left(\max\left\{\frac{vc_1^*+b_1}{A_{11}},\frac{vc_2^*+b_2}{A_{21}}\right\}\right)^{-\alpha_1}+\left(\max\left\{\frac{vc_1^*+b_1}{A_{12}},\frac{vc_2^*+b_2}{A_{22}}\right\}\right)^{-\alpha_2}}{1+\zeta}\diff v\right], \nonumber
	\end{align}
	where we interpret $\frac{\infty\cdot x}{\infty}:=x$.
\end{proposition}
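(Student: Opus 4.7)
The plan is to combine Theorem~\ref{Theorem_Asymptotics} with an explicit closed-form evaluation of the limit measure $\mu^*$ on the translated sets $A_\vee(v):=v\mathbf{c}^*+\mathbf{b}+L_\vee$ and $A_{\wedge,\text{sim}}(v):=v\mathbf{c}^*+\mathbf{b}+L_{\wedge,\text{sim}}$. By \eqref{eq-Nenner}, $\PP(\|\bA\bB\bX\|>u)=\lambda^{-1}(\lambda_1\overline{F}_1(u)+\lambda_2\overline{F}_2(u))$, so plugging this into Theorem~\ref{Theorem_Asymptotics} reduces the claim to identifying $C_\vee$ and $C_{\wedge,\text{sim}}$ with the displayed expressions. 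Since $v\mathbf{c}^*+\mathbf{b}$ has strictly positive entries, the sets $A_\vee(v)$ and $A_{\wedge,\text{sim}}(v)$ are bounded away from the origin for every $v\geq 0$, so the values $\mu^*(A_\vee(v))$ and $\mu^*(A_{\wedge,\text{sim}}(v))$ are well-defined and finite.

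Next, I would compute $\mu^*$ on generic test sets $M_\vee(\eta):=\{x\in\RR^2:x_1>\eta_1\text{ or }x_2>\eta_2\}$ (and analogously $M_{\wedge,\text{sim}}(\eta)$ with ``or'' replaced by ``and''). Conditioning first on $\bB$ decomposes $\bA\bB\bX$ as a mixture, and a further conditioning on $\bA$ gives
\begin{equation*}
\PP(\bA\bB\bX\in tM_\vee(\eta)) = \tfrac{\lambda_1}{\lambda}\EE\!\left[\overline{F}_1(t\,Y_1(\eta))\right]+\tfrac{\lambda_2}{\lambda}\EE\!\left[\overline{F}_2(t\,Y_2(\eta))\right],
\end{equation*}
where $Y_j(\eta):=\min\{\eta_1/A_{1j},\eta_2/A_{2j}\}$; the set $M_{\wedge,\text{sim}}(\eta)$ produces the same expression with $\min$ replaced by $\max$. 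Because $A_{1j}+A_{2j}=1$, each $Y_j(\eta)$ lies a.s.\ in a compact interval bounded away from $0$ (with the convention $\eta/0=\infty$ handling the case $A_{ij}=0$ via $\overline{F}_j(\infty)=0$).

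Since $X_j\in\RV(\alpha_j)$, Potter's bounds together with the a.s.\ boundedness of $Y_j(\eta)$ yield, by dominated convergence,
\begin{equation*}
\frac{\EE[\overline{F}_j(t\,Y_j(\eta))]}{\overline{F}_j(t)}\longrightarrow \EE[Y_j(\eta)^{-\alpha_j}],\qquad t\to\infty.
\end{equation*}
Dividing the previous display by $\PP(\|\bA\bB\bX\|>t)=\lambda^{-1}(\lambda_1\overline{F}_1(t)+\lambda_2\overline{F}_2(t))$ and taking $t\to\infty$, the weights $\lambda_j\overline{F}_j(t)/[\lambda_1\overline{F}_1(t)+\lambda_2\overline{F}_2(t)]$ converge to $\zeta/(1+\zeta)$ and $1/(1+\zeta)$ respectively (with the stated convention $\infty\cdot x/\infty:=x$ in the boundary regimes $\zeta\in\{0,\infty\}$, i.e.\ $\alpha_1\neq\alpha_2$). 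This yields
\begin{equation*}
\mu^*(M_\vee(\eta))=\frac{\zeta\,\EE[Y_1(\eta)^{-\alpha_1}]+\EE[Y_2(\eta)^{-\alpha_2}]}{1+\zeta},
\end{equation*}
and the analogous formula for $M_{\wedge,\text{sim}}(\eta)$ with $\max$ in place of $\min$.

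Finally, I would specialise to $\eta_i=vc_i^*+b_i$, so that $M_\vee(\eta)=A_\vee(v)$ and $M_{\wedge,\text{sim}}(\eta)=A_{\wedge,\text{sim}}(v)$, and apply Fubini's theorem to interchange the $v$-integration with the expectation over $\bA$. This produces precisely the formulas for $C_\vee$ and $C_{\wedge,\text{sim}}$ stated in the proposition. The main technical obstacle I expect is the clean justification of the Breiman-type step and the limit in the weights: both hinge on the normalisation $A_{1j}+A_{2j}=1$ (which keeps $Y_j(\eta)$ in a deterministic compact positive interval, ensuring $\EE[Y_j(\eta)^{-\alpha_j}]<\infty$) and on the hypothesis $\alpha_1,\alpha_2>1$ (which, via the scaling $Y_j(vc^*+b)\asymp v$ for large $v$, provides the $v^{-\alpha_j}$ integrability needed for both Fubini and the finiteness of $C_\vee$, $C_{\wedge,\text{sim}}$ already guaranteed by Theorem~\ref{Theorem_Asymptotics}).
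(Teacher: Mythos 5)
Your proposal is correct and follows essentially the same route as the paper's proof: invoke Theorem~\ref{Theorem_Asymptotics} together with \eqref{eq-Nenner}, evaluate $\mu^*(v\mathbf{c}^*+\mathbf{b}+L)$ by conditioning on $\bB$ and then on $\bA$, justify the interchange of the $t$-limit with the expectation over $\bA$ via the uniform lower bound $Y_j(\eta)\geq\min\{\eta_1,\eta_2\}>0$ coming from $A_{ij}\in[0,1]$, identify the weights via the regular-variation limit (the paper's Lemma~\ref{lem-regvarlimits}), and finish with Tonelli. Your factorisation into a Breiman-type limit per claim type times the weight limit is only a cosmetic reorganisation of the paper's combined dominated-convergence argument.
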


We continue our study of the asymptotics of the risk model by determining the asymptotic behavior of $\Psi_\wedge$. 
It is clear from Equations \eqref{eq-includeexclude2} and \eqref{eq-orasymptotic} that in order to do this, we first have to determine the asymptotic behavior of the ruin probabilities for single components \eqref{eq-singleruin}, 
which will be given by the following lemma.

\begin{lemma}\label{lem-singleruinasymptotic}
	Assume $X_1\in\operatorname{RV}(\alpha_1)$, and  $X_2 \in\operatorname{RV}(\alpha_2)$ for $\alpha_1,\alpha_2>1$. 
	Then the ruin probability for a single component \eqref{eq-singleruin} fulfills \eqref{eq-singleruinasymp}.
\end{lemma}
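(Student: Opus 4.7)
The plan is to reduce the claim to the classical one-dimensional Cramér--Lundberg ruin asymptotic applied componentwise, and then to verify the required subexponentiality under the regular-variation hypotheses.

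First, I would observe that each component $R_i$ of the bivariate risk process \eqref{eq-def-R} is itself a standard univariate Cramér--Lundberg process. Conditioning on the matrices $\bB_k$, the jumps of $R_i$ form an i.i.d.~sequence distributed as the mixture $Y_i$ with law $\frac{\lambda_1}{\lambda}F_{A_{i1}X_1}+\frac{\lambda_2}{\lambda}F_{A_{i2}X_2}$, arriving according to a Poisson process of rate $\lambda=\lambda_1+\lambda_2$ and offset by the linear drift $c_i t$. The net-profit inequality $c_i-\lambda\EE[Y_i]=\lambda c_i^{*}>0$ is exactly \eqref{eq-safetyloading}, while the integrated tail of $Y_i$ coincides with the distribution $F_I^i$ appearing in Lemma \ref{lem-singleruinasymptotic_subexp}.

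Next, I would verify that $F_I^i\in\cS$ under the assumption $X_j\in\RV(\alpha_j)$ with $\alpha_j>1$. Since $A_{ij}\in[0,1]$ is bounded (hence has moments of every order) and is independent of $X_j$, Breiman's theorem yields $\overline{F}_{A_{ij}X_j}(y)\sim \EE[A_{ij}^{\alpha_j}]\,\overline{F}_j(y)$ whenever $A_{ij}$ is not a.s.~zero, so $\overline{F}_{A_{ij}X_j}\in\RV(-\alpha_j)$. A positive linear combination of regularly varying tails is regularly varying, so $\overline{F}_{Y_i}\in\RV(-\min\{\alpha_1,\alpha_2\})$, and Karamata's theorem gives $\overline{F_I^i}\in\RV(-(\min\{\alpha_1,\alpha_2\}-1))$. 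As $\RV\subset\cS$, the integrated tail is subexponential.

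Finally, I would invoke Lemma \ref{lem-singleruinasymptotic_subexp} (equivalently, the Embrechts--Veraverbeke / Pakes theorem, cf.~\cite[Thm.~X.2.1]{asmussenalbrecher}) applied to the univariate process $R_i$ to conclude $\Psi_i(u)\sim c_i^{*\,-1}\int_u^\infty\overline{F}_{Y_i}(y)\,dy$, and then the substitution $y=u+v c_i^{*}$ together with Fubini rewrites the right-hand side in the form \eqref{eq-singleruinasymp}. The main point to take care of is the possibility of degenerate entries ($A_{ij}\equiv 0$) or unequal indices $\alpha_1\neq \alpha_2$: in either situation the mixture $\overline{F}_{Y_i}$ simply reduces to fewer regularly varying summands, and the argument above is unaffected, but these boundary cases should be acknowledged explicitly.
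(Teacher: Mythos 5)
Your proof is correct and follows essentially the same route as the paper: reduce to Lemma \ref{lem-singleruinasymptotic_subexp} by checking that the integrated tail $F_I^i$ of the mixture $Y_i$ is subexponential, which you establish via regular variation of $\overline{F}_{Y_i}$ and Karamata's theorem, and then rewrite the Embrechts--Veraverbeke asymptotic via the substitution $y=u+vc_i^\ast$. The only cosmetic difference is that you invoke Breiman's theorem to get $A_{ij}X_j\in\RV(\alpha_j)$, whereas the paper reuses its Proposition \ref{Proposition_regular_mixing}; both are standard and equivalent here, and you correctly flag the degenerate case $A_{ij}\equiv 0$ that the paper also treats separately.
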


With this the following proposition is straightforward. Again, the proof can be found in Section \ref{S3c}.

\begin{proposition}\label{prop-andasympruin}
Assume $X_1\in\operatorname{RV}(\alpha_1)$, and  $X_2 \in\operatorname{RV}(\alpha_2)$ for $\alpha_1,\alpha_2>1$.
 Recall $\bu=\bb u$ with $b_1+b_2=1$ and $\mathbf{b}=(b_1,b_2)^\top\in(0,1)^2$.
Then if \eqref{eq_notAsymptoticEquivalent} holds,  
\begin{align}\label{eq-andasymptotic}
\Psi_{\wedge}(u)
&\sim \frac{1}{\lambda}\cdot\left(\lambda_1 \left( \EE\left[\overline{F}_{1,I}(u,\bA)  \right] - C_\vee u \overline{F}_1(u) \right)  
+ \lambda_2 \left( \EE\left[ \overline{F}_{2,I}(u,\bA)\right]  - C_\vee u \overline{F}_2(u) \right)\right),
\end{align}
with $C_\vee$ as defined in \eqref{eq_defcvee} and with the weighted integrated tail functions
\begin{align*}
\overline{F}_{j,I}(u,\bA) &:=  \int_{0}^\infty \overline{F}_j \left(\frac{u b_1 + v c_1^\ast }{A_{1j}}\right) \diff v+ \int_{0}^\infty \overline{F}_j \left(\frac{u b_2+ vc_2^\ast}{A_{2j}}\right) \diff v, \quad u>0, j=1,2.
\end{align*}
 Otherwise, if \eqref{eq_notAsymptoticEquivalent} fails, then
\begin{equation}
\label{eq-andvanishes}
\Psi_\wedge(u) = \mathit{o}\left(u\cdot\left( \overline{F}_1(u) + \overline{F}_2(u)\right)\right).
\end{equation}
\end{proposition}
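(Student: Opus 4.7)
The plan is to reduce the proposition to the asymptotics already in hand for $\Psi_i$ and $\Psi_\vee$ via the deterministic inclusion-exclusion identity \eqref{eq-includeexclude2},
$$\Psi_\wedge(u) = \Psi_1(b_1 u) + \Psi_2(b_2 u) - \Psi_\vee(u).$$
First I would substitute $u \mapsto b_i u$ into the single-component asymptotics \eqref{eq-singleruinasymp} of Lemma \ref{lem-singleruinasymptotic} to obtain $\Psi_i(b_i u) \sim \lambda^{-1}\EE[\overline{F}_{I,i}(b_i u, \bA)]$ for $i=1,2$, with $\overline{F}_{I,i}$ as in \eqref{eq_definition_mixed_FI}.

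The second step is purely a bookkeeping exercise. Writing out $\overline{F}_{I,1}(b_1 u, \bA) + \overline{F}_{I,2}(b_2 u, \bA)$ as its four defining integrals and regrouping the terms by the index $j\in\{1,2\}$ of the tail function $\overline F_j$ (rather than by the server index $i$), one recognises exactly $\lambda_1 \overline{F}_{1,I}(u,\bA) + \lambda_2 \overline{F}_{2,I}(u,\bA)$ for the weighted integrated tails defined in the statement. Combining this identification with the $\Psi_\vee$-asymptotics of Proposition \ref{Cor-AsymptoticsRuin} and inserting both into the inclusion-exclusion identity yields
$$\Psi_\wedge(u) = A(u)(1+o(1)) - B(u)(1+o(1)),$$
where $A(u) := \lambda^{-1}\EE[\lambda_1\overline{F}_{1,I}(u,\bA) + \lambda_2\overline{F}_{2,I}(u,\bA)]$ and $B(u) := \lambda^{-1} C_\vee\, u\,(\lambda_1 \overline{F}_1(u) + \lambda_2 \overline{F}_2(u))$, and the difference $A(u)-B(u)$ is exactly the right-hand side of \eqref{eq-andasymptotic}.

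The delicate point, and the only real obstacle, is turning this additive expansion into an asymptotic equivalence. The union bound $\Psi_\vee(u)\le \Psi_1(b_1u) + \Psi_2(b_2u)$ gives $A(u)/B(u)\ge 1+o(1)$; using Karamata's theorem to evaluate the $v$-integrals in $\overline{F}_{j,I}(u,\bA)$ one sees that $\EE[\overline{F}_{j,I}(u,\bA)]$ is itself of the order $u\overline F_j(u)$, so $A(u)/B(u)$ admits a limit in $[1,\infty]$ governed by the ratio $\lambda_1\overline F_1(u)/(\lambda_2\overline F_2(u))$. Under \eqref{eq_notAsymptoticEquivalent} this limit must be strictly greater than $1$, so $A(u)-B(u)$ is of the same asymptotic order as $A(u)$, the $o(A(u))$ and $o(B(u))$ error terms are absorbed, and \eqref{eq-andasymptotic} follows. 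If instead \eqref{eq_notAsymptoticEquivalent} fails, then $A(u)\sim B(u)$, the leading orders cancel up to a $o(B(u))$ remainder, and $\Psi_\wedge(u) = o(B(u)) = o\bigl(u(\overline{F}_1(u)+\overline{F}_2(u))\bigr)$, which is exactly \eqref{eq-andvanishes}.
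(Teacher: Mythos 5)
Your proposal is correct and follows essentially the same route as the paper: the inclusion--exclusion identity \eqref{eq-includeexclude2} combined with Lemma \ref{lem-singleruinasymptotic} and Proposition \ref{Cor-AsymptoticsRuin}, with condition \eqref{eq_notAsymptoticEquivalent} guaranteeing that the leading terms do not cancel (and, when it fails, giving $\Psi_\wedge=o(\Psi_\vee)$ exactly as in the proof of Proposition \ref{Proposition_Subexponential_Risk_And}). Your explicit argument that the limit of $A(u)/B(u)$ exists and must exceed $1$ under \eqref{eq_notAsymptoticEquivalent} is in fact a slightly more careful justification of the step the paper dispatches with the remark that ``terms with the same index of regular variation do not cancel out asymptotically.''
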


\subsubsection{Results in the switch context}\label{S3b}

Again we may now summarize our findings in the context of the switching model defined in Section \ref{S2a} as follows.

\begin{corollary}[Asymptotics of the exceedance probabilities for regularly varying jobs] \label{Theorem_RV_Asymptotics}
Assume the workload variables $X_1,X_2$ are regularly varying, i.e. $X_1\in\operatorname{RV}(\alpha_1)$, and  $X_2 \in\operatorname{RV}(\alpha_2)$ for $\alpha_1,\alpha_2>1$. 
Set 
\begin{equation*}
\zeta:= \lim_{t\to\infty} \frac{\lambda_1 \overline{F}_1(t)}{\lambda_2 \overline{F}_2(t)} \in[0,\infty], 
\end{equation*}
such that $\zeta\in(0,\infty)$ implies $\alpha_1=\alpha_2$. Recall $C_\vee$ from \eqref{eq_defcvee} and the integrated tail functions for servers $i=1,2$ from \eqref{eq_definition_mixed_FI}.
Then the workload exceedance probabilities  \eqref{eq-singleexceed} and \eqref{eq-orexceed} fulfil
\begin{align*}
\Upsilon_i(b_i u)
&\sim \frac{1}{\lambda}\cdot \EE\left[\overline{F}_{I,i}(b_i u,\bA)\right] , \quad i=1,2,\\
\label{eq_OrAsymptotics}
\Upsilon_{\vee}(u)& \sim \frac{C_\vee}{\lambda}\cdot u (\lambda_1\overline{F}_1(u) + \lambda_2 \overline{F}_2(u)),
\end{align*}
Assuming additionally \eqref{eq_notAsymptoticEquivalentSW}
the workload exceedance probability \eqref{eq-andexceed} fulfills
 \begin{align*}
 \Upsilon_{\wedge}(u)
&\sim \frac{1}{\lambda}\cdot \left(\EE\left[ \overline{F}_{I,1}(b_1 u,\bA) \right] +  \EE\left[\overline{F}_{I,2}(b_2 u,\bA) \right]- \left(\lambda_1  u\overline{F}_1(u) +  \lambda_2 u\overline{F}_2(u)\right)C_\vee\right).
\end{align*}
If \eqref{eq_notAsymptoticEquivalentSW} fails, then \begin{equation*} %\label{eq_AndVanishes} 
\Upsilon_{\wedge}(u) =o\left(u\cdot\left(\overline{F_1}(u) + \overline{F_2}(u)\right) \right). \end{equation*}
\end{corollary}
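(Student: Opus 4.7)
The plan is to simply invoke the duality established in Lemma \ref{Corollary_Duality}, which identifies each workload exceedance probability $\Upsilon_\bullet$ with the corresponding ruin probability $\Psi_\bullet$, and then transport the risk-side asymptotics already proved in Lemma \ref{lem-singleruinasymptotic}, Proposition \ref{Cor-AsymptoticsRuin}, and Proposition \ref{prop-andasympruin} to the switch side. Since all the analytical work has been carried out in the risk context, this corollary is essentially a bookkeeping exercise, and I would expect the ``proof'' to consist only of citations plus a few identifications of notation.

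First, I would treat the single-server assertion. By Lemma \ref{Corollary_Duality} one has $\Upsilon_i(b_i u)=\Psi_i(b_i u)$, and Lemma \ref{lem-singleruinasymptotic} states that $\Psi_i(b_i u)$ is asymptotically equivalent to
$$\frac{1}{\lambda}\,\EE\left[\lambda_1\int_0^\infty \overline{F}_1\!\left(\tfrac{b_i u+vc_i^*}{A_{i1}}\right)\!\diff v+\lambda_2\int_0^\infty \overline{F}_2\!\left(\tfrac{b_i u+vc_i^*}{A_{i2}}\right)\!\diff v\right],$$
which by definition \eqref{eq_definition_mixed_FI} is exactly $\lambda^{-1}\EE\!\left[\overline{F}_{I,i}(b_i u,\bA)\right]$. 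For $\Upsilon_\vee$, the duality again gives $\Upsilon_\vee(u)=\Psi_\vee(u)$, and the asymptotic equivalence \eqref{eq-orasymptotic} in Proposition \ref{Cor-AsymptoticsRuin} directly yields the claim.

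For $\Upsilon_\wedge$ the cleanest route is via the switch-side inclusion-exclusion identity \eqref{eq-includeexclude1},
$$\Upsilon_\wedge(u)=\Upsilon_1(b_1 u)+\Upsilon_2(b_2 u)-\Upsilon_\vee(u),$$
into which one substitutes the three asymptotics just established. Here the hypothesis \eqref{eq_notAsymptoticEquivalentSW} is crucial, for it guarantees (via the duality applied to \eqref{eq_notAsymptoticEquivalent}) that the leading-order contributions of the three terms do not cancel, so that the resulting difference genuinely dictates the asymptotic behavior of $\Upsilon_\wedge$. If \eqref{eq_notAsymptoticEquivalentSW} fails, duality reduces the statement to the second part of Proposition \ref{prop-andasympruin}, yielding $\Upsilon_\wedge(u)=o(u(\overline{F}_1(u)+\overline{F}_2(u)))$.

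No serious obstacle is anticipated. The only subtlety worth flagging is that Proposition \ref{prop-andasympruin} uses integrated tail functions $\overline{F}_{j,I}$ indexed by job type, whereas Corollary \ref{Theorem_RV_Asymptotics} is stated in terms of $\overline{F}_{I,i}$ indexed by server; routing the proof of the $\Upsilon_\wedge$ asymptotics through \eqref{eq-includeexclude1} rather than applying Proposition \ref{prop-andasympruin} verbatim avoids having to translate between the two indexings.
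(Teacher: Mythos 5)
Your proposal is correct and matches the paper's proof, which is exactly the one-line citation of Lemma \ref{Corollary_Duality}, Lemma \ref{lem-singleruinasymptotic}, and Propositions \ref{Cor-AsymptoticsRuin} and \ref{prop-andasympruin}. Your remark about regrouping the job-indexed sums $\lambda_1\overline{F}_{1,I}+\lambda_2\overline{F}_{2,I}$ into the server-indexed $\overline{F}_{I,1}(b_1u,\bA)+\overline{F}_{I,2}(b_2u,\bA)$ is the only bookkeeping step involved, and you handle it correctly.
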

\begin{proof}
	This is clear from Lemma \ref{Corollary_Duality}, Lemma \ref{lem-singleruinasymptotic}, and Propositions \ref{Cor-AsymptoticsRuin} and \ref{prop-andasympruin}.
\end{proof}
\begin{remark}
		At first sight the structure of the asymptotic formulae for $\Psi_{\vee}$ and $\Upsilon_{\vee}$ in the regularly varying and the subexponential case looks pretty similar, as both formulae rely on a minimum inside an integral. However, in the case of regularly varying claims the formulae immediately provide the principal behavior of the tail, only the constant needs more computation. On the contrary, in the subexponential case the initial capital $\mathbf{u}$ is involved strongly inside the integral and even to obtain the asymptotics up to a constant, one has to calculate the integral explicitely.
	\end{remark}
\begin{example} \label{Ex_SwitchDifferentAlphas}
In the setting of Corollary \ref{Theorem_RV_Asymptotics} assume that $\alpha_1<\alpha_2$. 
Then in all asymptotics given in Corollary \ref{Theorem_RV_Asymptotics} the terms including $F_2$ that are regularly varying with index $-\alpha_2+1$ are dominated by the terms involving $F_1$ which are regularly varying with index $-\alpha_1+1$. This yields that in this case
\begin{align*} %\label{eq_SingleAsymptotics_SC}
\lim_{u\to\infty} \frac{\Upsilon_i(b_i u)}{\mathbb{E}\left[ \int_{0}^\infty \overline{F}_1\left(\tfrac{u b_i + v c_i^\ast}{A_{i1}}\right)\diff v\right]} &= \frac{\lambda_1}{\lambda}, \quad  i=1,2, 
\end{align*}
as long as $\PP(A_{i1}=0)<1$. Similarly, since $\zeta=\infty$, we obtain
\begin{align*}
%\label{eq_OrAsymptotics_SC}
\lim_{u\to\infty} \frac{\Upsilon_{\vee}(u)}{u\cdot \overline{F}_1(u)}& =\frac{\lambda_1 C_\vee}{\lambda}=\frac{\lambda_1}{\lambda}  \mathbb{E}\left[\int_0^\infty  \left(\min\left\{\tfrac{vc_1^*+b_1}{A_{11}},\tfrac{vc_2^*+b_2}{1-A_{11}}\right\}\right)^{-\alpha_1} \diff v\right].
\end{align*}
With these observations at hand we may conclude that \eqref{eq_notAsymptoticEquivalentSW} holds if and only if 
\begin{equation} \label{eq_notAsymptoticEquivalent_SC}
\lim_{u\to\infty} \frac{\mathbb{E}\left[\int_{0}^\infty \overline{F}_1\left(\tfrac{u b_1 + v c_1^\ast}{A_{11}}\right)\diff v + \int_{0}^\infty \overline{F}_1\left(\tfrac{u b_2 + v c_2^\ast}{1-A_{11}} \right)\diff v \right]}{\mathbb{E}\left[\int_0^\infty  \left(\min\left\{\frac{vc_1^*+b_1}{A_{11}},\frac{vc_2^*+b_2}{1-A_{11}}\right\}\right)^{-\alpha_1} \diff v\right] u \cdot \overline{F}_1(u) }\neq 1.
\end{equation} 
Thus, given \eqref{eq_notAsymptoticEquivalent_SC}, we get 
\begin{equation*}%\label{eq_AndAsymptotics_SC}
\lim_{u\to\infty} \frac{\Upsilon_{\wedge}(u)}{\mathbb{E}\left[\int_{0}^\infty \overline{F}_1\left(\tfrac{u b_1 + v c_1^\ast}{A_{11}}\right)\diff v + \int_{0}^\infty \overline{F}_1\left(\tfrac{u b_2 + v c_2^\ast}{1-A_{11}} \right)\diff v \right]- C_\vee  u \cdot\overline{F}_1(u)}
= \frac{\lambda_1}{\lambda},
\end{equation*}
while otherwise
\begin{equation*}%\label{eq_AndVanishes_SC}
 \Upsilon_{\wedge}(u) =o\left(u\cdot\overline{F_1}(u)\right). \end{equation*}
\end{example}

\begin{remark}
	The above example can be generalized in the sense that a regularly varying tail dominates any lighter tail, no matter whether this is regularly varying as well or not.\\
	Indeed, assuming that w.l.o.g. $X_1\in \operatorname{RV}(\alpha)$ for $\alpha>1$ and $X_2$ is such that \begin{equation} \label{eq_condition_mixed}
	\overline{F}_2(x) = o(\overline{F}_1(x))
	\end{equation}
	one can prove in complete analogy to the results from the last subsection, that the workload exceedance probabilities  \eqref{eq-singleexceed}, \eqref{eq-orexceed}, and \eqref{eq-andexceed}  fulfil
\begin{align*}% \label{eq_SingleAsymptotics}
	 \Upsilon_i(b_iu) &\sim 
	 \frac{\lambda_1}{\lambda } \mathbb{E}\left[\int_{0}^\infty \overline{F}_1\left(\tfrac{b_i u + vc_i^\ast}{A_{i1}}\right) \diff v\right]  , \quad i=1,2,\\
	 %\label{eq_OrAsymptotics_mixed}
	 \Upsilon_{\vee}(u) &\sim \frac{\lambda_1}{\lambda} \mathbb{E}\left[\int_0^\infty \Big(\min\Big\{\tfrac{vc_1^*+b_1}{A_{11}},\tfrac{vc_2^*+b_2}{A_{21}}\Big\}\Big)^{-\alpha}\diff v\right]\cdot u \overline{F}_1(u),
	\end{align*}
	and, assuming additionally that \eqref{eq_notAsymptoticEquivalentSW} holds,
\begin{align*}
	\Upsilon_{\wedge}(u) &\sim  \frac{\lambda_1}{\lambda}\left( \mathbb{E}\left[\int_{0}^\infty \overline{F}_1\left(\tfrac{b_1 u + vc_1^\ast}{A_{11}}\right) \diff v \right] +  \mathbb{E}\left[ \int_{0}^\infty \overline{F}_1\left(\tfrac{b_2 u + vc_2^\ast}{A_{21}}\right) \diff v \right] \right.\\
	& \qquad \qquad \left.-  \mathbb{E}\left[\int_0^\infty \Big(\min\Big\{\tfrac{vc_1^*+b_1}{A_{11}},\tfrac{vc_2^*+b_2}{A_{21}}\Big\}\Big)^{-\alpha}\diff v\right] \cdot u\overline{F}_1(u)\right),
	\end{align*}
 while otherwise \begin{equation*}
	\Upsilon_{\wedge}(u) =o\left(u\cdot \overline{F_1}(u) \right). \end{equation*}	\end{remark}

%%%%%%%%%%%%%%%%%%%%%%%%%%%%%%%%%%%%%%%%%%%%%%%%%%%%%%%%%%%
	\section{The light-tailed case}\label{S4}
	\setcounter{equation}{0}
%%%%%%%%%%%%%%%%%%%%%%%%%%%%%%%%%%%%%%%%%%%%%%%%%%%%%%%%%%

 In this section we will study the asymptotic behavior of ruin/workload exceedance probabilities for claims/jobs that are typically small, i.e. we will assume throughout this section that the moment generating functions $\varphi_{X_j}(x)=\EE[\exp(xX_j)]$, $j=1,2$, are such that  \begin{equation}\label{eq-lightcondition}
\varphi_{X_j}(x_j)<\infty\;\text{ for some } x_j>0, \quad j=1,2.
\end{equation}

\subsection{Results in the risk context}\label{S4a}

As in the heavy-tailed setting we start by studying the dual risk model. Again, the ruin probabilities for the single components are particularly easy to treat. The following lemma is obtained by a direct application of Lundberg's well-known inequality and the Cramér-Lundberg approximation, see e.g. \cite[Thms. IV.5.2 and IV.5.3]{asmussenalbrecher}. In Section \ref{S4c} a short proof is provided.

\begin{lemma}\label{lem-singleruinlight}
	Assume the claim size variables $X_1,X_2$ fulfill\eqref{eq-lightcondition} and assume there exist (unique) solutions $\kappa_1,\kappa_2>0$ to
	\begin{align} \label{eq_ThLightCond}
	c_i\kappa_i &= \mathbb{E}\left[ \lambda_1(\varphi_{X_1}(\kappa_i A_{i1})  -1) +  \lambda_2 (\varphi_{X_2}(\kappa_i A_{i2})  -1)\right], \quad i=1,2.
	\end{align}
Then the ruin probabilities of the single components fulfil
	\begin{align*}
	\Psi_i(u) \leq e^{-\kappa_i u} \;\text{for all }u>0,\quad \text{and} \quad  \Psi_i(u) \sim C_i e^{-\kappa_i u}, \quad i=1,2,
	\end{align*}
	where 
	\begin{align} 
	C_i & = \frac{\lambda c_i^\ast}{ \EE[\lambda_1 A_{i1}\varphi_{X_1}'(\kappa_iA_{i1})+ \lambda_2A_{i2}\varphi_{X_2}'(\kappa_iA_{i2})] -c_i}\label{eq_LightConstants}  , \quad i=1,2.
	\end{align}
\end{lemma}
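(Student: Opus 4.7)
The plan is to reduce the ruin problem for each single component $R_i$ to a classical one-dimensional Cram\'er-Lundberg model and then invoke the standard Lundberg inequality and Cram\'er-Lundberg approximation (cf.\ \cite[Thms.\ IV.5.2 and IV.5.3]{asmussenalbrecher}). First, I would observe that the marginal process
\begin{equation*}
R_i(t) = \sum_{k=1}^{N(t)} (\bA_k\bB_k\bX_k)_i - t c_i
\end{equation*}
is itself a Cram\'er-Lundberg risk process with premium rate $c_i$, claim arrival rate $\lambda=\lambda_1+\lambda_2$, and i.i.d.\ claim sizes $Y_{i,k}$ whose distribution is obtained by conditioning on $\bB_k$: with probability $\lambda_j/\lambda$, the $k$-th jump of $R_i$ equals $A_{ij,k}X_{j,k}$ for $j=1,2$. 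Since the sequences $(\bA_k), (\bB_k), (\bX_k)$ are i.i.d.\ and mutually independent, the $Y_{i,k}$ are i.i.d.\ as well, with generic random variable $Y_i$ of moment generating function
\begin{equation*}
\varphi_{Y_i}(s) = \frac{\lambda_1}{\lambda}\,\EE[\varphi_{X_1}(sA_{i1})] + \frac{\lambda_2}{\lambda}\,\EE[\varphi_{X_2}(sA_{i2})],
\end{equation*}
which is finite in a neighborhood of $0$ thanks to \eqref{eq-lightcondition} and $A_{ij}\in[0,1]$.

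Second, I would verify that $\kappa_i$ plays the role of the Lundberg adjustment coefficient of $R_i$. The Lundberg equation $\lambda(\varphi_{Y_i}(\kappa)-1)=c_i\kappa$ for this one-dimensional model, after multiplying out $\lambda$ and rearranging, reads
\begin{equation*}
c_i\kappa = \EE[\lambda_1(\varphi_{X_1}(\kappa A_{i1})-1) + \lambda_2(\varphi_{X_2}(\kappa A_{i2})-1)],
\end{equation*}
which is exactly \eqref{eq_ThLightCond}; thus $\kappa_i$ is the adjustment coefficient of $R_i$. Note that the net-profit condition $c_i > \lambda\EE[Y_i]$ reduces, via independence of $\bA$ and $\bX$, to $c_i-\lambda_1\EE[A_{i1}]\EE[X_1]-\lambda_2\EE[A_{i2}]\EE[X_2]=\lambda c_i^\ast>0$, which is precisely \eqref{eq-safetyloading} and ensures applicability of the classical results.

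Third, an application of Lundberg's inequality immediately yields $\Psi_i(u)\leq e^{-\kappa_i u}$, and the Cram\'er-Lundberg approximation gives
\begin{equation*}
\Psi_i(u) \sim \frac{c_i-\lambda\EE[Y_i]}{\lambda\varphi_{Y_i}'(\kappa_i)-c_i}\,e^{-\kappa_i u}.
\end{equation*}
It remains to rewrite this constant in terms of the original parameters. The numerator equals $\lambda c_i^\ast$ by \eqref{eq-safetyloading}, and differentiating $\varphi_{Y_i}$ under the expectation (justified since $\kappa_i$ lies in the interior of the domain of finiteness of $\varphi_{Y_i}$) delivers
\begin{equation*}
\lambda\varphi_{Y_i}'(\kappa_i) = \EE[\lambda_1 A_{i1}\varphi_{X_1}'(\kappa_i A_{i1}) + \lambda_2 A_{i2}\varphi_{X_2}'(\kappa_i A_{i2})],
\end{equation*}
so that the expression matches \eqref{eq_LightConstants}. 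The only subtle point is the very first step, namely recognizing that the multivariate structure in \eqref{eq-def-R} collapses, componentwise, to a genuine i.i.d.\ compound Poisson sum; once this is clearly stated, the remainder is essentially a bookkeeping application of textbook univariate results.
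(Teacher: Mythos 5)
Your proposal is correct and follows essentially the same route as the paper: reduce each component $R_i$ to a univariate compound Poisson risk process with mixture claims $Y_i$, identify $\kappa_i$ from \eqref{eq_ThLightCond} as the adjustment coefficient via the conditional computation of $\varphi_{Y_i}$, and apply \cite[Thms.\ IV.5.2 and IV.5.3]{asmussenalbrecher}, rewriting the Cram\'er--Lundberg constant by differentiating under the expectation. No gaps.
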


Using \eqref{eq-includeexclude2} in the form $\Psi_\vee(u)\leq \Psi_1(b_1 u)+\Psi_2(b_2 u)$ we easily derive the following Lundberg-type bound for $\Psi_\vee$ from the above Lemma.

\begin{corollary}
	Assume the claim size variables $X_1,X_2$ fulfill\eqref{eq-lightcondition} and assume there exist (unique) solutions $\kappa_1,\kappa_2>0$ to \eqref{eq_ThLightCond}, then the ruin probability for at least one component fulfills
	$$\Psi_\vee(u)\leq (e^{-\kappa_1 b_1 u}+e^{-\kappa_2 b_2 u})\wedge 1 \quad \text{for all }u>0.$$
\end{corollary}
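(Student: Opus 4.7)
The plan is to combine two simple ingredients, both of which are already at hand. The starting point is the inclusion--exclusion identity \eqref{eq-includeexclude2}, which can be rewritten as
$$\Psi_\vee(u) \;=\; \Psi_1(b_1 u) + \Psi_2(b_2 u) - \Psi_\wedge(u).$$
Since $\Psi_\wedge(u) \ge 0$, this immediately yields the pointwise bound $\Psi_\vee(u) \le \Psi_1(b_1 u) + \Psi_2(b_2 u)$, which is precisely the inequality announced in the sentence preceding the corollary.

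From here, I would apply the univariate Lundberg estimate from Lemma \ref{lem-singleruinlight} to each summand, obtaining $\Psi_i(b_i u) \le e^{-\kappa_i b_i u}$ for $i=1,2$, so that
$$\Psi_\vee(u) \;\le\; e^{-\kappa_1 b_1 u} + e^{-\kappa_2 b_2 u}.$$
Finally, since $\Psi_\vee$ is a probability, the trivial bound $\Psi_\vee(u) \le 1$ holds as well; combining these two bounds with a minimum gives the claim.

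There is essentially no obstacle: all the work sits inside Lemma \ref{lem-singleruinlight}, which reduces each marginal risk process to a classical one-dimensional Cram\'er--Lundberg problem with claim sizes $A_{i1} X_1$ and $A_{i2} X_2$ and produces the adjustment coefficient $\kappa_i$ via \eqref{eq_ThLightCond}. The hypotheses of the corollary coincide with those of the lemma, so the two-line argument above is complete.
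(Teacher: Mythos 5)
Your argument is correct and is exactly the paper's route: the bound $\Psi_\vee(u)\le \Psi_1(b_1u)+\Psi_2(b_2u)$ from \eqref{eq-includeexclude2} (since $\Psi_\wedge\ge 0$), combined with the single-component Lundberg bounds of Lemma \ref{lem-singleruinlight} and the trivial bound by $1$. Nothing is missing.
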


\begin{remark}
	Similarly to what has been done in \cite[Thm. 6.1]{Behme2020} it is also possible to derive a Lundberg bound for $\Psi_{\wedge,\text{sim}}$ via classical martingale techniques. Indeed one can show that for any $\kappa_1,\kappa_2>0$ such that 
	\begin{equation*}% \label{eq_Cond_Lundberg_Bound}
	\begin{aligned}
	\kappa_1c_1+\kappa_2 c_2 &= \lambda_1\left( \mathbb{E}\left[\varphi_{X_1}(\kappa_1 A_{11})\varphi_{X_1}(\kappa_2(1-A_{11})) \right]-1\right) \\ &\quad + \lambda_2\left( \mathbb{E}\left[ \varphi_{X_2}(\kappa_1 A_{12})\varphi_{X_2}(\kappa_2(1-A_{12}))\right] -1\right)
	\end{aligned}
	\end{equation*} it holds that
	\begin{equation*}
	\Psi_{\wedge,\text{sim}}(u) \leq e^{-(\kappa_1 b_1 + \kappa_2 b_2) u},\quad u>0.
	\end{equation*}
	As this has no implications for the considered queueing model we will not go into further details here.
\end{remark}

To derive the asymptotics of $\Psi_{\wedge}$, $\Psi_{\wedge,\text{sim}}$ and $\Psi_{\vee}$ we rely on results from \cite{Avram2009}, which lead to the following Theorem.

\begin{theorem}\label{Theorem_Light_Asymptotics} 
		Assume the claim size variables $X_1,X_2$ fulfill\eqref{eq-lightcondition} and assume there exist (unique) solutions $\kappa_1,\kappa_2>0$ to \eqref{eq_ThLightCond}. Then \begin{align*}
		\Psi_\vee(u) &\sim C_1\cdot e^{-\kappa_1 b_1 u } + C_2\cdot e^{-\kappa_2 b_2 u}, \\ 
		\Psi_\wedge(u) & = o\left(C_1 \cdot e^{-\kappa_1 b_1 u} + C_2 \cdot e^{-\kappa_2b_2 u}\right), \\
\text{and}\quad 	\Psi_{\wedge,\text{sim}}(u) & = o\left(C_1 \cdot e^{-\kappa_1 b_1 u} + C_2 \cdot e^{-\kappa_2b_2 u}\right),
		\end{align*} with $C_1,C_2$ given in \eqref{eq_LightConstants}. 
\end{theorem}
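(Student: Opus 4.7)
The plan is to reduce all three claims to the single key estimate
\[
\Psi_\wedge(u) = o\!\left( C_1 e^{-\kappa_1 b_1 u} + C_2 e^{-\kappa_2 b_2 u} \right) \qquad \text{as } u \to \infty,
\]
and to obtain this estimate by invoking the bivariate light-tailed ruin asymptotics of \cite{Avram2009} applied to the two-dimensional spectrally positive compound Poisson process $(\bR(t))_{t \geq 0}$ defined in \eqref{eq-def-R}.

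Granting the key estimate, the $\Psi_\vee$-asymptotic falls out immediately from the inclusion-exclusion identity \eqref{eq-includeexclude2},
\[
\Psi_\vee(u) = \Psi_1(b_1 u) + \Psi_2(b_2 u) - \Psi_\wedge(u),
\]
combined with the marginal asymptotics $\Psi_i(b_i u) \sim C_i e^{-\kappa_i b_i u}$ supplied by Lemma \ref{lem-singleruinlight}: the subtracted term is of strictly smaller order than the sum. The bound for $\Psi_{\wedge,\text{sim}}$ is then trivial from the pointwise inequality $\Psi_{\wedge,\text{sim}}(u) \leq \Psi_\wedge(u)$, which transfers the little-$o$ estimate verbatim.

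The substantive step is thus the bound on $\Psi_\wedge$. Here I would apply the bivariate Cram\'er-type asymptotics of \cite{Avram2009} for the joint ruin event $\{\tau_1 < \infty\} \cap \{\tau_2 < \infty\}$, where $\tau_i$ denotes the first crossing time of $R_i$ above $b_i u$. After computing the joint moment generating function of the bivariate jump $\bA \bB \bX$ and identifying a joint adjustment coefficient $\gamma$ through a saddle-point condition, their results yield a sharp asymptotic of the form
\[
\Psi_\wedge(u) \sim c \, u^{-1/2} e^{-\gamma u}
\]
with $\gamma \geq \max(\kappa_1 b_1, \kappa_2 b_2)$. The polynomial prefactor $u^{-1/2}$ is precisely what is needed to produce the strict little-$o$ comparison against $C_1 e^{-\kappa_1 b_1 u} + C_2 e^{-\kappa_2 b_2 u}$, even in the delicate boundary case $\kappa_1 b_1 = \kappa_2 b_2$ where the exponential rates coincide. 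Away from that boundary, the crude bound $\Psi_\wedge(u) \leq \min_{i=1,2} \Psi_i(b_i u)$ combined with Lemma \ref{lem-singleruinlight} already suffices, so it is exclusively in the degenerate regime that the full strength of \cite{Avram2009} is essential.

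The hardest part will be the verification that our coupled model fits the hypotheses of \cite{Avram2009}: the random-switching mechanism $\bA_k \bB_k$ acting on the shared claim vector $\bX_k$ induces a specific dependence structure that must be encoded into their bivariate L\'evy framework; the joint cumulant generating function must be computed explicitly in terms of $\varphi_{X_1}, \varphi_{X_2}$, and the distribution of $\bA$; and the non-degeneracy conditions ensuring that the polynomial correction term is genuinely present (and $\gamma$ is uniquely determined by the saddle-point equations) must be confirmed from \eqref{eq_ThLightCond}. These compatibility checks, rather than any genuinely new probabilistic content, are what will require most of the care.
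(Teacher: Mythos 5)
Your overall architecture coincides with the paper's: both arguments rest on the bivariate light-tailed results of \cite{Avram2009}, on the computation of the joint cumulant exponent of $(-R_1,-R_2)$ in terms of $\varphi_{X_1},\varphi_{X_2}$ and the law of $\bA$ (which you rightly single out as the main verification work), and on the trivial inequality $\Psi_{\wedge,\text{sim}}(u)\le\Psi_\wedge(u)$ for the third claim. Your reduction of the $\Psi_\vee$-asymptotics to the negligibility of $\Psi_\wedge$ via \eqref{eq-includeexclude2} and Lemma \ref{lem-singleruinlight} is exactly the inclusion--exclusion step that the paper performs implicitly through the cited theorem, and your observation that the crude bound $\Psi_\wedge(u)\le\min_{i}\Psi_i(b_iu)$ settles everything except the boundary case $\kappa_1b_1=\kappa_2b_2$ is correct.

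The gap is in how you handle that boundary case. You propose to extract a sharp asymptotic $\Psi_\wedge(u)\sim c\,u^{-1/2}e^{-\gamma u}$ from a saddle-point analysis in \cite{Avram2009}; but the exact joint-ruin asymptotics with polynomial prefactor are established there only for the \emph{degenerate} model (a single claim stream split in deterministic proportions), not for a general bivariate compound Poisson process such as the one generated by the random switch, so the input you lean on in precisely the delicate case is not available in the cited form. What \cite{Avram2009} does provide for the general model is its Theorem~3: once one verifies that $(-\kappa_1,0)$ and $(0,-\kappa_2)$, with $\kappa_i$ from \eqref{eq_ThLightCond}, are zeros of the joint cumulant exponent lying in the interior of its domain of finiteness (this is where \eqref{eq-lightcondition} enters), that theorem directly yields $\Psi_\vee(u)\sim\Psi_1(b_1u)+\Psi_2(b_2u)$ and hence $\Psi_\wedge(u)=o\left(\Psi_1(b_1u)+\Psi_2(b_2u)\right)$, uniformly in the relation between $\kappa_1b_1$ and $\kappa_2b_2$ and without any detour through exact asymptotics of $\Psi_\wedge$. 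The repair is therefore a matter of citing the right statement rather than of new probabilistic content: replace the saddle-point input by \cite[Thm.~3]{Avram2009}, carry out the cumulant computation you already flagged, and the rest of your argument goes through unchanged.
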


\subsection{Results in the switch context}\label{S4b}

Again, using Lemma \ref{Corollary_Duality} we summarize our findings from the last subsection to obtain the following corollary on the asymptotic behavior of the workload barrier exceedance probabilities in the switching model defined in Section \ref{S2a}.

\begin{corollary}[Asymptotics and bounds of the exceedance probabilities for light-tailed jobs] \label{Cor-lightasympswitch}
	Assume the workload variables $X_1,X_2$ are light-tailed such that \eqref{eq-lightcondition} holds and assume there exist (unique) solutions $\kappa_1,\kappa_2>0$ to \eqref{eq_ThLightCond}.
	Then the workload exceedance probabilities  \eqref{eq-singleexceed}, \eqref{eq-orexceed}, and \eqref{eq-andexceed} fulfil
	\begin{align*}
	\Upsilon_i(b_i u) &\leq e^{-\kappa_i b_i u} \quad\text{for all }u_i>0, i=1,2,\quad \\
\text{and} \quad 	\Upsilon_\vee(u)&\leq (e^{-\kappa_1 b_1 u} + e^{-\kappa_2 b_2 u})\wedge 1 \quad \text{for all }u>0. 
		\end{align*}
		Further, with $C_i, i=1,2,$ as in \eqref{eq_LightConstants},
	it holds
	\begin{align}
	\Upsilon_i(b_i u) &\sim C_i e^{- \kappa_i b_i u}, \quad i=1,2, \nonumber \\
	\Upsilon_\vee(u) &\sim C_1\cdot e^{-\kappa_1 b_1 u } + C_2\cdot e^{-\kappa_2 b_2 u}, \label{eq-lightorasymp} \end{align}
	while the probability that both workloads exceed their barrier fulfills
	\begin{align*}
\Upsilon_\wedge(u) & = o\left(C_1 \cdot e^{-\kappa_1 b_1 u} + C_2 \cdot e^{-\kappa_2b_2 u}\right).
	\end{align*}
\end{corollary}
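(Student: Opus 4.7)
The plan is to derive every assertion in the corollary by applying the duality identity from Lemma~\ref{Corollary_Duality} to the corresponding statement already established for the dual risk model. Since Lemma~\ref{Corollary_Duality} gives $\Upsilon_i(u_i)=\Psi_i(u_i)$, $\Upsilon_\vee(u)=\Psi_\vee(u)$, and $\Upsilon_\wedge(u)=\Psi_\wedge(u)$ for every $u>0$, each claim translates verbatim into a claim about the $\Psi$'s, so the strategy is simply a bookkeeping exercise: verify that the hypotheses of the corollary match those of the invoked risk-side results, and transfer the conclusions.

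First I would handle the single-server statements. The hypothesis \eqref{eq-lightcondition} together with the existence of the Lundberg exponents $\kappa_1,\kappa_2>0$ solving \eqref{eq_ThLightCond} are exactly the hypotheses of Lemma~\ref{lem-singleruinlight}. That lemma yields the Lundberg-type inequality $\Psi_i(u)\leq e^{-\kappa_i u}$ and the Cram\'er--Lundberg-type asymptotics $\Psi_i(u)\sim C_i e^{-\kappa_i u}$ with $C_i$ as in \eqref{eq_LightConstants}. Evaluating at $u_i=b_i u$ and applying $\Upsilon_i(b_i u)=\Psi_i(b_i u)$ immediately gives the bound $\Upsilon_i(b_iu)\leq e^{-\kappa_i b_i u}$ and the equivalence $\Upsilon_i(b_iu)\sim C_i e^{-\kappa_i b_i u}$.

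Next I would address $\Upsilon_\vee$. The bound $\Upsilon_\vee(u)\leq (e^{-\kappa_1 b_1 u}+e^{-\kappa_2 b_2 u})\wedge 1$ follows directly from the corollary preceding Theorem~\ref{Theorem_Light_Asymptotics}, which bounds $\Psi_\vee(u)$ by the same quantity (via the elementary inequality $\Psi_\vee(u)\leq \Psi_1(b_1 u)+\Psi_2(b_2 u)$ and trivially by $1$), combined with $\Upsilon_\vee=\Psi_\vee$. The asymptotic equivalence \eqref{eq-lightorasymp} follows directly from the first part of Theorem~\ref{Theorem_Light_Asymptotics}.

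Finally, the statement on $\Upsilon_\wedge(u)$ comes from the third claim of Theorem~\ref{Theorem_Light_Asymptotics}, namely $\Psi_\wedge(u)=o(C_1 e^{-\kappa_1 b_1 u}+C_2 e^{-\kappa_2 b_2 u})$, again transferred via $\Upsilon_\wedge(u)=\Psi_\wedge(u)$. There is no real obstacle here since the corollary is genuinely a compilation of previously proved facts; the only thing to be slightly careful about is that all involved quantities appearing in the statement (the constants $C_i$, the exponents $\kappa_i$, and the integrated tail / moment-generating function expressions) are defined identically on the risk and queueing sides, which is true by construction of the dual model in Section~\ref{S2b}. Consequently the proof reduces to the single line already written in the excerpt: the result follows from Lemma~\ref{Corollary_Duality}, Lemma~\ref{lem-singleruinlight}, the corollary bounding $\Psi_\vee$, and Theorem~\ref{Theorem_Light_Asymptotics}.
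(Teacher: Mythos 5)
Your proposal is correct and follows exactly the paper's intended argument: the corollary is a direct compilation of Lemma \ref{lem-singleruinlight}, the Lundberg-type corollary for $\Psi_\vee$, and Theorem \ref{Theorem_Light_Asymptotics}, transferred to the queueing side via the duality identities of Lemma \ref{Corollary_Duality}. The hypothesis matching you perform is the only content needed, and it checks out.
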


\begin{remark}
	Note that the light-tail assumption \eqref{eq-lightcondition} does not necessarily imply existence of $\kappa_1,\kappa_2>0$ solving \eqref{eq_ThLightCond}. Assuming for $j=1,2$ the slightly stronger condition
	\begin{center}\begin{minipage}{15cm}
			Either $$\varphi_{X_j}(x_j)<\infty\quad \text{ for all }x_j<\infty,$$ or there exists $x_j^\ast<\infty$ such that $$\varphi_{X_j}(x_j)<\infty\; \text{ for all } x_j<x_j^\ast \quad \text{and} \quad \varphi_{X_j}(x_j)=\infty\; \text{ for all } x_j\geq x_j^\ast.$$	
	\end{minipage}\end{center}
	however is sufficient for  existence of $\kappa_1,\kappa_2>0$. \\
	In case that the above condition fails, i.e. for some $j\in\{1,2\}$ there exists $x_j^\ast$ such that $\varphi_{X_j}(x_j)<\infty$ for all  $x_j\leq x_j^\ast$ and $\varphi_{X_j}(x_j)=\infty$ for all $x_j> x_j^\ast$, then existence of $\kappa_1,\kappa_2$ depends on the chosen parameters of the model; see e.g. \cite[Chapter IV.6a]{asmussenalbrecher} for a more thorough discussion of this.
\end{remark}

\begin{remark}
If $\kappa_1 b_1 \neq \kappa_2b_2$ then the summand of lower order on the right hand side of \eqref{eq-lightorasymp} can be omitted in the asymptotic equivalence. Thus, in contrast to the regularly varying case, the vector $\mathbf{b}$ here is crucial for the exact asymptotic behavior and contributes more than just inside the constant.  \\
On the other hand we immediately see that, given two job distributions and hence given $\kappa_1,\kappa_2>0$, we can choose $b_1,b_2$ in order to minimize the joint exceedance probabilities. The optimal $\mathbf{b}$ then solves  
\begin{equation*} b_1\kappa_1 = b_2\kappa_2, \quad \text{i.e.} \quad b_1= \frac{\kappa_2}{\kappa_1+\kappa_2},\quad \text{and} \quad b_2= \frac{\kappa_1}{\kappa_1+\kappa_2}, \end{equation*}
which leads to
\begin{align*}
\Upsilon_\vee(u) \sim & (C_1+C_2) e^{-\frac{\kappa_1\kappa_2}{\kappa_1+\kappa_2}\cdot u}, \\ 
\text{while} \quad \Upsilon_\wedge(u) = & \mathit{o}\left( e^{-\frac{\kappa_1\kappa_2}{\kappa_1+\kappa_2}\cdot u} \right). 
\end{align*}
\end{remark}

	%%%%%%%%%%%%%%%%%%%%%%%%%%%%%%%%%%%%%%%%%%%%%%%%%%%%%%%%%%
	\section{Examples and simulation study}\label{S5}
	\setcounter{equation}{0}
	%%%%%%%%%%%%%%%%%%%%%%%%%%%%%%%%%%%%%%%%%%%%%%%%%%%%%%%%%%
	
	In this section we consider two special choices of the random switch for which we will evaluate the above results and compare to simulated data. The first part is dedicated to the special case of the Bernoulli switch, where the queueing processes become independent of each other. In the second part we discuss the special case of a non-random switch, where every job is shared between the servers with some predefined deterministic proportions. We finish in Section \ref{S5c} with a short comparison to study the influence of the chosen type of randomness on the exceedance probabilities. 
	
	\subsection{The Bernoulli switch}

	The Bernoulli switch does not split any jobs, but assigns the arriving jobs randomly to one of the two servers. More precisely we set
	$$A_{11}=1-A_{21}\sim\text{Bernoulli}(p), \quad \text{and} \quad A_{12}=1-A_{22}\sim\text{Bernoulli}(q),$$
	independent of each other with $p,q\in[0,1]$. This yields independence of the components of the process $(\bR(t))_{t\geq 0}$ 
	 which can now be represented as
	\begin{align*}
	R_1(t)&=\sum_{k=1}^{N_{1}^{(1)}(t)} X'_{1,k} + \sum_{\ell=1}^{N_{2}^{(1)}(t)} X'_{2,\ell}-tc_1 \quad \text{and} \quad R_2(t)=\sum_{k=1}^{N_{1}^{(2)}(t)} X''_{1,k} + \sum_{\ell=1}^{N_{2}^{(2)}(t)} X''_{2,\ell}-tc_2, 
	\end{align*}
	where $X'_{j,k}$ and $X''_{j,k}$ are independent copies of $X_{j,k}$, $j=1,2$, $k\in\NN$, and the counting processes $(N_1^{(1)}(t))_{t\geq 0}$, $(N_1^{(2)}(t))_{t\geq 0}$, $(N_2^{(1)}(t))_{t\geq 0}$, and $(N_2^{(2)}(t))_{t\geq 0}$ are independent Poisson processes with rates $\frac{\lambda_1 p}{\lambda_1+\lambda_2}$,  $\frac{\lambda_1 (1-p)}{\lambda_1+\lambda_2}$, $\frac{\lambda_2 q}{\lambda_1+\lambda_2}$, and $\frac{\lambda_2 (1-q)}{\lambda_1+\lambda_2}$, respectively. 
	In particular from \eqref{eq-andexceed} and \eqref{eq-includeexclude1} we obtain in the Bernoulli switch
	\begin{equation}\label{eq-includeexcludeBernoulli} \Upsilon_{\wedge}(u)=\Upsilon_1(b_1 u) \Upsilon_2(b_2 u) = \Upsilon_1(b_1 u) + \Upsilon_2(b_2 u)- \Upsilon_\vee(u),\end{equation}
	and hence $\Upsilon_{\wedge}(u)$ and $\Upsilon_{\vee}(u)$ can be expressed in terms of $\Upsilon_1(b_1 u)$, and $\Upsilon_2(b_2 u)$.  Thus, although the Bernoulli switch is not covered by Theorem \ref{Theorem_Subexponential_Risk_Or} in the subexponential setting, as \eqref{eq_theta_result} is not fulfilled, via \eqref{eq-includeexcludeBernoulli} one can still calculate the asymptotics using Lemma \ref{lem-singleruinasymptotic_subexp}.
		
	\begin{figure}[!thb]
		\centering
		\includegraphics[scale=0.5]{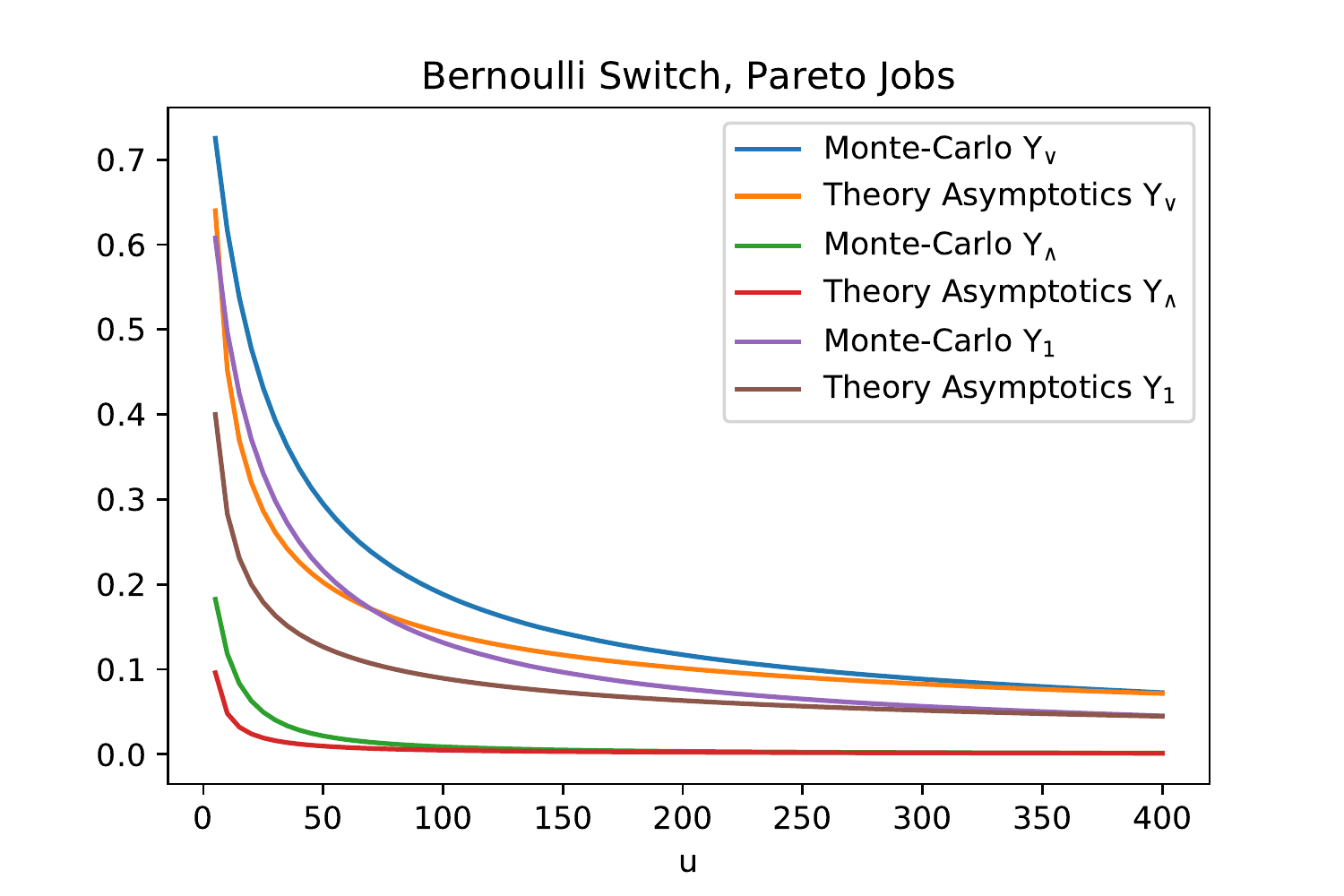}
		\includegraphics[scale=0.5]{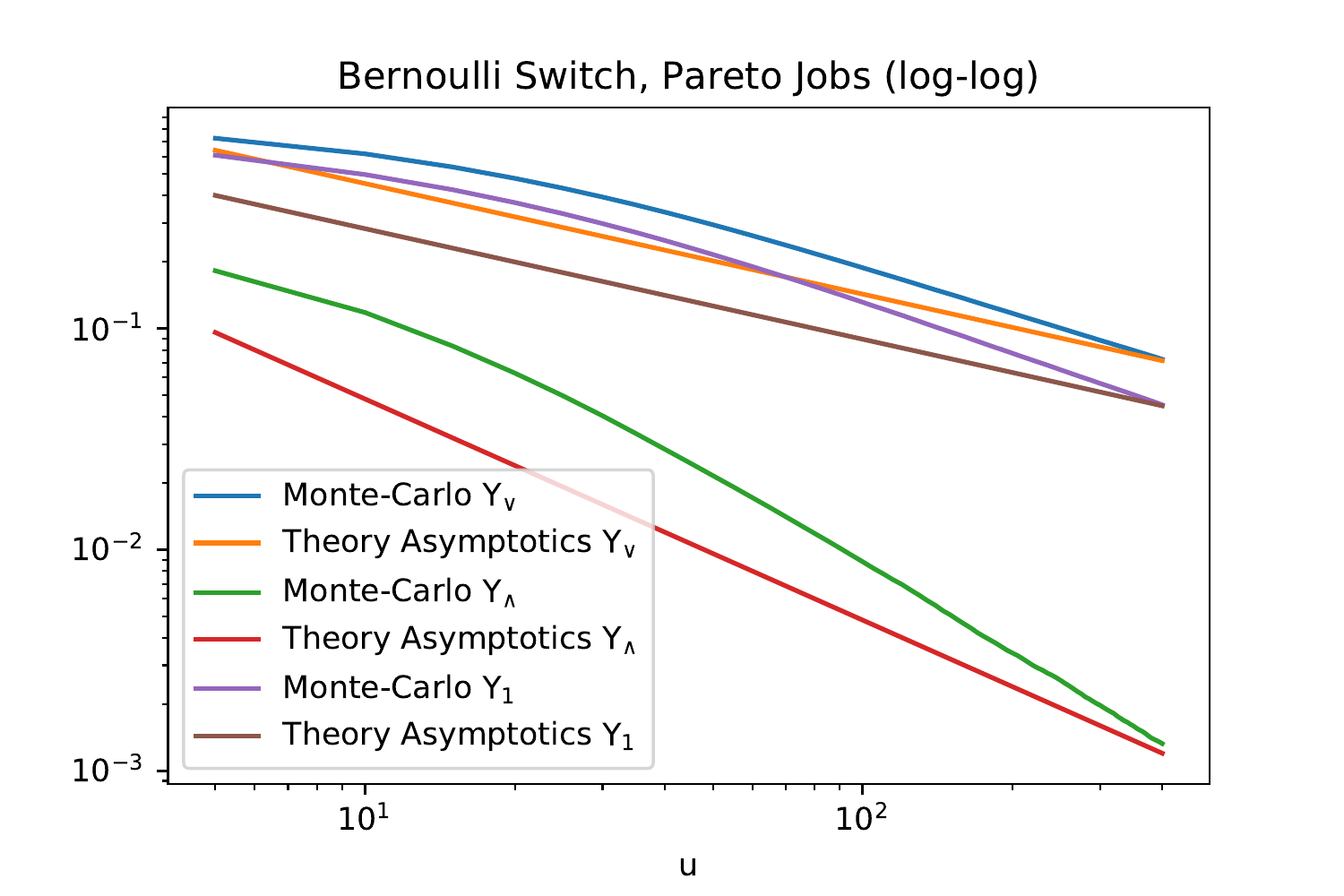}
		\caption{ Simulated exceedance probabilities in the Bernoulli switch in comparison to the obtained asymptotics in natural scaling (left) and as log-log plot (right). \\
			Here job sizes are Pareto distributed with $\overline{F}_1(x)=x^{-3/2}$, $x\geq 1$, and $\overline{F}_2(x)=4x^{-2}$, $x\geq 2$. Further $\lambda_1=\lambda_2=1$, $c_1=5$, $c_2=8$, and $b_1=0.8=1-b_2$. The Bernoulli switch is characterized by $p=0.4$ and $q=0.7$. For these parameters from \eqref{eq-asympBernheavy}  we derive
			$\Upsilon_1(u_1)\sim 0.8\cdot  u_1^{-0.5}$ and $\Upsilon_2(u_2) \sim  0.24  \cdot u_2^{-0.5}$ such that $\Upsilon_\wedge (u) \sim 0.48 \cdot u^{-1}$ and $\Upsilon_\vee(u)\sim 1.431 \cdot u^{-0.5}$ via \eqref{eq-includeexcludeBernoulli}.\\ Note that a direct evaluation of the asymptotics of $\Upsilon_\vee$ as given in Corollary \ref{Theorem_RV_Asymptotics} yields the same result.
		}
		\label{fig_BernPar}
	\end{figure}
	
	\begin{figure}[!thb]
		\centering
		\includegraphics[scale=0.5]{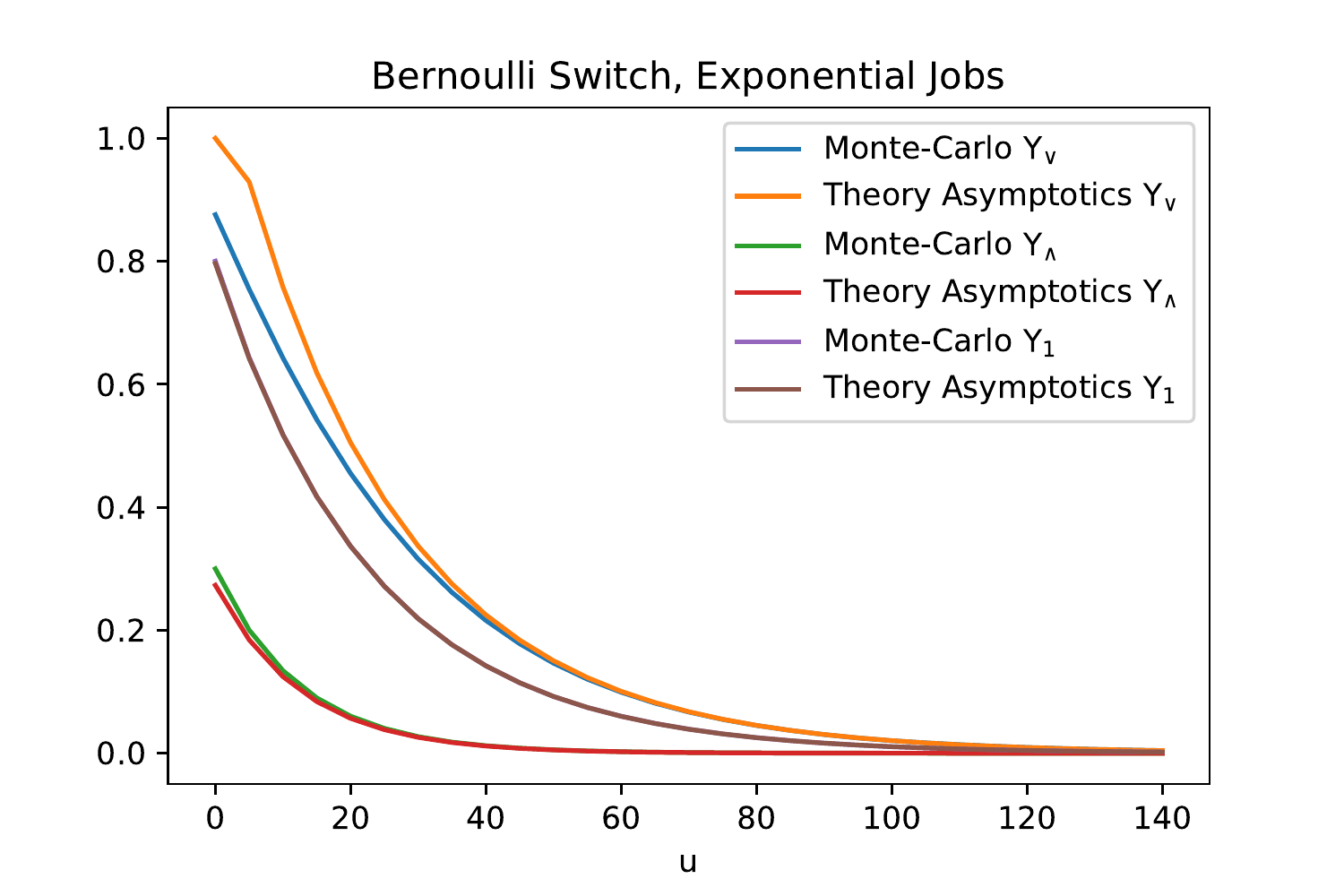}
		\includegraphics[scale=0.5]{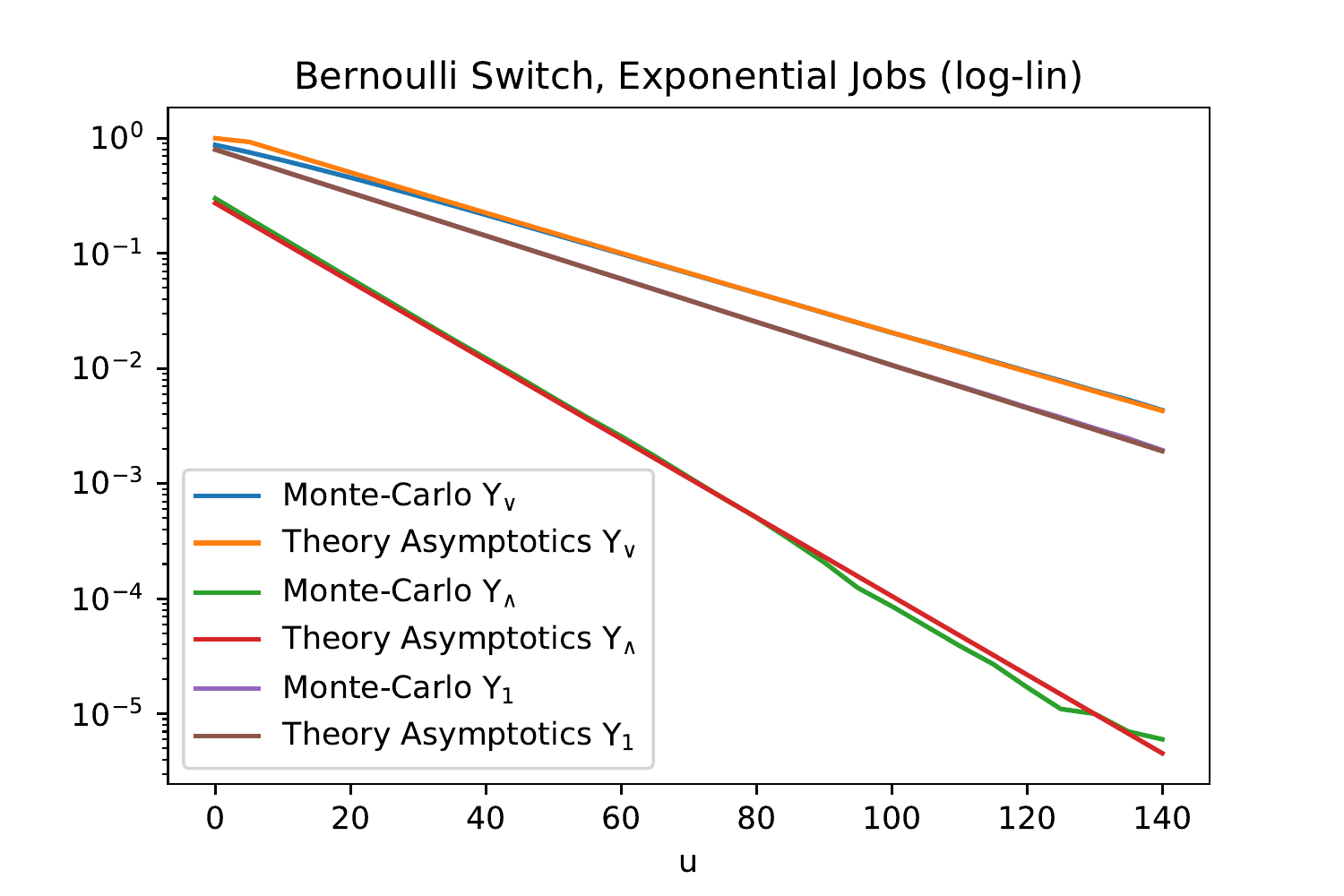}
		\caption{ Simulated exceedance probabilities in the Bernoulli switch in comparison to the obtained asymptotics in natural scaling (left) and as log-linear plot (right). \\
			Here jobs are exponentially distributed with $\overline{F}_1(x)=e^{-x/3}$, $x\geq 0$, and $\overline{F}_2(x)=e^{-x/4}$, $x\geq 0$. Further $\lambda_1=\lambda_2=1$, $c_1=5$, $c_2=8$, and $b_1=0.8=1-b_2$. The Bernoulli switch is characterized by $p=0.4$ and $q=0.7$. For these parameters from \eqref{eq-Bernkappas} we derive $\kappa_1 \approx 0.054$ and $\kappa_2 \approx 0.178$ and \eqref{eq-asympBernlight} yields $\Upsilon_1(u_1)\sim 0.796 \cdot  \exp(-0.054 u_1)$ and $\Upsilon_2(u_2) \sim  0.343 \cdot \exp(-0.178 u_2)$ from which $\Upsilon_\wedge(u)\sim 0.273 \cdot \exp(-0.079u)$ and $\Upsilon_\vee(u)\sim 0.796 \cdot  \exp(-0.043 u) +  0.343 \cdot \exp(-0.036 u)$ via \eqref{eq-includeexcludeBernoulli}. Note that in the latter case we keep both summands, since the exponents are close together.
		}
		\label{fig_BernExp}
	\end{figure}	

Indeed we obtain by direct application of Corollary \ref{Corollary_Subexp_Asymptotics} that if $F_I^1, F_I^2\in \cS$ (which holds in particular if $X_1\in \RV(\alpha_1)$, $X_2\in\RV(\alpha_2)$, $\alpha_1,\alpha_2>1$)
	\begin{equation}\label{eq-asympBernheavy}
	\begin{aligned}
	\Upsilon_1(b_1u)& \sim \frac{\lambda_1 p \int_{b_1 u}^\infty \overline{F}_1(y) \diff y + \lambda_2 q \int_{b_1 u}^\infty \overline{F}_2(y) \diff y}{c_1-\lambda _1 p \EE[X_1] - \lambda_2 q \EE[X_2]}, \\
	\text{and} \quad \Upsilon_2(b_2 u)& \sim \frac{\lambda_1 (1-p) \int_{b_2 u}^\infty \overline{F}_1(y) \diff y + \lambda_2 (1-q) \int_{b_2 u}^\infty \overline{F}_2(y) \diff y}{c_1-\lambda_1 (1-p) \EE[X_1] - \lambda_2 (1-q) \EE[X_2]}. \end{aligned}\end{equation}
	In the light-tailed case an application of Corollary \ref{Cor-lightasympswitch} yields
	\begin{equation}\label{eq-asympBernlight}
	\begin{aligned}
	\Upsilon_1(b_1u)& \sim \frac{c_1 - \lambda_1 p \EE[X_1] - \lambda_2 q \EE[X_2]}{\lambda_1 p\varphi_{ X_1}'(\kappa_1) + \lambda_2q\varphi_{X_2}'(\kappa_1)}e^{-\kappa_1 b_1 u}\\
	\text{and} \quad \Upsilon_2(b_2u)& \sim \frac{c_2 - \lambda_1 (1-p) \EE[X_1] - \lambda_2 (1-q) \EE[X_2]}{\lambda_1 (1-p)\varphi_{ X_1}'(\kappa_2)+ \lambda_2(1-q)\varphi_{X_2}'(\kappa_2)} e^{-\kappa_2 b_2 u},
	\end{aligned}\end{equation}
	as long as there exist $\kappa_1,\kappa_2>0$ such that \eqref{eq_ThLightCond} holds, which in the Bernoulli switch simplifies to
	\begin{equation}\label{eq-Bernkappas}
	\begin{aligned}
	c_1 \kappa_1&= \lambda_1 p (\varphi_{X_1}(\kappa_1)-1) + \lambda_2 q (\varphi_{X_2}(\kappa_1)-1)\\
	\text{and} \quad 	c_2 \kappa_2&= \lambda_1 (1-p) (\varphi_{X_1}(\kappa_2)-1) + \lambda_2 (1-q) (\varphi_{X_2}(\kappa_2)-1).
	\end{aligned}\end{equation}
	The asymptotic behavior of $\Upsilon_\vee$ and $\Upsilon_{\wedge}$ can now be described via \eqref{eq-includeexcludeBernoulli}.

	In Figures \ref{fig_BernPar} and \ref{fig_BernExp} we compare the asymptotics in the Bernoulli switch obtained in this way with data that has been simulated using standard Monte-Carlo techniques. As one can see in all cases the obtained asymptotics fit the data very well for $u$ large enough.

\subsection{The deterministic switch}\label{S5b}

The deterministic switch is characterized by setting
\begin{align*}
	A_{11}&=d_1=1-A_{21},\\
	\text{and} \quad A_{12}&= d_2 = 1-A_{22},
\end{align*}
for some predefined constants $d_1,d_2\in[0,1]$.\\
Note that for $\lambda_2=0$, meaning that we have only one source of claims, the corresponding dual risk model coincides with the \emph{degenerate model} considered in \cite{Avram2009, Avram2007, Fossetal}. Allowing two sources of claims, but setting $d_1\in(0,1)$, $d_2=1$ reduces our model to the setting treated in \cite{Badescu2011}. \\
Clearly, for any choice of $d_1,d_2$ in the deterministic switch one can easily evaluate the asymptotics of the exceedance probabilities as given in Corollaries \ref{Corollary_Subexp_Asymptotics} , \ref{Theorem_RV_Asymptotics}, and \ref{Cor-lightasympswitch} since all appearing expectations disappear.

\begin{figure}[!htb]
	\centering
	\includegraphics[scale=0.5]{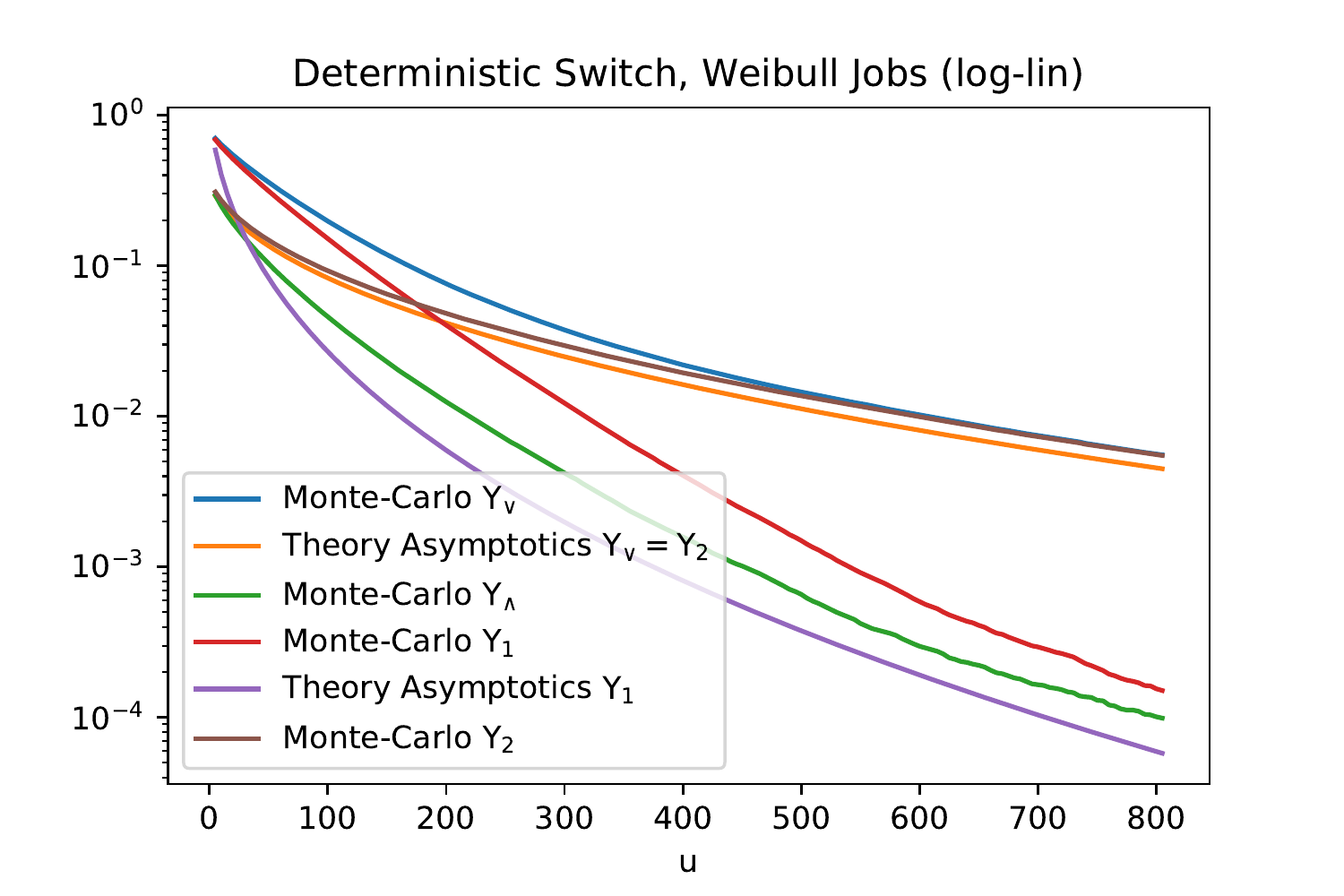}
	\includegraphics[scale=0.5]{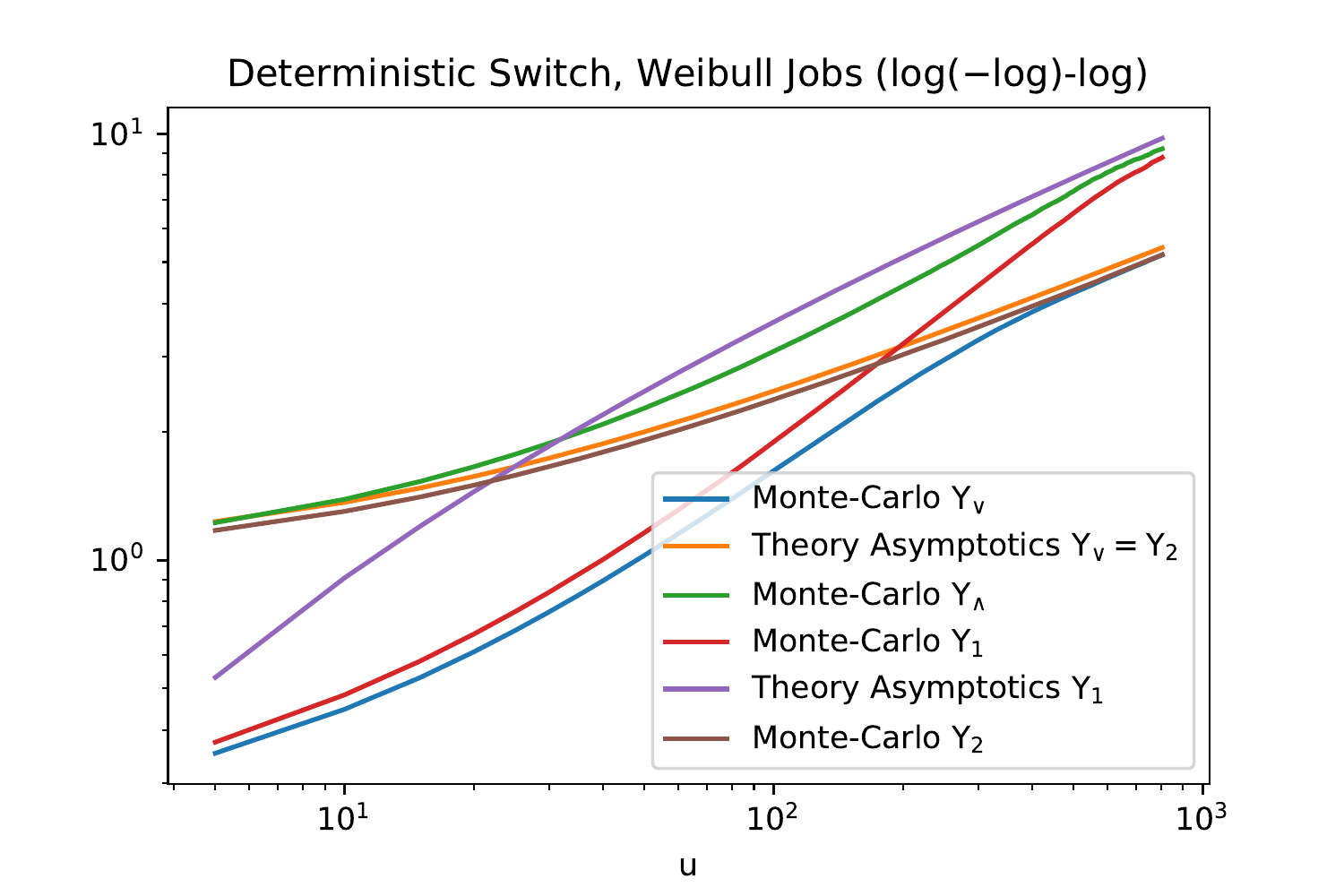}
	\caption{
			Simulated exceedance probabilities in the deterministic switch in comparison to the obtained asymptotics as log-lin plot (left) and log($-$log)-log plot (right), meaning the $y$-axis is scaled as $\log(-\log(y))$, while the $x$-axis is scaled as $\log(x)$. This particular scaling of the axes is chosen in order to get straight lines of ascent $\gamma^{-1}$, for functions of the type $e^{-\sqrt[\gamma]{x}}$. \\ 
			Job sizes are Weibull distributed with $\overline{F}_1(x)=1-\exp((2x)^\frac{1}{3})$, $x\geq 0$, and $\overline{F}_2(x)=1- \exp((x/2)^{\frac{1}{2}})$, $x\geq 0$. Further $\lambda_1=\lambda_2=1$, $c_1=5$, $c_2=8$, and $b_1=0.8=1-b_2$. The deterministic switch is characterized by $d_1=0.4$ and $d_2=0.7$. For these parameters from Corollary \ref{Corollary_Subexp_Asymptotics} we derive $\Upsilon_2(b_2u) \sim \Upsilon_\vee(u) \sim (0.1374\cdot u^{\frac{2}{3}} + 0.31449\cdot u^{\frac{1}{3}} +0.36)\cdot \exp(-0.87358\cdot  u^{\frac{1}{3}})$, and $\Upsilon_1(b_1u) \sim (1.51191 \cdot u^{\frac{2}{3}} + 1.90488\cdot u^{\frac{1}{3}} +  1.2)\cdot \exp( - 1.58740 \cdot u^{\frac{1}{3}})$, while for $\Upsilon_{\wedge}$ no asymptotics are given as \eqref{eq_notAsymptoticEquivalentSW} fails.
	}
	\label{fig_FixWeibull}
\end{figure}

\begin{figure}[!htb]
	\centering
	\includegraphics[scale=0.5]{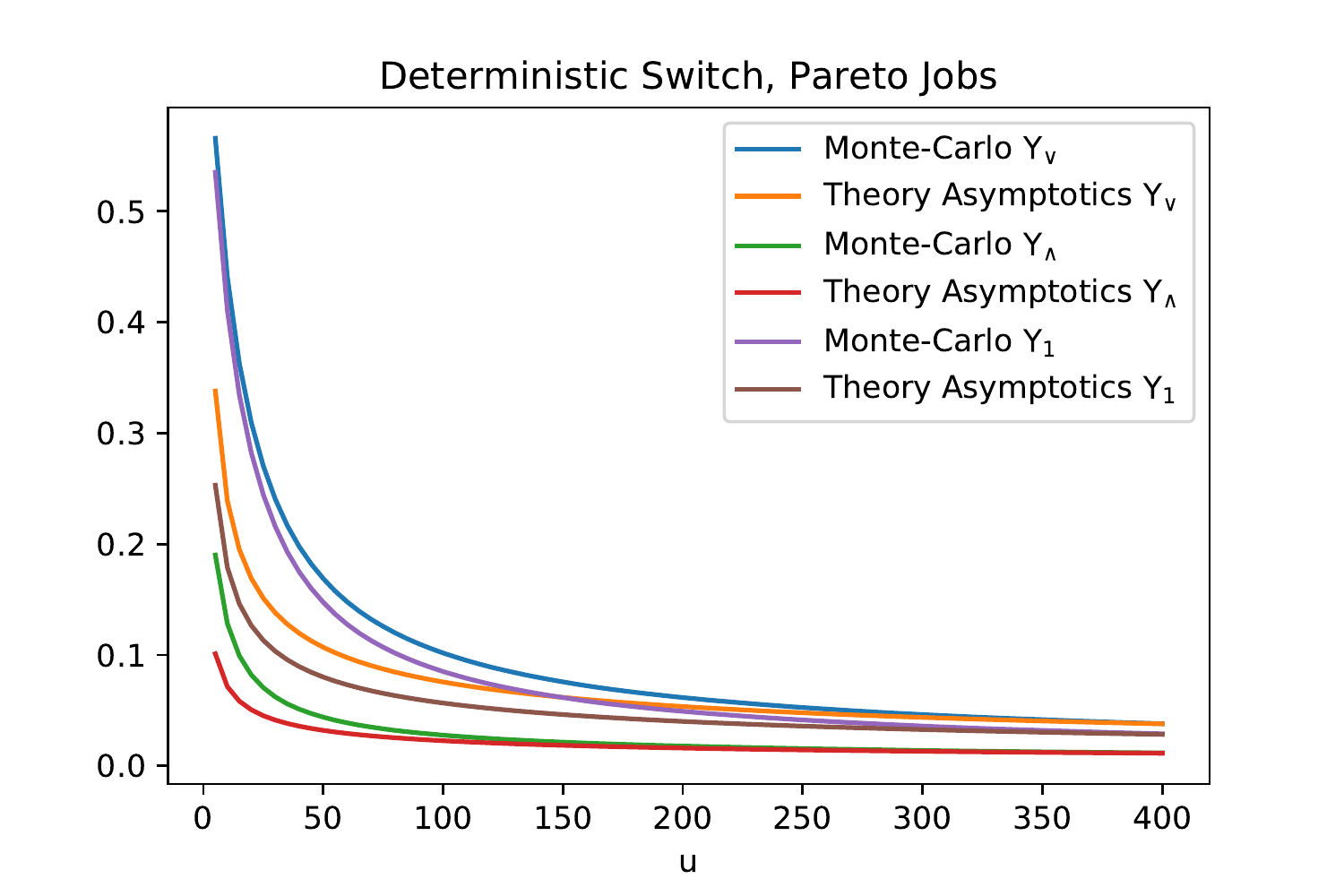}
	\includegraphics[scale=0.5]{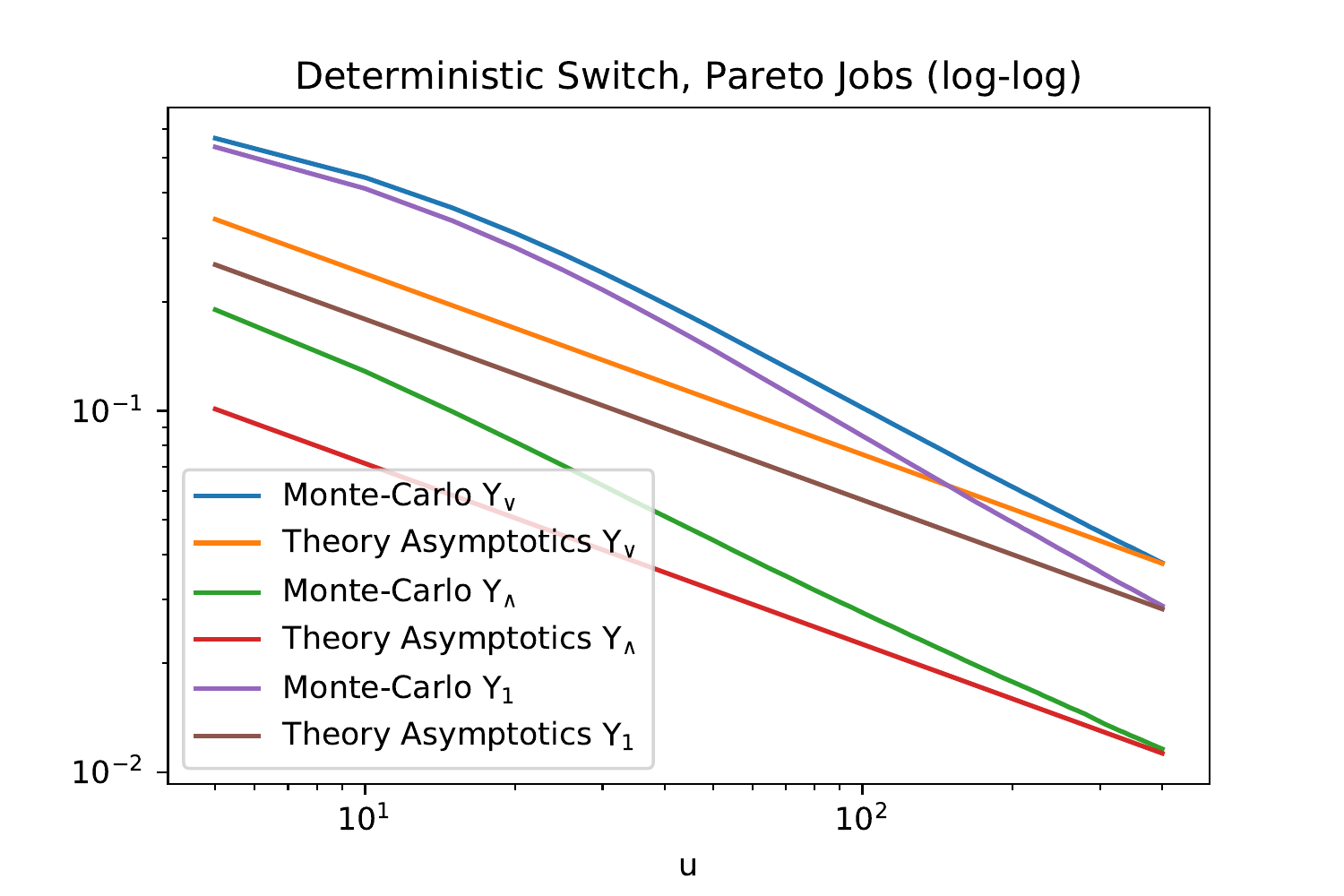}
	\caption{Simulated exceedance probabilities in the deterministic switch in comparison to the obtained asymptotics in natural scaling (left) and as log-log plot (right). \\
		Here - as in Figure \ref{fig_BernPar} - job sizes are Pareto distributed with $\overline{F}_1(x)=x^{-3/2}$, $x\geq 1$, and $\overline{F}_2(x)=4x^{-2}$, $x\geq 2$. Further $\lambda_1=\lambda_2=1$, $c_1=5$, $c_2=8$, and $b_1=0.8=1-b_2$. The deterministic switch is characterized by $d_1=0.4$ and $d_2=0.7$. For these parameters from Corollary \ref{Theorem_RV_Asymptotics} we derive $\Upsilon_1(u_1) \sim 0.506 \cdot u_1^{-0.5}$, and $\Upsilon_2(u_2) \sim 0.186 \cdot u_2^{-0.5} $, while $\Upsilon_\vee(u) \sim  0.756\cdot  u^{-0.5} $, and $\Upsilon_{\wedge} (u)  \sim 0.226\cdot u^{-0.5}$.}
	\label{fig_FixPar}
\end{figure}

\begin{figure}[!htb]
	\centering
	\includegraphics[scale=0.5]{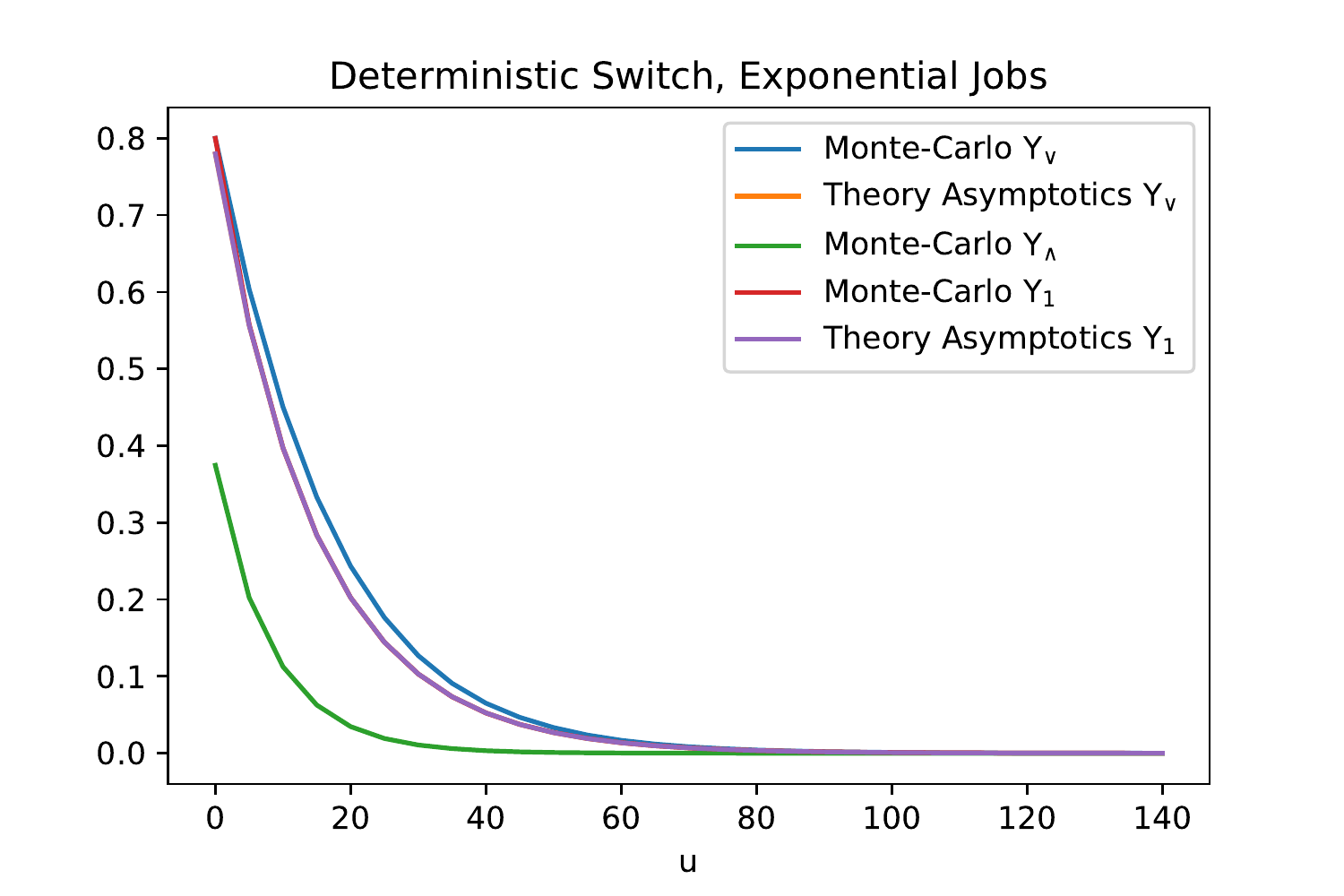}
	\includegraphics[scale=0.5]{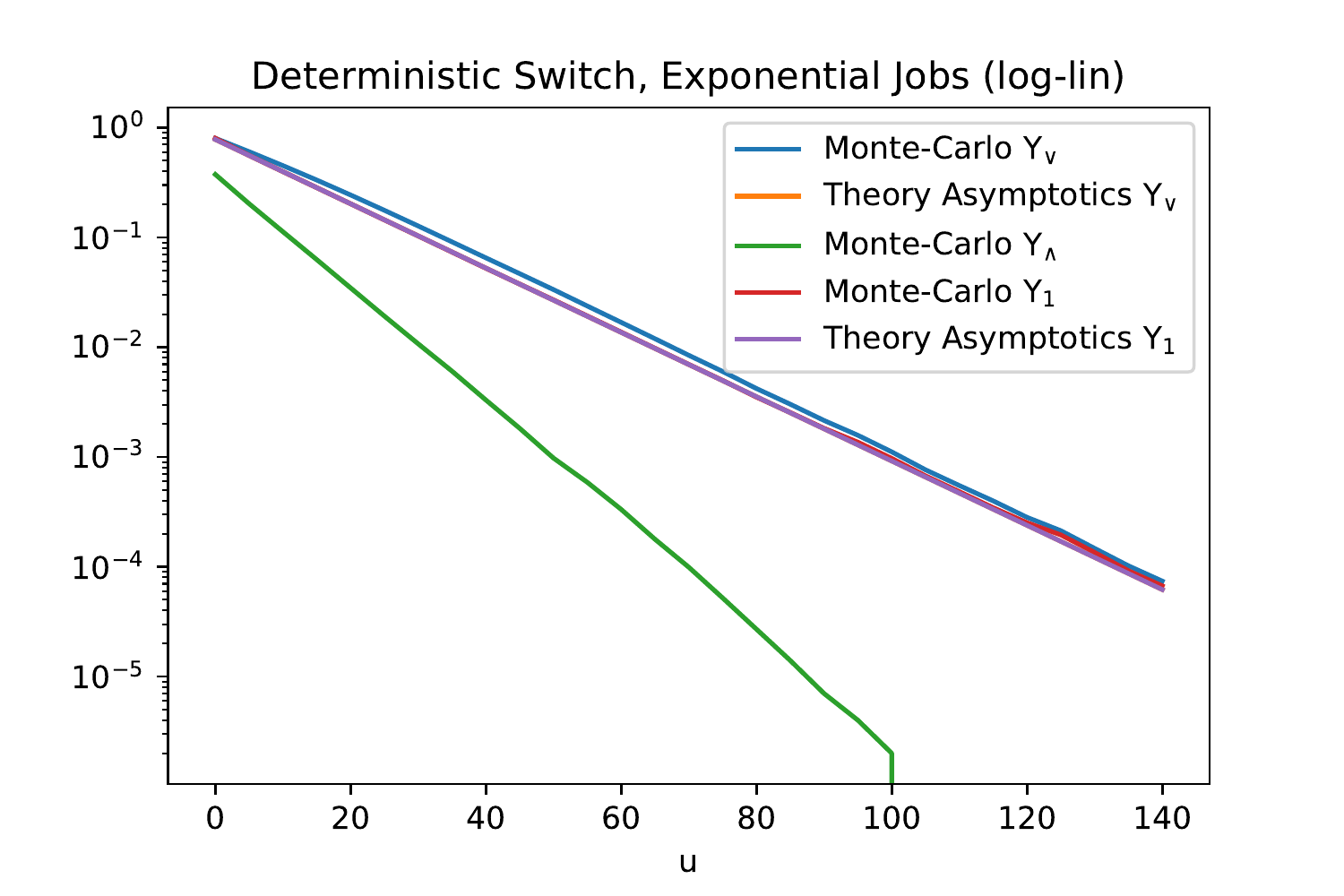}
	\caption{ Simulated exceedance probabilities in the deterministic switch in comparison to the obtained asymptotics in natural scaling (left) and as log-linear plot (right). \\
		Here jobs are - as in Figure \ref{fig_BernExp} - exponentially distributed with $\overline{F}_1(x)=e^{-x/3}$, $x\geq 0$, and $\overline{F}_2(x)=e^{-x/4}$, $x\geq 0$. Further $\lambda_1=\lambda_2=1$, $c_1=5$, $c_2=8$, and $b_1=0.8=1-b_2$. The deterministic switch is characterized by $d_1=0.4$ and $d_2=0.7$. For these parameters from \eqref{eq_ThLightCond} we obtain $\kappa_1\approx 0.084$ and $\kappa_2 \approx 0.383$, which yield  $\Upsilon_1(u_1)\sim 0.78 \cdot  \exp(-0.084 u_1)$ and $\Upsilon_2(u_2) \sim  0.341 \cdot \exp(-0.383 u_2)$, while  $\Upsilon_\vee(u)\sim 0.78 \cdot  \exp(-0.067 u) +  0.341 \cdot \exp(-0.077 u)$ and $\Upsilon_\wedge(u)= o(\exp(-0.067 u))$ by Corollary \ref{Theorem_RV_Asymptotics}.}
	\label{fig_FixExp}
\end{figure}

In Figures \ref{fig_FixWeibull}, \ref{fig_FixPar}, and \ref{fig_FixExp} we compare the asymptotics and bounds in the deterministic switch obtained in this way with data that has been simulated using standard Monte-Carlo techniques. Again simulations and theoretical asymptotics fit well in all cases. 
Note that in contrast to the cases with lighter tails, in the purely subexponential case depicted in Figure \ref{fig_FixWeibull} we observe that $\Upsilon_1$ is close to $\Upsilon_\vee$ for small $u$, but close to $\Upsilon_{\wedge}$ for large $u$. This can be interpreted as follows. While for small $u$ the net working speed $c_i^*$ determines the exceedance probabilities, for large $u$ this becomes less relevant and the workload exceedance is mainly influenced by the heavyness of the tails.
\begin{example} \label{Example_Foss_et_al}
Consider a deterministic switch with $d_1\in (0,1)$ and $\lambda_2=0$, i.e. there is only one source of jobs and the jobs are distributed deterministically to the two servers with proportions $d_1$ and $1-d_1$, and assume that $\tfrac{b_1}{d_1} < \tfrac{b_2}{1-d_1}$, such that the resulting dual risk model properly rescaled coincides with the degenerate model studied in \cite{Avram2009, Avram2007, Fossetal}. Then, applying Theorem \ref{Theorem_Subexponential_Risk_Or} in this setting reproduces the tail behavior of the probability of ruin of at least one component stated in \cite[Cor. 2.2]{Fossetal}. Interestingly, also the ruin probability of both insurance companies one derives in this case from Proposition \ref{Proposition_Subexponential_Risk_And}  coincides with the asymptotics provided in \cite[Eq. (2.9)]{Fossetal}, although the latter corresponds to simultaneous ruin. This suggests that in this special setting the ruin probability of both components and the simultaneous ruin probability of both components are asymptotically equivalent.
\end{example}

\subsection{A comparison of different switches}\label{S5c}

In this section we aim to compare the two above special cases of the Bernoulli switch and the deterministic switch with a non-trivial random switch, which we chose to be a Beta switch characterized by setting 
\begin{align*}
A_{11}&=1-A_{21} \sim \operatorname{Beta}(\beta_1, \gamma_1 ),\\
\text{and} \quad A_{12}&= 1-A_{22}\sim \operatorname{Beta}(\beta_2,\gamma_2)
\end{align*} for some constants $\beta_1,\beta_2,\gamma_1,\gamma_2>0$, where $\text{Beta}(\beta,\gamma)$ is the Beta distribution with density $\frac{\Gamma(\beta+\gamma)}{\Gamma(\beta)\Gamma(\gamma)} x^{\beta-1}(1-x)^{\gamma-1}$, $x\in[0,1]$. \\ 
To keep all examples comparable, we fix $\lambda_1$, $\lambda_2$, $\EE[X_1]$, $\EE[X_2]$, $\mathbb{E}[A_{11}]$ and $\mathbb{E}[A_{12}]$ such that the scenarios only differ in the behavior of the switch and the job sizes. Figure \ref{fig_Comparison} shows the approximate exceedance probabilities obtained by Monte Carlo simulation for the Bernoulli switch, the deterministic switch und two different Beta switches. 

\begin{figure}[!htb]
	\centering
	\includegraphics[scale=0.5]{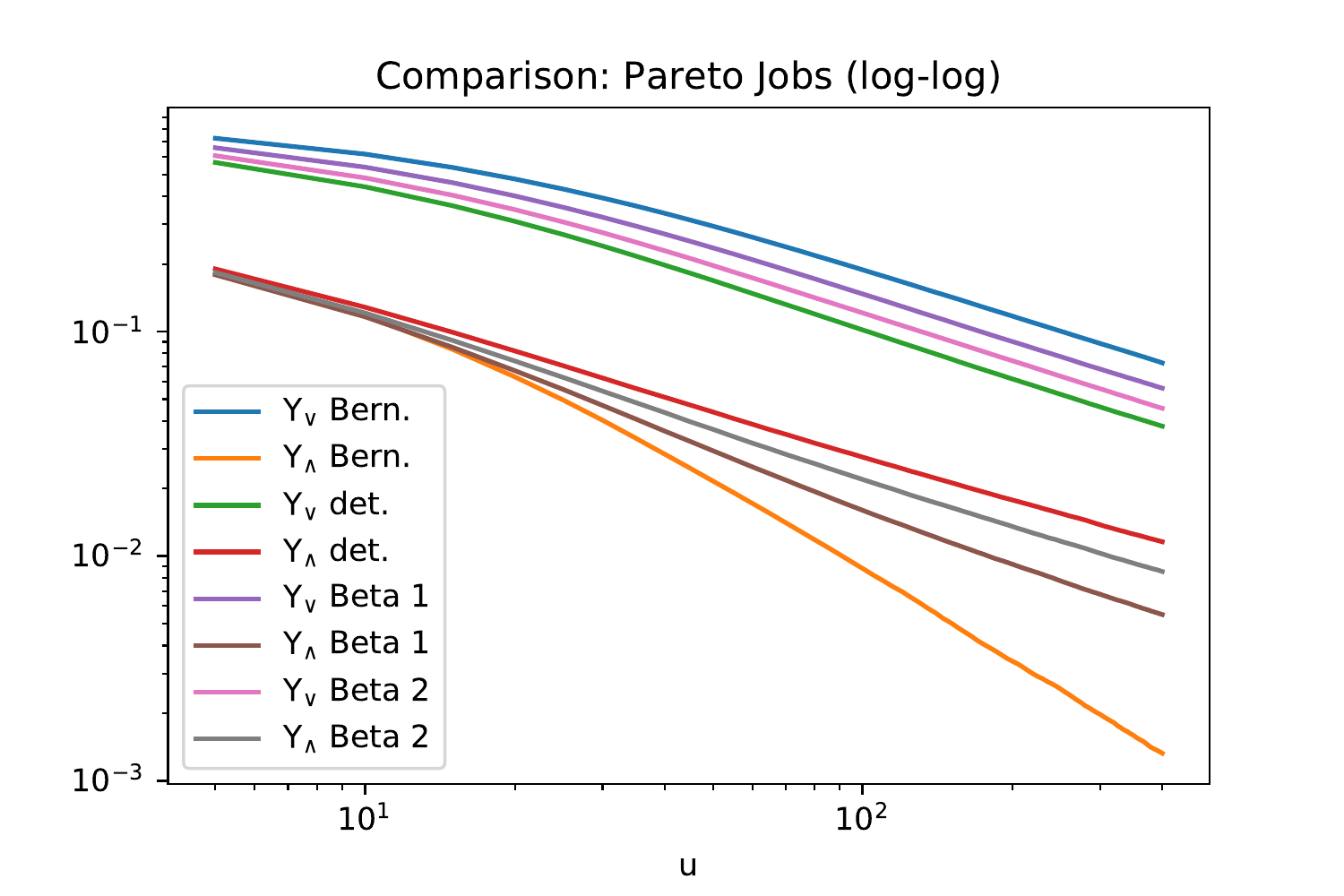}
	\includegraphics[scale=0.5]{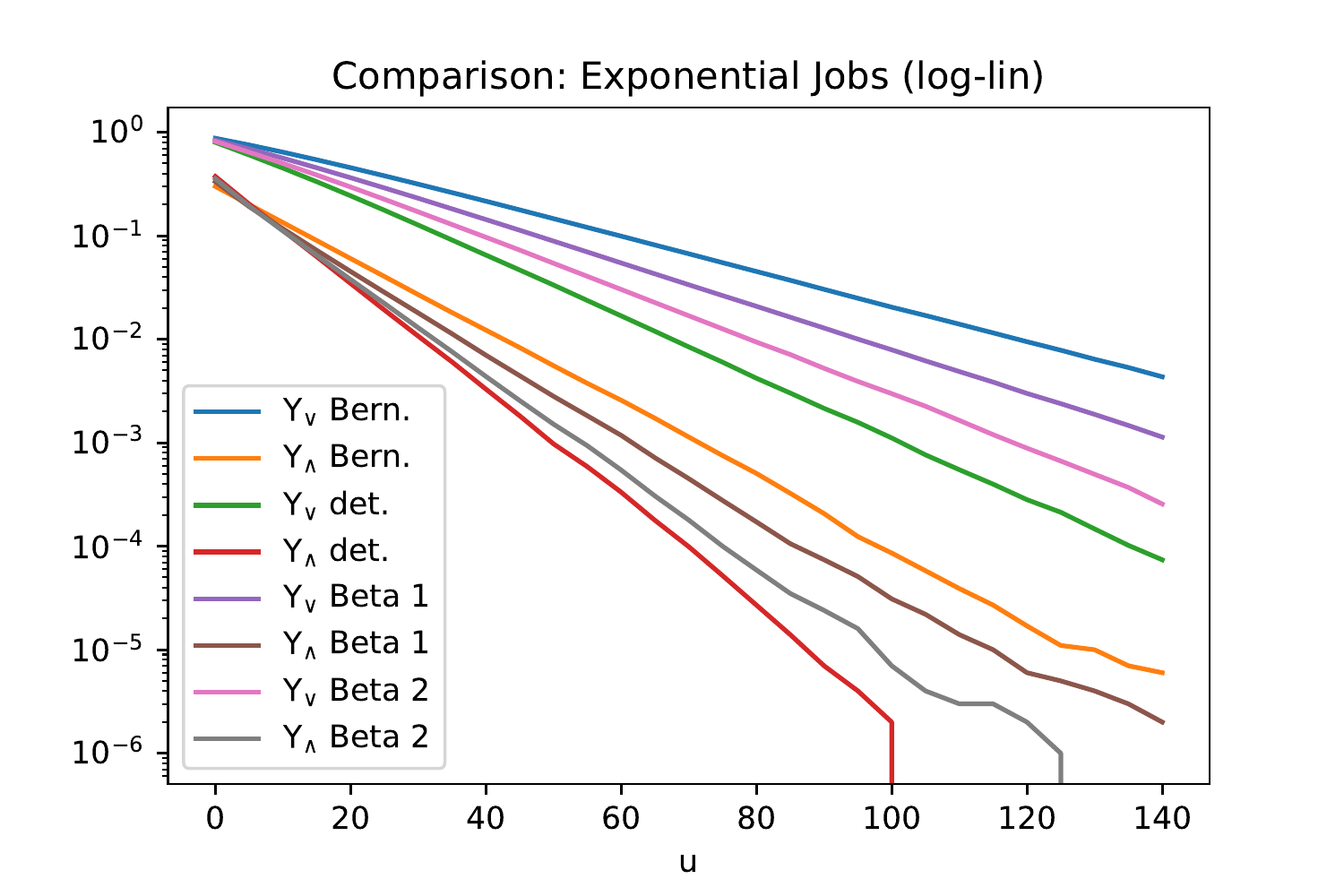}
	\caption{Simulated exceedance probabilities for different switches with heavy-tailed (left, log-log plot) and light-tailed (right, log-linear plot) job sizes. \\
	Throughout $\lambda_1=\lambda_2=1$, $c_1=5$, $c_2=8$, and $b_1=0.8=1-b_2$. On the left  - as in Figures \ref{fig_BernPar} and \ref{fig_FixPar} - job sizes are Pareto distributed with $\overline{F}_1(x)=x^{-3/2}$, $x\geq 1$, and $\overline{F}_2(x)=4x^{-2}$, $x\geq 2$. On the right jobs are - as in Figures \ref{fig_BernExp} and \ref{fig_FixExp} - exponentially distributed with $\overline{F}_1(x)=e^{-x/3}$, $x\geq 0$, and $\overline{F}_2(x)=e^{-x/4}$, $x\geq 0$. The  Bernoulli switch is characterized by $p=0.4$ and $q=0.7$, the deterministic switch is characterized by $d_1=0.4$ and $d_2=0.7$, the Beta switch 1 is characterized by $A_{11}=1-A_{21}\sim \operatorname{Beta}(0.4,0.6)$ and $A_{12}=1-A_{22} \sim \operatorname{Beta}(0.7,0.3)$, and the Beta switch 2 is characterized by $A_{11}=1-A_{21}\sim \operatorname{Beta}(1.5,2.25)$ and $A_{12}=1-A_{22} \sim \operatorname{Beta}(3,9/7)$.}
\label{fig_Comparison}
\end{figure}

As we can see, in the regularly varying case the probability that at least one of the workloads exceeds the barrier $\Upsilon_\vee$ tends to zero with the same index of regular variation for all choices of the random switch. In case of the probability that both components exceed their barrier $\Upsilon_\wedge$, the Bernoulli switch yields a faster decay due to the independence of the two workload processes in this model. \\
Further, the figure indicates the intuitive behavior: The more correlated the co-ordinates of the workload process are, the closer together are $\Upsilon_\vee$ and $\Upsilon_\wedge$. This leads to a trade-off between the two probabilities: Changing the switch towards reducing one probability raises the other and the Beta switches may serve here as a compromise to control both probabilities.

In the light-tailed case the trade-off between $\Upsilon_\vee$ and $\Upsilon_\wedge$ can not be observed. Quite the contrary, the more correlated the co-ordinates of the workload process are, the lower tend to be the exceedance probabilities. Hence in this case the Bernoulli switch yields the highest exceedance probabilities, while the deterministic switch obtains the best results.

Thus, for keeping $\Upsilon_\vee$ small, in general the simple deterministic switch yields good results. On the contrary, if one is interested to keep $\Upsilon_\wedge$ small, the tail-behavior of the appearing jobs is crucial for the choice of the optimal switch. Here again Beta switches or other non-trivial random switches may serve as a compromise in situations where the tail-behavior of the appearing jobs is unknown.

	%%%%%%%%%%%%%%%%%%%%%%%%%%%%%%%%%%%%%%%%%%%%%%%%%%%%%%%%%
	\section{Proofs}\label{S6}
	\setcounter{equation}{0}
	%%%%%%%%%%%%%%%%%%%%%%%%%%%%%%%%%%%%%%%%%%%%%%%%%%%%%%%%%%
	
	\subsection{Proofs for Section \ref{S3SE}}\label{S3cSE}	

To prove the asymptotic result for the ruin probability $\Psi_\vee$ as given in Theorem \ref{Theorem_Subexponential_Risk_Or} we need some preliminary definitions and results. 

Let \begin{equation*}
		\mathcal{R} := \{M \subset \mathbb{R}^2 : ~ M\text{ open, increasing, } M^c \text{ convex, } \mathbf{0}\notin\overline{M}\}
	\end{equation*} be a family of open sets, where \emph{increasing} means, that for each $\mathbf{x}\in M$ and $\mathbf{d}\geq \mathbf{0}$ we have $\mathbf{x}+\mathbf{d}\in M$. 
	Let $\Pi(\diff x)$ be a probability measure on $\mathbb{R}^2_{\geq 0}$. For $M \in\mathcal{R}$ we define a cdf on $[0,\infty)$ by setting
	\begin{equation*}
		F_M(t) := 1-\Pi(tM), ~ t\geq 0. 
	\end{equation*} Then, following \cite[Def. 4.6]{Samorodnitsky2016}, if $F_M \in\mathcal{S}$ we say that $\Pi \in\mathcal{S}_M$. Furthermore $\Pi$ is \emph{multivariate subexponential}, if $\Pi\in\mathcal{S}_\mathcal{R} := \bigcap_{M\in\mathcal{R}} \mathcal{S}_M$. \\ \smallskip 
Throughout this section we consider the specific sets 
\begin{equation} \label{eq_def_ruinset_A}
L:=\mathbb{R}^2\backslash\mathbb{R}^2_{\geq \mathbf{0}} \qquad \text{and} \qquad M := \mathbf{b}- L. 
\end{equation}
Then clearly $M\in \mathcal{R}$ and by \cite[Rem. 4.1]{Samorodnitsky2016} also $uM\in\mathcal{R}$ for all $u>0$. \\
Moreover, we specify $\Pi$ to be the probability measure of the claims $\mathbf{A}\mathbf{B}\mathbf{X}$ in the dual risk model described in Section \ref{S2b}. Then we can prove some basic relationships in the upcoming lemma.
\begin{lemma} \label{Lemma_Pi_explicitely}
Consider the probability measure $\Pi$ as just defined, the set $M$ as in \eqref{eq_def_ruinset_A}, and the constant $\theta$ given in \eqref{eq_theta_result}. Then 
\begin{align*}
	\Pi(u M + v \mathbf{c}^\ast) &=   g(u,v),  \end{align*}
with $g(u,v)$ defined in \eqref{eq_defguv},
and 
\begin{align*}\Pi(\RR_{\geq 0}^2 + v \mathbf{c}^\ast)= \frac{\lambda_1}{\lambda}\cdot \mathbb{E}\left[ \overline{F}_1\left(v\cdot \max\left\{ \tfrac{c_1^\ast}{A_{11}}, \tfrac{c_2^\ast}{A_{21}}\right\}\right)\right] + \frac{\lambda_2}{\lambda}\cdot \mathbb{E}\left[\overline{F}_2\left(v\cdot \max\left\{ \tfrac{c_1^\ast}{A_{12}}, \tfrac{c_2^\ast}{A_{22}}\right\}\right)\right], 
\end{align*} for all $u>0, ~ v\geq 0$ and $\mathbf{c}^\ast\in\mathbb{R}^2_{>\mathbf{0}}$. Moreover 
\begin{equation*}
	\theta = \int_0^\infty \Pi\left( \RR_{\geq 0}^2 + v \mathbf{c}^*\right) \diff v <\infty.
\end{equation*}
\end{lemma}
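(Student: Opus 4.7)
The strategy is direct computation: unravel each set-theoretic expression, condition on $\bB$ (which selects which of $X_1$ or $X_2$ drives the jump), and then condition on $\bA$ to read off the claimed representations.

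First I would rewrite the sets geometrically. Since $L = \RR^2 \setminus \RR_{\geq 0}^2$ consists of points having at least one negative coordinate, $M = \mathbf{b} - L$ is the set $\{\mathbf{x}\in\RR^2 : x_i > b_i \text{ for some } i \in\{1,2\}\}$. Scaling and translating,
\[
uM + v\mathbf{c}^* = \{\mathbf{x}\in\RR^2 : x_1 > ub_1 + vc_1^* \text{ or } x_2 > ub_2 + vc_2^*\}.
\]
Likewise, $\RR_{\geq 0}^2 + v\mathbf{c}^*$ is the set $\{\mathbf{x} : x_i \geq vc_i^* \text{ for both } i\}$.

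Next, I would compute $\Pi(uM + v\mathbf{c}^*) = \PP(\bA\bB\bX \in uM+v\mathbf{c}^*)$ by conditioning on $\bB$. On the event $\{\bB = \mathrm{diag}(1,0)\}$ (probability $\lambda_1/\lambda$), we have $\bA\bB\bX = (A_{11}X_1, A_{21}X_1)^\top$, so the event $\bA\bB\bX \in uM+v\mathbf{c}^*$ becomes $\{A_{11}X_1 > ub_1+vc_1^*\} \cup \{A_{21}X_1 > ub_2 + vc_2^*\}$. Conditioning on $\bA$ (independent of $X_1$), this is
\[
\PP\!\left(X_1 > \min\!\left\{\tfrac{ub_1+vc_1^*}{A_{11}}, \tfrac{ub_2+vc_2^*}{A_{21}}\right\} \,\Big|\, \bA\right) = \overline{F}_1\!\left(\min\!\left\{\tfrac{ub_1+vc_1^*}{A_{11}}, \tfrac{ub_2+vc_2^*}{A_{21}}\right\}\right),
\]
where the convention $\overline{F}_1(x/0)=0$ handles boundary cases. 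Taking expectation over $\bA$ and adding the analogous $\bB=\mathrm{diag}(0,1)$ contribution reproduces exactly the definition of $g(u,v)$.

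The computation for $\Pi(\RR_{\geq 0}^2 + v\mathbf{c}^*)$ is entirely parallel, except that the two componentwise inequalities are combined by an \emph{and} rather than an \emph{or}; on $\{\bB = \mathrm{diag}(1,0)\}$ the conditional event becomes $\{X_1 \geq \max(vc_1^*/A_{11}, vc_2^*/A_{21})\}$, which yields the stated formula with the maximum in place of the minimum.

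Finally, I would verify the identity for $\theta$ by Fubini. For any positive random variable $\alpha$ independent of $X_j$ and with $\EE[X_j]<\infty$,
\[
\int_0^\infty \EE\!\left[\overline{F}_j(v\alpha)\right]\diff v = \EE\!\left[\alpha^{-1}\int_0^\infty \overline{F}_j(u)\diff u\right] = \EE[X_j]\cdot \EE[\alpha^{-1}],
\]
again using $\overline{F}_j(v/0)=0$ so that $\{\alpha=0\}$ contributes nothing. Applying this with $\alpha = \max\{c_1^*/A_{i1},c_2^*/A_{i2}\}$ so that $\alpha^{-1}=\min\{A_{i1}/c_1^*,A_{i2}/c_2^*\}$ converts the integral of $\Pi(\RR_{\geq 0}^2+v\mathbf{c}^*)$ into the expression for $\theta$ in \eqref{eq_theta_result}. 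Finiteness of $\theta$ is automatic because $\min\{A_{ij}/c_j^*\} \leq 1/\min(c_1^*,c_2^*)$ and $\EE[X_j]<\infty$.

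The only mildly delicate point is bookkeeping around $A_{ij}=0$, but the conventions $1/0=\infty$ and $\overline{F}(x/0)=0$ stated in Section~2.3 make all expressions well-defined and consistent with the probabilistic interpretation (no exceedance can be triggered through a zero coefficient).
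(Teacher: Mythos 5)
Your proposal is correct and takes essentially the same route as the paper: condition on $\bB$ to split into the two claim sources, then on $\bA$ to turn the union (resp.\ intersection) event into $\overline{F}_j$ of a minimum (resp.\ maximum), and use Tonelli together with $\int_0^\infty \overline{F}_j(v\alpha)\diff v=\alpha^{-1}\EE[X_j]$ to identify $\theta$. The only blemish is a harmless index transposition in your choice of $\alpha$ (it should be $\max\{c_1^*/A_{1j},c_2^*/A_{2j}\}$, varying the row index for fixed claim source $j$, to match \eqref{eq_theta_result}).
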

\begin{proof}
	By definition of $M$, $L$ and $\Pi$ and conditioning on $\mathbf{B}$
	\begin{align*}
		\Pi(u M+v \mathbf{c}^*) &= \mathbb{P}\left( \mathbf{A}\mathbf{B}\mathbf{X} \in u \mathbf{b} +v \mathbf{c}^* - u L \right)  = \mathbb{P}\left( \mathbf{A}\mathbf{B}\mathbf{X} \in u  \mathbf{b} +v \mathbf{c}^* + \mathbb{R}^2\backslash\mathbb{R}^2_{\leq \mathbf{0}}\right) \\ 
		&= \frac{\lambda_1}{\lambda} \mathbb{P}\left( \begin{pmatrix}A_{11} \\ A_{21} \end{pmatrix}X_1 \in u  \mathbf{b} +v \mathbf{c}^* + \mathbb{R}^2\backslash\mathbb{R}^2_{\leq \mathbf{0}}\right)  \\ & \qquad +  \frac{\lambda_2}{\lambda} \mathbb{P}\left( \begin{pmatrix}A_{12} \\ A_{22} \end{pmatrix}X_2 \in u  \mathbf{b} +v \mathbf{c}^* + \mathbb{R}^2\backslash\mathbb{R}^2_{\leq \mathbf{0}}\right),
	\end{align*}
	where for $j=1,2$
	\begin{align*}
		\mathbb{P}\bigg( \begin{pmatrix}A_{1j} \\ A_{2j} \end{pmatrix}X_j \in u \mathbf{b} +v \mathbf{c}^* + \mathbb{R}^d\backslash\mathbb{R}^d_{\leq \mathbf{0}}\bigg) 
		&= \mathbb{P}\left( ( A_{1j}X_j > ub_1 + vc_1^* ) \text{ or } (A_{2j}X_j>ub_2 + vc_2^*)\right) \\ 
		&=  \mathbb{P}\left( X_j > \min\left\{ \tfrac{ub_1 + vc_1^*}{A_{1j}}, \tfrac{ub_2 + vc_2^*}{A_{2j}}\right\}\right) \\ 
		&= \mathbb{E}\left[\overline{F}_j\left(\min\left\{ \tfrac{ub_1 + vc_1^*}{A_{1j}}, \tfrac{ub_2 + vc_2^*}{A_{2j}}\right\}\right)\right],
	\end{align*}
	which proves the first equation. The second equation follows by an analogue computation. \\
	Lastly, using the obtained expression for $\Pi(\RR_{\geq 0}^2 + v  \mathbf{c}^\ast)$ we may compute using Tonelli's theorem 
	\begin{align*}
		\lefteqn{\int_0^\infty \Pi(\RR_{\geq 0}^2 + v\cdot \mathbf{c}^\ast) dv }\\
		&= \frac{\lambda_1}{\lambda} \mathbb{E}\left[ \int_0^\infty  \overline{F}_1\left(v\cdot \max\left\{ \tfrac{c_1^*}{A_{11}}, \tfrac{c_2^*}{A_{21}}\right\} \right) \diff v \right] +  \frac{\lambda_2}{\lambda} \mathbb{E}\left[\int_0^\infty  \overline{F}_2\left(v\cdot \max\left\{ \tfrac{c_1^*}{A_{12}}, \tfrac{c_2^*}{A_{22}}\right\} \right)  \diff v\right],
	\end{align*}
where for $j=1,2$
\begin{align*}
	\mathbb{E}\left[ \int_0^\infty  \overline{F}_j\left(v\cdot \max\left\{ \tfrac{c_1^*}{A_{1j}}, \tfrac{c_2^*}{A_{2j}}\right\} \right) \diff v \right]  &= \mathbb{E}\left[ \left(\max\left\{ \tfrac{c_1^*}{A_{1j}}, \tfrac{c_2^*}{A_{2j}}\right\}\right)^{-1} \int_0^\infty  \overline{F}_1\left(y \right) \diff y \right]  \\ 
	&= \mathbb{E}\left[\min\left\{ \tfrac{A_{1j}}{c_1^*} , \tfrac{A_{2j}}{c_2^*} \right\}  \right] \mathbb{E}[X_j].  \end{align*}
This proves $\int_0^\infty \Pi(\RR_{\geq 0}^2 + v  \mathbf{c}^\ast) dv = \theta$. Moreover we note that the finite mean of the claim sizes $X_j$ implies finiteness of  $\int_0^\infty \Pi\left(\RR_{\geq 0}^2 + v \mathbf{c}^*\right) \diff v$.
\end{proof}
\begin{proof}[Proof of Theorem \ref{Theorem_Subexponential_Risk_Or}]
Recall the definition of the set $L$ in \eqref{eq_def_ruinset_A} and note that obviously $L$ satisfies \cite[Assumption 5.1]{Samorodnitsky2016}. Furthermore note that 	$u\mathbf{b}-\mathbf{R}(t) \in L$ if and only if $\max_{i=1,2}( R_i(t) -ub_i )>0$, which immediately implies that 
\begin{align*}
	\Psi_\vee(u) &= \mathbb{P}(u\mathbf{b}-\mathbf{R}(t) \in L \text{ for some }t\geq 0) \\
	&= \mathbb{P}(\mathbf{R}(t) \in uM \text{ for some }t\geq 0), \quad u>0.
\end{align*}
Thus, by \cite[Thm. 5.2]{Samorodnitsky2016} we obtain
\begin{equation} \label{eq_subexpasymphelp} \Psi_\vee(u) \sim \int_0^\infty \Pi(u M + v\mathbf{c^*})\diff v, \end{equation}
as soon as we can guarantee that the probability measure on $\RR^2$ defined by
$$\Pi^I(D) =  \theta^{-1} \int_0^\infty \Pi(D+v\mathbf{c}^\ast) \diff v, \quad \text{for any Borel set }D\subset \RR^2_{\geq 0},$$
and $\Pi^I(\RR^2\setminus \RR_{\geq 0}^2)=0$, is in $\cS_M$. This however is by definition equivalent to the assumption that the cdf
$F_M(u)=1-\Pi^I(u M)$, $u\geq 0$, is in $\cS$. Since by Lemma \ref{Lemma_Pi_explicitely} $F_M(u)=F_{\text{subexp}}(u)$ with $F_{\text{subexp}}$ as given in \eqref{eq_def_cdfF_M}, this is assumed in the theorem. 
Lastly, observe that the right hand side of \eqref{eq_subexpasymphelp} equals $\int_0^\infty g(u,v) \diff v$ as shown in Lemma \ref{Lemma_Pi_explicitely}.
\end{proof}
\begin{remark}
Naively one could guess that subexponentiality of the claims $X_1$ and $X_2$ should be enough to obtain subexponentiality of at least $\Pi$. However, as noted in \cite[Remark 4.9]{Samorodnitsky2016}  this is not true in general, because random mixing of two subexponential distributions (as done by our matrix $\mathbf{B}$) leads to a subexponential distribution if and only if the sum of the mixed distributions is subexponential. This again is not true in general.  
\end{remark}
\begin{proof}[Proof of Lemma \ref{lem-singleruinasymptotic_subexp}]
Fix $i\in\{1,2\}$ and assume that $\PP(A_{i1}+A_{i2}=0)< 1.$ Otherwise $R_i(t)$ is monotonely decreasing, $\Psi_i(u)=0$, and the statement is proven. Note that by definition
\begin{align*}
\Psi_i(u)&= \PP\Bigg(\sum_{k=1}^{N(t)} \big((B_{11})_k (A_{i1})_k X_{1,k} + (B_{22})_k (A_{i2})_k X_{2,k}\big) - t c_i >u \text{ for some }t>0\Bigg)\\
&=: \PP\Bigg(\sum_{k=1}^{N(t)} Y_{i,k} - t c_i >u \text{ for some }t>0\Bigg), \quad i=1,2,
\end{align*}
where the random variables $\{Y_{i,k}, k\in\NN \}$ are i.i.d. copies of two generic random variables $Y_i$, $i=1,2$.  
 The corresponding integrated tail function is defined as 
\begin{equation*}
F^{Y_i}_I(x):= \EE[Y_{i}]^{-1} \int_0^x \PP(Y_{i}>y) \diff y, \quad x\geq 0.
\end{equation*}
From \cite[Thm. X.2.1]{asmussenalbrecher} we obtain that if $F^{Y_i}_I \in \mathcal{S}$
\begin{equation} \label{eq-SEsingleruin}
\lim_{u\to\infty} \frac{\Psi_i(u)}{\overline{F^{Y_i}}_I(u)} = \frac{\lambda \EE[Y_{i}] }{c_i-\lambda \EE[Y_{i}]}, \end{equation}
where 
\begin{equation}\label{eq-ewertY}
\EE[Y_i] = \frac{\lambda_1}{\lambda} \EE[A_{i1}]\EE[X_1] + \frac{\lambda_2}{\lambda}  \EE[A_{i2}]\EE[X_2],\end{equation}
such that
$$\frac{\lambda \EE[Y_{i}] }{c_i-\lambda \EE[Y_{i}]}=\frac{\lambda_1 \EE[A_{i1}]\EE[X_1] + \lambda_2 \EE[A_{i2}]\EE[X_2]}{c_i - \lambda_1 \EE[A_{i1}]\EE[X_1] - \lambda_2 \EE[A_{i2}]\EE[X_2]} 
= \frac{\lambda_1 \EE[A_{i1}]\EE[X_1] + \lambda_2 \EE[A_{i2}]\EE[X_2]}{\lambda c_i^\ast}.$$
Further
\begin{align*}
\int_0^x \PP(Y_{i}>y) \diff y&= \frac{\lambda_1}{\lambda} \int_0^x \PP(A_{i1}X_1>y) \diff y + \frac{\lambda_2}{\lambda} \int_0^x \PP(A_{i2}X_2>y) \diff y,
\end{align*}
and since by Tonelli's theorem for all $i,j\in\{1,2\}$
\begin{align*}
	\int_0^x \PP(A_{ij}X_j>y) \diff y &= \int_0^x \EE\left[\overline{F}_j(\tfrac{y}{A_{ij}})\right] \diff y = \EE\left[\int_0^x \overline{F}_j(\tfrac{y}{A_{ij}}) \diff y\right]	\end{align*}
this proves $F_I^{Y_i}= F_I^{i}\in \cS$. Inserting everything in \eqref{eq-SEsingleruin} we obtain
\begin{align*}
	\Psi_i(u) \sim \frac{\lambda_1 \EE[A_{i1}]\EE[X_1] + \lambda_2  \EE[A_{i2}]\EE[X_2] }{c_i- \lambda_1 \EE[A_{i1}]\EE[X_1] + \lambda_2  \EE[A_{i2}]\EE[X_2]} F_I^i(u),
\end{align*} 
which immediately yields the result by \eqref{eq-safetyloading} via substitution with $v = \tfrac{y-b_iu}{c_i^*}$.
\end{proof}
\begin{proof}[Proof of Proposition \ref{Proposition_Subexponential_Risk_And}]
Combining the asymptotics obtained in Theorem \ref{Theorem_Subexponential_Risk_Or}  and Lemma \ref{lem-singleruinasymptotic_subexp} via \eqref{eq-includeexclude1} we obtain due to \eqref{eq_notAsymptoticEquivalentSW}
\begin{align*}
\Upsilon_{\wedge}(u) &\sim  \frac{\lambda_1}{\lambda} \EE\left[ \int_0^\infty \overline{F}_1\left(\tfrac{b_1 u + v c_1^*}{A_{11}} \right) +  \overline{F}_1\left(\tfrac{b_2 u + v c_2^*}{A_{21}} \right) - \overline{F}_1\left(\min\left\{ \tfrac{ub_1 + vc_1^*}{A_{11}}, \tfrac{ub_2 + vc_2^*}{A_{21}}\right\}\right)  \diff  v \right] \\
 &\quad + \frac{\lambda_2}{\lambda} \EE\left[ \int_0^\infty \overline{F}_2\left(\tfrac{b_1 u + v c_1^*}{A_{12}} \right) +  \overline{F}_2\left(\tfrac{b_2 u + v c_2^*}{A_{22}} \right) - \overline{F}_2\left(\min\left\{ \tfrac{ub_1 + vc_1^*}{A_{12}}, \tfrac{ub_2 + vc_2^*}{A_{22}}\right\}\right)  \diff  v \right],
\end{align*}
where 
\begin{align*}
\lefteqn{\overline{F}_i(f_1(v)) + 	\overline{F}_i(f_2(v)) - 	\overline{F}_i(\min\{f_1(v), f_2(v)\})}\\
 &= 	\overline{F}_i(f_1(v)) + 	\overline{F}_i(f_2(v)) - \max\{\overline{F}_i(f_1(v)), \overline{F}_i(f_2(v)) \}	\\
 &= \min \{\overline{F}_i(f_1(v)), \overline{F}_i(f_2(v)) \}\\
 & = \overline{F}_i(\max \{f_1(v), f_2(v) \})
\end{align*}
as $\overline{F}_i$ is monotonely decreasing. This implies the given asymptotics for $\Psi_\wedge$.\\
	If \eqref{eq_notAsymptoticEquivalent} fails, then $\Psi_1(b_1u)+\Psi_2(b_2u)\sim \Psi_\vee(u)$. Therefore we immediately obtain from \eqref{eq-includeexclude2} that
\begin{equation*}
	\lim_{u\to\infty} \frac{\Psi_\wedge(u)}{\Psi_\vee(u)} = \lim_{u\to\infty} \frac{\Psi_1(b_1u)+ \Psi_2(b_2u)}{\Psi_\vee(u)} -1 = 0. 
\end{equation*}
Thus Theorem \ref{Theorem_Subexponential_Risk_Or} implies \eqref{eq-andvanishesSE} which finishes the proof.
\end{proof}

	\subsection{Proofs for Section \ref{S3RV}}\label{S3c}
	
	We start to prove the first statement of Theorem \ref{Theorem_Asymptotics} which we restate below as Lemma \ref{Lemma_Univ_RV_implies_Multi_RV}. 
	
	\begin{proposition} \label{Proposition_regular_mixing}
		Let $\mathbf{Z}\in\operatorname{MRV}(\alpha,\mu)$ be a random vector in $\mathbb{R}^{d}$ and let $\bM$ be a random $(q\times d)$-matrix independent of $\mathbf{Z}$. Let \begin{equation*} \tilde{\mu}({}\cdot{}):=\mathbb{E}[\mu\circ \bM^{-1}({}\cdot{})], \end{equation*}
		where $\bM^{-1}(\cdot)$ denotes the preimage under $\bM$. If $\mathbb{E}[\nrm{\bM}^\gamma]<\infty$ for some $\gamma>\alpha$ and $\tilde{\mu}(\mathcal{B}_1^c)>0$ then $\bM\mathbf{Z}\in\operatorname{MRV}(\alpha,\mu^*)$ with \begin{equation*}
		\mu^*({}\cdot{}) := \frac{1}{\tilde{\mu}(\mathcal{B}_1^c)} \cdot \tilde{\mu}({}\cdot{}), \end{equation*}
		where $\mathcal{B}_1^c:=\{x\in\mathbb{R}^q: ~ \nrm{x}>1\}$ denotes the complement of the unit sphere in $\mathbb{R}^q$.
	\end{proposition}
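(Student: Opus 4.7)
The plan is to verify the MRV definition for $\bM\mathbf{Z}$ directly. Fix a Borel set $M \subset \overline{\RR}^q \setminus\{\mathbf{0}\}$ that is bounded away from $\mathbf{0}$ and satisfies $\tilde\mu(\partial M)=0$. Conditioning on $\bM$ gives
\begin{equation*}
\frac{\PP(\bM\mathbf{Z} \in tM)}{\PP(\nrm{\mathbf{Z}}>t)} = \EE\!\left[\frac{\PP(\mathbf{Z}\in t\bM^{-1}(M)\,|\,\bM)}{\PP(\nrm{\mathbf{Z}}>t)}\right],
\end{equation*}
and the idea is to pass the limit inside the expectation. For a.e. realization of $\bM$, the preimage $\bM^{-1}(M)$ is a Borel set bounded away from $\mathbf{0}$ (since if $M$ lies outside the ball of radius $\delta$ then $\bM^{-1}(M)$ lies outside the ball of radius $\delta/\nrm{\bM}$), and by continuity of $\bM$ one has $\partial\bM^{-1}(M)\subset\bM^{-1}(\partial M)$. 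The assumption $\tilde\mu(\partial M)=\EE[\mu(\bM^{-1}(\partial M))]=0$ together with non-negativity forces $\mu(\partial\bM^{-1}(M))=0$ almost surely, so applying the MRV of $\mathbf{Z}$ gives pointwise in $\bM$
\begin{equation*}
\frac{\PP(\mathbf{Z}\in t\bM^{-1}(M)\,|\,\bM)}{\PP(\nrm{\mathbf{Z}}>t)} \longrightarrow \mu(\bM^{-1}(M)), \quad t\to\infty.
\end{equation*}

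The key technical step is the domination. Since $M$ is bounded away from $\mathbf{0}$ by some $\delta>0$, we have the inclusion $t\bM^{-1}(M)\subset \{x:\nrm{x}>t\delta/\nrm{\bM}\}$, hence
\begin{equation*}
\frac{\PP(\mathbf{Z}\in t\bM^{-1}(M)\,|\,\bM)}{\PP(\nrm{\mathbf{Z}}>t)} \leq \frac{\PP(\nrm{\mathbf{Z}}>t\delta/\nrm{\bM})}{\PP(\nrm{\mathbf{Z}}>t)}.
\end{equation*}
Since $\nrm{\mathbf{Z}}$ is regularly varying of index $-\alpha$, Potter's bounds yield, for any $\eta>0$ with $\alpha+\eta<\gamma$, a constant $K=K(\eta,\delta)$ and $t_0$ such that for $t\geq t_0$ the right-hand side is bounded by $K(1+\nrm{\bM}^{\alpha+\eta})$. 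The moment assumption $\EE[\nrm{\bM}^\gamma]<\infty$ makes this dominating function integrable, so by dominated convergence
\begin{equation*}
\frac{\PP(\bM\mathbf{Z}\in tM)}{\PP(\nrm{\mathbf{Z}}>t)} \longrightarrow \EE[\mu(\bM^{-1}(M))] = \tilde\mu(M).
\end{equation*}

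Specializing $M=\mathcal{B}_1^c$ (which is bounded away from $\mathbf{0}$, with $\tilde\mu(\partial\mathcal{B}_1^c)=0$ up to potential discontinuities that can be handled by standard approximation) and using the assumption $\tilde\mu(\mathcal{B}_1^c)>0$, the same argument gives
\begin{equation*}
\frac{\PP(\nrm{\bM\mathbf{Z}}>t)}{\PP(\nrm{\mathbf{Z}}>t)} \longrightarrow \tilde\mu(\mathcal{B}_1^c) \in (0,\infty).
\end{equation*}
Dividing the two limits yields $\PP(\bM\mathbf{Z}\in tM)/\PP(\nrm{\bM\mathbf{Z}}>t)\to \mu^*(M)$, establishing property (iii) of the MRV definition. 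Properties (i) and (ii) for $\mu^*$ follow immediately from the corresponding properties of $\mu$ combined with the finite-mean bound on $\nrm{\bM}$ (which guarantees that preimages of sets bounded away from $\mathbf{0}$ remain so with positive probability and yield finite integrated measure via the same Potter argument). The index of regular variation is preserved because $\tilde\mu(tA) = \EE[\mu(t\bM^{-1}(A))] = t^{-\alpha}\tilde\mu(A)$, so $\bM\mathbf{Z}\in\MRV(\alpha,\mu^*)$.

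The main obstacle is the dominated convergence step: without the moment condition $\EE[\nrm{\bM}^\gamma]<\infty$ for $\gamma>\alpha$, the Potter-type bound would not be integrable and the formal pointwise convergence inside the expectation could fail to transfer to the limit. Verifying that $\partial\bM^{-1}(M) \subset \bM^{-1}(\partial M)$ a.s.\ on events where $\bM$ is not injective (rank-deficient realizations) also deserves a careful check, but this is standard since linear maps are continuous regardless of rank.
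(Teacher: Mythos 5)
Your proof is correct, but it takes a genuinely different route from the paper's. The paper's proof is essentially a citation: it invokes Proposition 2.1.18 of Basrak's thesis twice (once for a general set $M$, once for $E=\mathcal{B}_1^c$) and then writes the target ratio as the product $\frac{\PP(\bM\mathbf{Z}\in tM)}{\PP(\nrm{\mathbf{Z}}>t)}\cdot\frac{\PP(\nrm{\mathbf{Z}}>t)}{\PP(\nrm{\bM\mathbf{Z}}>t)}$ --- which is exactly your final division-of-limits step. What you have done is supply a self-contained proof of the cited external result, i.e.\ of the multivariate Breiman-type statement $\PP(\bM\mathbf{Z}\in tM)/\PP(\nrm{\mathbf{Z}}>t)\to\tilde\mu(M)$, via conditioning on $\bM$, the boundary inclusion $\partial\bM^{-1}(M)\subset\bM^{-1}(\partial M)$, and dominated convergence. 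This buys transparency about where each hypothesis enters (in particular that $\EE[\nrm{\bM}^\gamma]<\infty$ with $\gamma>\alpha$ is precisely what makes the dominating function integrable), at the cost of two points that deserve to be spelled out. First, Potter's bound gives $\PP(\nrm{\mathbf{Z}}>t\delta/\nrm{\bM})/\PP(\nrm{\mathbf{Z}}>t)\leq K\nrm{\bM}^{\alpha+\eta}$ only when $t\delta/\nrm{\bM}$ also exceeds the Potter threshold $t_0$; on the complementary event $\{\nrm{\bM}>t\delta/t_0\}$ one bounds the numerator by $1$ and uses that $t^{\alpha+\eta}\PP(\nrm{\mathbf{Z}}>t)\to\infty$ to recover a bound of the same form for $t$ large. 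This two-regime argument is standard but is exactly where $\gamma>\alpha$ (rather than $\gamma=\alpha$) is used, so it should not be compressed into a bare appeal to Potter. Second, $\tilde\mu(\partial\mathcal{B}_1^c)=0$ need not be handled by an unspecified ``standard approximation'': it follows from the homogeneity $\tilde\mu(tA)=t^{-\alpha}\tilde\mu(A)$ that you establish at the end, together with $\tilde\mu(\mathcal{B}_1^c)<\infty$, since $\tilde\mu(\{\nrm{x}\geq 1\})=\lim_{r\uparrow 1}\tilde\mu(\{\nrm{x}>r\})=\lim_{r\uparrow 1}r^{-\alpha}\tilde\mu(\mathcal{B}_1^c)=\tilde\mu(\mathcal{B}_1^c)$. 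With these two refinements made explicit, your argument is a complete and more elementary alternative to the paper's reference-based proof.
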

	\begin{proof}
		First note that our definition of regular variation corresponds to Definition 2.16 (Theorem 2.1.4 (i)) in \cite{Basrack2000}, setting $E=\mathcal{B}_1^c$, which implies $\mathbb{P}(\mathbf{Z} \in tE) = \mathbb{P}(\nrm{\mathbf{Z}}>t)$. Now double application of \cite[Proposition 2.1.18]{Basrack2000} implies the statement, since for $M\subseteq\RR^2$ measurable and bounded away from $\mathbf{0}$ 
		\begin{equation*}
		\frac{\mathbb{P}(\mathbf{M}\mathbf{Z}\in tM)}{\PP(\nrm{\mathbf{M}\mathbf{Z}}>t)} = \underbrace{\frac{\mathbb{P}(\mathbf{M}\mathbf{Z}\in tM)}{\mathbb{P}(\nrm{\mathbf{Z}}>t)}}_{\to \tilde{\mu}(M)}\cdot \underbrace{ \frac{\mathbb{P}(\nrm{\mathbf{Z}}>t)}{\mathbb{P}(\mathbf{M}\mathbf{Z}\in t\mathcal{B}_1^c)}}_{\to\tilde{\mu}(\mathcal{B}_1^c)^{-1}}. \qedhere \end{equation*} 
	\end{proof}

	\begin{lemma} \label{Lemma_Univ_RV_implies_Multi_RV}
		Consider the notation of Section \ref{S2}. If $X_1$ and $X_2$ are regularly varying in the univariate sense with indices $\alpha_1,\alpha_2$, then there exists a measure $\mu^*$ as in Proposition \ref{Proposition_regular_mixing} such that $\bA\bB\mathbf{X}\in\operatorname{MRV}(\min\{\alpha_1,\alpha_2\},\mu^*)$.
	\end{lemma}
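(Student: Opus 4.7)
The plan is to apply Proposition \ref{Proposition_regular_mixing} with $\mathbf{M} := \mathbf{A}\mathbf{B}$ (a random $2\times 2$ matrix, independent of $\mathbf{X}$) and $\mathbf{Z} := \mathbf{X} = (X_1,X_2)^\top$. This requires three ingredients: multivariate regular variation of $\mathbf{X}$ itself, a moment bound for $\mathbf{A}\mathbf{B}$, and non-triviality of the candidate limit measure $\tilde\mu$.

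First I would establish that $\mathbf{X} \in \operatorname{MRV}(\alpha, \mu_{\mathbf{X}})$ with $\alpha := \min\{\alpha_1,\alpha_2\}$. Since $X_1\in \operatorname{RV}(\alpha_1)$ and $X_2 \in \operatorname{RV}(\alpha_2)$ are independent, this is classical (see e.g.\ \cite{resnick}) and the limit measure $\mu_{\mathbf{X}}$ is concentrated on the coordinate axes: on the positive $x_1$-axis alone if $\alpha_1<\alpha_2$, on the positive $x_2$-axis alone if $\alpha_2<\alpha_1$, and on both axes when $\alpha_1=\alpha_2$ (with masses dictated by the tail constants $\lim_{t\to\infty} t^{\alpha}\overline{F}_j(t)$).

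Next, because every entry of $\mathbf{A}$ lies in $[0,1]$ and $\mathbf{B}$ takes only the two idempotent values specified in Section \ref{S2b}, the product $\mathbf{A}\mathbf{B}$ has entries in $[0,1]$; hence $\|\mathbf{A}\mathbf{B}\|$ is bounded by a deterministic constant and $\mathbb{E}[\|\mathbf{A}\mathbf{B}\|^\gamma]<\infty$ for every $\gamma>0$, in particular for some $\gamma>\alpha$. For the non-triviality $\tilde\mu(\mathcal{B}_1^c)>0$ I would condition on $\mathbf{B}$: on the event that $\mathbf{B}$ selects the $j$-th coordinate one has $\mathbf{A}\mathbf{B}\mathbf{z}=z_j(A_{1j},A_{2j})^\top$, whose $L^1$-norm equals $|z_j|$ because the $j$-th column of $\mathbf{A}$ sums to $1$. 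Choosing $j$ so that $\alpha_j=\alpha$ and using that $\mu_{\mathbf{X}}$ charges $\{z_j>1\}$ then yields
$$\tilde\mu(\mathcal{B}_1^c) \;\geq\; \tfrac{\lambda_j}{\lambda}\cdot\mu_{\mathbf{X}}(\{\mathbf{z}\colon z_j>1\}) \;>\;0.$$

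Proposition \ref{Proposition_regular_mixing} then applies verbatim and delivers $\mathbf{A}\mathbf{B}\mathbf{X}\in\operatorname{MRV}(\min\{\alpha_1,\alpha_2\},\mu^*)$ with $\mu^*:=\tilde\mu/\tilde\mu(\mathcal{B}_1^c)$. The main obstacle will be the first step: cleanly invoking the classical result on MRV for independent components with possibly different tail indices, and pinning down the support of $\mu_{\mathbf{X}}$ so that the non-triviality check becomes transparent. Once this is in place, the remaining steps are essentially mechanical, relying only on the column-stochasticity of $\mathbf{A}$ and the deterministic boundedness of $\mathbf{A}\mathbf{B}$.
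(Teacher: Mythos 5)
Your proposal is correct and follows essentially the same route as the paper: both reduce the claim to Proposition \ref{Proposition_regular_mixing} with $\mathbf{M}=\mathbf{A}\mathbf{B}$, invoke the classical fact that independent regularly varying coordinates give $\mathbf{X}\in\operatorname{MRV}(\min\{\alpha_1,\alpha_2\},\mu)$ with $\mu$ concentrated on the axes, note that $\|\mathbf{A}\mathbf{B}\|$ is deterministically bounded, and verify $\tilde\mu(\mathcal{B}_1^c)>0$ by conditioning on $\mathbf{B}$ and using that the columns of $\mathbf{A}$ sum to one. The only cosmetic difference is that the paper computes $\tilde\mu(\mathcal{B}_1^c)$ exactly as $\tfrac{\lambda_1}{\lambda}\mu((1,\infty)\times\RR)+\tfrac{\lambda_2}{\lambda}\mu(\RR\times(1,\infty))$ while you lower-bound it by the single term corresponding to the heavier tail; both suffice.
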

	\begin{proof}
		Obviously  $\mathbf{X}=(X_1,X_2)\in\operatorname{MRV}(\alpha,\mu)$ for some non-null measure $\mu$ concentrated on the axes, and $\alpha=\min(\alpha_1,\alpha_2)$ since the random variables $X_1,X_2$ are independent and both regularly varying with indices $\alpha_1,\alpha_2$. To prove the Lemma it is thus enough to check the prerequisites of Proposition \ref{Proposition_regular_mixing}. Clearly, using the properties of $\bA$ and $\bB$ we compute   $\EE[\|\bA\bB\|^\gamma]=1<\infty$ for any $\gamma$. Further for $M\subseteq \RR^2$ measurable and bounded away from~$\mathbf{0}$ 
		\begin{align*}
		\tilde{\mu}(M) =&\mathbb{E}[\mu\circ (\bA\bB)^{-1}(M)] 
		= \mathbb{E}\left[\mu\left(\left\{\mathbf{x}\in\mathbb{R}^2: ~ \bA\bB\mathbf{x}\in M\right\}\right)\right] \\ 
		=& \frac{\lambda_1}{\lambda}\cdot\mathbb{E}\left[\mu\left(\left\{\mathbf{x}=(x_1,x_2)\in\mathbb{R}^2: ~ \begin{pmatrix} A_{11} x_1 \\ A_{21}x_1\end{pmatrix}\in M\right\}\right)\right] \\ &+\frac{\lambda_2}{\lambda}\cdot\mathbb{E}\left[\mu\left(\left\{\mathbf{x}=(x_1,x_2)\in\mathbb{R}^2: ~ \begin{pmatrix} A_{12} x_2 \\ A_{22}x_2\end{pmatrix}\in M\right\}\right)\right]. 
		\end{align*}
		Thus for $M=\mathcal{B}_1^c$ and recalling property (ii) of the matrix $\bA$ we obtain
		\begin{align*}
		\lefteqn{\tilde{\mu}(\mathcal{B}_1^c)}\\ =&\frac{\lambda_1}{\lambda} \cdot\mathbb{E}[\mu(\{\mathbf{x}=(x_1,x_2)\in\mathbb{R}^2: ~ |x_1|>1\})] 
		+\frac{\lambda_2}{\lambda} \cdot\mathbb{E}[\mu(\{\mathbf{x}=(x_1,x_2)\in\mathbb{R}^2: ~ |x_2|>1\})] \\
		=& \frac{\lambda_1}{\lambda} \cdot \mu((1,\infty)\times\mathbb{R}) + \frac{\lambda_2}{\lambda}\cdot\mu(\mathbb{R}\times(1,\infty)) 
		>0,
		\end{align*}
		where we have used that, due to positivity of $\bX$, $\mu$ is zero on $\RR^2\backslash \RR_{>0}^2$. This finishes the proof.
	\end{proof}
	
	To prove the remainder of Theorem \ref{Theorem_Asymptotics} we will use a result from \cite{Hult2005}. To do so, first recall the bivariate compound Poisson process $\mathbf{R}$ from our dual risk model from Section \ref{S2b}. Let $(T_k)_{k\in\mathbb{N}}$ be the independent identically $\operatorname{Exp}(\lambda)$-distributed interarrival times of the Poisson process $N(t)$, i.e. 
	\begin{equation*}
	N(t) = \sum_{n=1}^\infty \mathds{1}_{\{\sum_{k=1}^n T_k\leq t\}}.
	\end{equation*}
	We define the random walk 
	\begin{equation}\label{eq-def-I}
	\mathbf{S}_n := \sum_{k=1}^n\left(\bA_k\bB_k\mathbf{X}_k-T_k \mathbf{c}\right) + n\cdot\left(\mathbb{E}[T_1]\mathbf{c}-\mathbb{E}[\bA\bB\mathbf{X}]\right),
	\end{equation}
	and directly observe that $(\mathbf{S}_n)_{n\in\NN}$ is compensated, i.e. for all $n\in\NN$
	\begin{align}\label{eq-I-compensate}
	\mathbb{E}[\mathbf{S}_n]
	&= \sum_{k=1}^n \left(\mathbb{E}[\bA_k\bB_k\mathbf{X}_k]-\mathbb{E}[T_k \mathbf{c}]\right) + n\cdot\mathbb{E}[T_1]\mathbf{c}-n \cdot \mathbb{E}[\bA\bB\mathbf{X}] 
	=\mathbf{0}.
	\end{align}
	
	The following Lemma explains the relationship between the risk process $(\mathbf{R}(t))_{t\geq 0}$ and the random walk $(\mathbf{S}_n)_{n\in\NN}$.

	\begin{lemma} \label{Lemma_Ruin_Random_Walk}
		Let $L\subseteq \RR^2$ be a \emph{ruin set}, i.e. assume that
		\begin{enumerate}[(i)]
			\item $L\backslash \mathbb{R}_{<0}^2=$, i.e., $L\cap \mathbb{R}_{<0}^2=\emptyset$, and
			\item  $uL=L$ for all $u>0$. 
		\end{enumerate}
		Then 
		\begin{align*}
		\Psi_L(u) :=&\mathbb{P}\big(\mathbf{S}_n - n\left(\lambda^{-1}\mathbf{c}-\mathbb{E}[\bA\bB\mathbf{X}]\right) \in u(\mathbf{b}+L) \text{ for some }n\in\mathbb{N} \big) \\ =& \mathbb{P}\big(\mathbf{R}(t)- u\mathbf{b} \in L \text{ for some }t\geq 0 \big).
		\end{align*}
	\end{lemma}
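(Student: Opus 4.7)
The plan is to reduce the identity to a pathwise equality of events and then take probabilities. First, plugging the definition \eqref{eq-def-I} into $\mathbf{S}_n - n(\lambda^{-1}\mathbf{c}-\mathbb{E}[\mathbf{ABX}])$ and using $\mathbb{E}[T_1]=1/\lambda$, the compensation terms cancel, leaving
\[
\mathbf{S}_n - n\bigl(\lambda^{-1}\mathbf{c}-\mathbb{E}[\mathbf{ABX}]\bigr) \;=\; \sum_{k=1}^n \bigl(\mathbf{A}_k\mathbf{B}_k\mathbf{X}_k - T_k\mathbf{c}\bigr) \;=\; \mathbf{R}(\tau_n),
\]
where $\tau_n := \sum_{k=1}^n T_k$ is the $n$-th arrival time of $N$. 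Moreover, by the cone property (ii) one has $u(\mathbf{b}+L) = u\mathbf{b} + uL = u\mathbf{b} + L$, so the event defining $\Psi_L(u)$ becomes $\{\exists n\in\mathbb{N}:\mathbf{R}(\tau_n)-u\mathbf{b}\in L\}$. The claim therefore reduces to the pathwise identity
\[
\{\exists n\in\mathbb{N}:\mathbf{R}(\tau_n)-u\mathbf{b}\in L\} \;=\; \{\exists t\geq 0:\mathbf{R}(t)-u\mathbf{b}\in L\}.
\]

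The inclusion $\subseteq$ is trivial, since each $\tau_n$ is an admissible choice of $t$. For the reverse inclusion I fix a sample path and assume that $\mathbf{R}(t_0)-u\mathbf{b}\in L$ for some $t_0\geq 0$. Setting $\tau_0:=0$ and choosing $n$ with $t_0\in[\tau_n,\tau_{n+1})$, the fact that $\mathbf{R}$ is affine with slope $-\mathbf{c}$ between jumps yields
\[
\mathbf{R}(\tau_n)-u\mathbf{b} \;=\; \bigl(\mathbf{R}(t_0)-u\mathbf{b}\bigr) + (t_0-\tau_n)\mathbf{c},
\]
i.e.\ $\mathbf{R}(\tau_n)-u\mathbf{b}$ differs from the known element of $L$ by the non-negative vector $(t_0-\tau_n)\mathbf{c}\in\mathbb{R}^2_{\geq 0}$. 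The inclusion $\mathbf{R}(\tau_n)-u\mathbf{b}\in L$ then follows from the increasing character of a ruin set ($L+\mathbb{R}^2_{\geq 0}\subseteq L$), which is the implicit content of the "ruin set" convention of \cite{Hult2005} and is manifestly satisfied by the sets $L_\vee = \mathbb{R}^2\setminus\mathbb{R}^2_{\leq 0}$ and $L_{\wedge,\text{sim}}=\mathbb{R}^2_{>0}$ to which the lemma will be applied. The case $n=0$ is excluded by hypothesis (i): there $\mathbf{R}(t_0)-u\mathbf{b} = -t_0\mathbf{c}-u\mathbf{b}\in\mathbb{R}^2_{<0}$, which contradicts $L\cap\mathbb{R}^2_{<0}=\emptyset$. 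Hence $n\geq 1$, and the reverse inclusion is proved.

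The one step that carries any content is the reverse inclusion, whose substance is the interplay between the deterministic decay of $\mathbf{R}$ between jumps and the upward closedness of $L$. Everything else is algebra (identifying the random walk with the embedded process at jump times) or a direct consequence of the scaling property $uL=L$. After the pathwise identification of the two events, the assertion follows by taking probabilities.
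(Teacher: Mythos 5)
Your proof is correct and follows essentially the same route as the paper's: identify the compensated walk at the arrival times $\tau_n$ with $\mathbf{R}(\tau_n)$, and use the downward drift $-\mathbf{c}$ between jumps to argue that the set can only be reached at a jump epoch, with hypothesis (i) ruling out the pre-first-jump segment. Your version is in fact more careful than the paper's terse "may enter $L$ only by a jump": you correctly observe that this step needs the upward closedness $L+\mathbb{R}^2_{\geq 0}\subseteq L$, which does not follow from (i)--(ii) alone but does hold for $L_\vee$ and $L_{\wedge,\text{sim}}$, the only sets to which the lemma is applied.
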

	\begin{proof}
		Recall from \eqref{eq-def-R} that $\mathbf{R}(t) = \sum_{k=1}^{N(t)} \bA_k\bB_k\mathbf{X}_k - t\mathbf{c}$ where $\mathbf{c}=(c_1,c_2)^\top \in\RR_{\geq 0}^2$. Thus by assumption {\it (i)} $\bR(t)$ may enter $L$ only by a jump and since $N(t)\overset{t\nearrow\infty}{\longrightarrow}\infty$ a.s. we get
		\begin{align*}
		\lefteqn{\{\mathbf{R}(t) -u\mathbf{b}\in L\text{ for some }t\geq 0  \}}\\
		&= \left\{\sum_{k=1}^{N(t)}\bA_k\bB_k\mathbf{X}_k-t\mathbf{c} \in u\mathbf{b}+ L\text{ for some }t\geq 0 \right\} \\
		&=\left\{\sum_{k=1}^{n} (\bA_k\bB_k\mathbf{X}_k- T_k\mathbf{c}) \in u(\mathbf{b}+ L)\text{ for some } n\in\mathbb{N} \right\}\\
		&= \left\{\sum_{k=1}^{n} (\bA_k\bB_k\mathbf{X}_k- T_k\mathbf{c}) +(n-n)\left(\lambda^{-1}\mathbf{c}-\mathbb{E}[\bA\bB\mathbf{X}]\right) \in u(\mathbf{b}+L) \text{ for some }n\in\mathbb{N}\right\} \\
		&= \{\mathbf{S}_n - n\left(\lambda^{-1}\mathbf{c}-\mathbb{E}[\bA\bB\mathbf{X}]\right) \in u(\mathbf{b}+L) \text{ for some }n\in\mathbb{N}\},
		\end{align*}
		which yields the claim.
	\end{proof}
	
	We proceed with a Lemma that specifies the ruin sets that we are interested in.
	
	\begin{lemma} \label{Lemma_Ruin_Sets}
		Let \begin{align*}
		L_\vee &:= \{(x_1,x_2)\in\mathbb{R}^2: ~ x_1>0 \vee x_2>0\} = \RR^2\backslash \RR_{\leq 0}^2, \\ 
		\text{and} \quad L_{\wedge,\text{sim}} &:= \{(x_1,x_2)\in\mathbb{R}^2: ~ x_1>0 \wedge x_2>0\}= \RR_{>0}^2,
		\end{align*}
		then
		\begin{align*}
		\Psi_{L_\vee} (u) = \Psi_\vee(u), \quad 
		\text{and} \quad 	\Psi_{L_{\wedge,\text{sim}}} (u) &= \Psi_{\wedge,\text{sim}}(u), \quad u>0.
		\end{align*}
	\end{lemma}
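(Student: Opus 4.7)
The statement is essentially a translation exercise: once Lemma~\ref{Lemma_Ruin_Random_Walk} is available, the two identities follow from unwinding definitions. The plan therefore has only two substantive steps: verifying that $L_\vee$ and $L_{\wedge,\text{sim}}$ qualify as ruin sets in the sense of Lemma~\ref{Lemma_Ruin_Random_Walk}, and then rewriting the set-membership events in terms of the max/min of the coordinates.

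First I would check the two defining properties of a ruin set for each of $L_\vee$ and $L_{\wedge,\text{sim}}$. For condition (i), any point in $\RR_{<0}^2$ has both coordinates strictly negative, so it satisfies neither $x_1>0$ nor $x_2>0$; hence both $L_\vee\cap\RR_{<0}^2=\emptyset$ and $L_{\wedge,\text{sim}}\cap\RR_{<0}^2=\emptyset$. For condition (ii), both sets are positively homogeneous cones (being defined by strict inequalities with zero right-hand sides), so $uL=L$ for every $u>0$. Consequently Lemma~\ref{Lemma_Ruin_Random_Walk} applies and yields
\begin{equation*}
\Psi_L(u)=\PP\bigl(\bR(t)-u\bb\in L\text{ for some }t\geq 0\bigr)
\end{equation*}
for $L\in\{L_\vee,L_{\wedge,\text{sim}}\}$.

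Second, I would translate the set-membership event back into the form used in \eqref{eq-orruin} and in the definition of $\Psi_{\wedge,\text{sim}}$. By the very definition of $L_\vee$, the event $\{\bR(t)-u\bb\in L_\vee\}$ is the event $\{R_1(t)-b_1u>0\}\cup\{R_2(t)-b_2u>0\}$, which is precisely $\{\max_{i=1,2}(R_i(t)-b_iu)>0\}$. Analogously, since $L_{\wedge,\text{sim}}=\RR_{>0}^2$, the event $\{\bR(t)-u\bb\in L_{\wedge,\text{sim}}\}$ equals $\{R_1(t)-b_1u>0\}\cap\{R_2(t)-b_2u>0\}=\{\min_{i=1,2}(R_i(t)-b_iu)>0\}$. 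Taking the union over $t$ on both sides gives the desired identities, modulo only the discrepancy between the quantifier $t\geq 0$ and the quantifier $t>0$ used in \eqref{eq-orruin}.

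Finally I would dispose of that quantifier mismatch. At $t=0$ we have $\bR(0)=\mathbf{0}$, hence $\bR(0)-u\bb=-u\bb\in\RR_{<0}^2$ whenever $u>0$, and by condition (i) verified above this point lies in neither $L_\vee$ nor $L_{\wedge,\text{sim}}$. So including or excluding $t=0$ does not change the event, and the two identities follow. I do not expect any obstacle here; the only thing to keep an eye on is that strict positivity of $u$ and of $b_1,b_2$ is used in this last step.
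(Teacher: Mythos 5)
Your proof is correct and follows essentially the same route as the paper, which simply identifies the event $\{\mathbf{R}(t)-u\mathbf{b}\in L\}$ with the max/min formulation and calls it clear; the ruin-set conditions you verify are checked separately in the paper (Lemma \ref{lem-checkconditions}) and declared obvious there. Your extra care with the $t=0$ versus $t>0$ quantifier is a harmless refinement that the paper glosses over.
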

	
	\begin{proof}
		Clearly 
		$$\mathbb{P}\big(\mathbf{R}(t) - u\mathbf{b}\in  L_\vee  \text{ for some }t\geq 0\big)  = \mathbb{P}\left(\max_{i=1,2}(R_i(t)-u_i) >0 \text{ for some }t\geq 0\right)$$
		which is $\Psi_{L_\vee} (u) = \Psi_\vee(u)$.
		The second equality follows analogously.
	\end{proof}

	\begin{proposition} \label{Proposition_limit}
		Let the claim size variables $X_1,X_2$ be regularly varying, i.e. $X_{j} \in \operatorname{RV}(\alpha_j)$ for $\alpha_j>1$. Then $\bA\bB\mathbf{X}\in\operatorname{MRV}(\min(\alpha_1,\alpha_2),\mu^*)$ for a suitable measure $\mu^*$. Further, recall $\mathbf{c}^*=(c_1^*,c_2^*)^\top\in \mathbb{R}^2_{>0}$ from \eqref{eq-safetyloading}. Let $L\subseteq\mathbb{R}^2$ be a ruin set in the sense of  Lemma \ref{Lemma_Ruin_Random_Walk} and assume additionally:
		\begin{itemize}
			\item[(iii)] For all $\mathbf{a}\in\mathbb{R}^2_{>0}$  \begin{equation*}
			\mu^*(\partial(\mathbf{a}+L))=0.
			\end{equation*}
			\item[(iv)] The set $\mathbf{b}+L$ is $\mathbf{p}$-increasing for all $\mathbf{p}\in\mathbb{R}^2_{>0}$, i.e., for all $v\geq 0$ it holds that 
			\begin{equation*} 
			\mathbf{x} \in \mathbf{b}+ L \quad \text{implies} \quad \mathbf{x} +v\mathbf{p} \in \mathbf{b}+ L. \end{equation*}
		\end{itemize} 
		Then
		\begin{align*}
		\lim_{u\to\infty}\frac{\Psi_L(u)}{u\cdot\mathbb{P}(\nrm{\bA\bB\mathbf{X}}>u)}  &= \int_0^\infty \mu^*(v\mathbf{c^*}+\mathbf{b}+L)\diff v.
		\end{align*}
	\end{proposition}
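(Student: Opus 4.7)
The plan is to combine Lemma~\ref{Lemma_Ruin_Random_Walk} with a first-passage result for compensated multivariate regularly varying random walks of the type developed in \cite{Hult2005}. By \eqref{eq-safetyloading} one computes directly that $\lambda^{-1}\mathbf{c} - \mathbb{E}[\bA\bB\bX] = \mathbf{c}^*$, so Lemma~\ref{Lemma_Ruin_Random_Walk} yields
\begin{equation*}
\Psi_L(u) = \mathbb{P}\bigl(\mathbf{S}_n - n\mathbf{c}^* \in u(\mathbf{b}+L) \text{ for some } n\in\mathbb{N}\bigr).
\end{equation*}
Since $L$ is a ruin set, $uL = L$ for $u>0$, hence $u(\mathbf{b}+L) = u\mathbf{b}+L$. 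Thus $\Psi_L(u)$ is the probability that the centered random walk $(\mathbf{S}_n)_{n\in\NN}$, with per-step negative drift $-\mathbf{c}^*$, first enters the fixed-shape target $u\mathbf{b}+L$.

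Next, I would establish multivariate regular variation of the generic increment $Z := \bA\bB\bX - T\mathbf{c} + (\mathbb{E}[T]\mathbf{c} - \mathbb{E}[\bA\bB\bX])$ of $\mathbf{S}_n$, cf.\ \eqref{eq-def-I}--\eqref{eq-I-compensate}. Lemma~\ref{Lemma_Univ_RV_implies_Multi_RV} gives $\bA\bB\bX\in\operatorname{MRV}(\min\{\alpha_1,\alpha_2\},\mu^*)$, and since $T\mathbf{c}$ is exponentially light-tailed, the deterministic centering and the subtraction of $T\mathbf{c}$ do not affect the tail asymptotics. Consequently $Z$ is multivariate regularly varying with the same index and limit measure $\mu^*$, and $\mathbb{P}(\nrm{Z}>u) \sim \mathbb{P}(\nrm{\bA\bB\bX}>u)$ as $u\to\infty$.

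The core step is then to invoke a multivariate Pakes--Veraverbeke-type result from \cite{Hult2005} which, for a compensated random walk with multivariate regularly varying i.i.d.\ increments and a deterministic negative drift $-\mathbf{c}^*$, asserts that for a target $A\subset\RR^2$ bounded away from $\mathbf{0}$, $\mathbf{c}^*$-increasing, and with $\mu^*(\partial(v\mathbf{c}^*+A))=0$ for all $v\geq 0$,
\begin{equation*}
\mathbb{P}\bigl(\mathbf{S}_n - n\mathbf{c}^* \in uA \text{ for some } n\in\NN\bigr) \sim u\,\mathbb{P}(\nrm{Z}>u) \int_0^\infty \mu^*(v\mathbf{c}^* + A)\,\diff v
\end{equation*}
as $u\to\infty$. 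I would apply this with $A = \mathbf{b}+L$: the boundedness away from $\mathbf{0}$ holds because $\mathbf{b}\in\RR_{>0}^2$ and $L\cap \RR_{\leq 0}^2 = \emptyset$, the boundary null condition is exactly assumption (iii) applied with $\mathbf{a} = \mathbf{b} + v\mathbf{c}^*$, and the $\mathbf{c}^*$-monotonicity is precisely assumption (iv). Combining this with the tail equivalence $\mathbb{P}(\nrm{Z}>u)\sim\mathbb{P}(\nrm{\bA\bB\bX}>u)$ yields the claim.

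The main obstacle will be the careful verification that the technical hypotheses of the \cite{Hult2005} first-passage theorem match the regularity conditions (iii)--(iv) in our formulation, in particular that the $\mu^*$-null boundary requirement propagates uniformly over $v\geq 0$ and that the monotonicity in $\mathbf{c}^*$ alone suffices (rather than monotonicity in every $\mathbf{p}\in\RR_{>0}^2$, which (iv) provides as a safety margin). A secondary point, needed for the finiteness claim $C_{\vee}, C_{\wedge,\mathrm{sim}} < \infty$, is that since $\mu^*$ is homogeneous of order $-\min(\alpha_1,\alpha_2)$ with $\min(\alpha_1,\alpha_2)>1$ and $\mathbf{b}+L$ is bounded away from the origin, $v\mapsto \mu^*(v\mathbf{c}^*+\mathbf{b}+L)$ is integrable at infinity (polynomial decay of integrable order) and bounded near $v=0$, so the limit integral is finite.
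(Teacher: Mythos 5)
Your proposal is correct and follows essentially the same route as the paper: rewrite $\Psi_L(u)$ via Lemma~\ref{Lemma_Ruin_Random_Walk} as a first-passage probability of the compensated random walk with i.i.d.\ increments $\mathbf{Y}_k=\bA_k\bB_k\mathbf{X}_k-T_k\mathbf{c}+\mathbf{c}^*$ and deterministic drift $-\mathbf{c}^*$, then apply \cite[Thm.~3.1 and Rem.~3.2]{Hult2005} with the target set $\mathbf{b}+L$, whose hypotheses are exactly conditions (i)--(iv). Your additional observations (that $\lambda^{-1}\mathbf{c}-\mathbb{E}[\bA\bB\bX]=\mathbf{c}^*$, and that the light-tailed term $T_k\mathbf{c}$ does not alter the limit measure or the tail normalization) are details the paper leaves implicit, and they are handled correctly.
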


	\begin{proof}
		That $\bA\bB\mathbf{X}\in \operatorname{RV}(\min(\alpha_1,\alpha_2),\mu^*)$ has been shown in Lemma \ref{Lemma_Univ_RV_implies_Multi_RV}. Recalling the definitions of $\mathbf{S}_n$ and $\Psi_L(u)$ we may write
		\begin{align*}
		\Psi_L(u) &= \mathbb{P}\left(\mathbf{S}_n - n\left(\lambda^{-1}\mathbf{c}-\mathbb{E}[\bA\bB\mathbf{X}]\right) \in u(\mathbf{b}+L) \text{ for some }n\in\mathbb{N} \right) \\
		&= \mathbb{P}\left(\sum_{k=1}^n\mathbf{Y}_k - n\mathbf{c}^*\in u(\mathbf{b}+L) \text{ for some }n\in\mathbb{N} \right),
		\end{align*} for i.i.d. random vectors  \begin{align*}
		\mathbf{Y}_k &= \bA_k\bB_k\mathbf{X}_k-T_k \mathbf{c} + \lambda^{-1}\mathbf{c}-\mathbb{E}[\bA\bB\mathbf{X}].
		\end{align*}
		All the other prerequisites ensure that we may apply \cite[Thm. 3.1 and Rem. 3.2]{Hult2005} to obtain the desired asymptotics.
	\end{proof}
	
	The following Lemma justifies the usage of Proposition \ref{Proposition_limit} for our problem.
	
	\begin{lemma}\label{lem-checkconditions}
		The sets $L_\vee$ and $L_{\wedge,\text{sim}}$ from Lemma~\ref{Lemma_Ruin_Sets} satisfy conditions (i)-(iv) of Lemma~\ref{Lemma_Ruin_Random_Walk} and Proposition~\ref{Proposition_limit}.
	\end{lemma}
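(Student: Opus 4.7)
The plan is to verify each of the four conditions separately for both ruin sets. Three of them, namely (i), (ii), and (iv), reduce to elementary set-theoretic observations, while (iii) is the substantive point and requires unpacking the structure of $\mu^*$.

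For (i), any $\mathbf{x}\in\RR_{<0}^2$ has both coordinates strictly negative, so neither the disjunction $x_1>0\vee x_2>0$ defining $L_\vee$ nor the conjunction $x_1>0\wedge x_2>0$ defining $L_{\wedge,\text{sim}}$ can hold; hence both sets are disjoint from $\RR_{<0}^2$. For (ii), multiplication by a positive scalar preserves signs componentwise, so both $L_\vee$ and $L_{\wedge,\text{sim}}$ are cones. For (iv), if $\mathbf{x}\in\mathbf{b}+L_\vee$, i.e.\ $x_1>b_1$ or $x_2>b_2$, then adding $v\mathbf{p}$ for $v\geq 0$ and $\mathbf{p}\in\RR_{>0}^2$ only increases each coordinate, so the same strict inequality persists; the analogous argument covers $\mathbf{b}+L_{\wedge,\text{sim}}$.

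For (iii) I would first observe that for any $\mathbf{a}\in\RR_{>0}^2$, $\partial(\mathbf{a}+L_\vee)$ and $\partial(\mathbf{a}+L_{\wedge,\text{sim}})$ are each contained in the union of the two affine lines $\{y_1=a_1\}$ and $\{y_2=a_2\}$, neither of which passes through the origin. The plan is then to invoke the description of $\mu^*$ from the proof of Lemma~\ref{Lemma_Univ_RV_implies_Multi_RV}: starting from the measure $\mu$ of $\mathbf{X}=(X_1,X_2)^\top$, which (by independence of the components together with regular variation) is concentrated on the positive halves of the two coordinate axes and is absolutely continuous along each axis with density proportional to $x_j^{-\alpha_j-1}$, the limit measure $\mu^*$ is a normalized average of the pushforwards $\mu\circ(\bA\bB)^{-1}$. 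Concretely, conditional on $\bA,\bB$, this pushforward puts its mass on the rays $\{(A_{1j}t,A_{2j}t)^\top:t>0\}$, $j=1,2$, i.e.\ on lines through the origin. Any such ray meets a line not through the origin in at most one point, and under absolute continuity of $\mu$ along the generating axis the pushforward on the ray remains atomless on $(0,\infty)$, so that one-point intersection has measure zero. Integrating over $\bA,\bB$ gives $\tilde\mu(\partial(\mathbf{a}+L))=0$ and hence $\mu^*(\partial(\mathbf{a}+L))=0$.

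The main obstacle will be handling the degenerate situations for (iii) when some $A_{ij}$ equals zero with positive probability (as in the Bernoulli switch). In these cases the corresponding ``ray'' either collapses to $\{\mathbf{0}\}$ or lies entirely on one of the coordinate axes $\{y_1=0\}$ or $\{y_2=0\}$; since $a_1,a_2>0$, neither of the boundary lines $\{y_i=a_i\}$ meets these axes, so the contribution is still zero. I would spell this case-split out explicitly to make the argument watertight; all other parts of the verification are essentially immediate from the definitions.
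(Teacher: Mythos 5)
Your verification of (i), (ii) and (iv) matches the paper, which simply declares these properties obvious. For (iii) you take a genuinely different route. The paper never looks at the support of $\mu^*$: it writes $\mathbf{a}+\partial L_\vee$ as the union of two half-lines $M_1(\mathbf{a})$, $M_2(\mathbf{a})$ (horizontal at height $a_2$, vertical at abscissa $a_1$), observes that the dilates $tM_k(\mathbf{a})$, $t\in(1,\infty)\cap\mathbb{Q}$, are pairwise disjoint with union bounded away from $\mathbf{0}$, and concludes from the homogeneity $\mu^*(tM)=t^{-\alpha}\mu^*(M)$ that $\sum_t t^{-\alpha}\mu^*(M_k(\mathbf{a}))<\infty$, which forces $\mu^*(M_k(\mathbf{a}))=0$ because the sum of $t^{-\alpha}$ over rationals in $(1,\infty)$ diverges. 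You instead unpack $\mu^*$ via Proposition~\ref{Proposition_regular_mixing} and Lemma~\ref{Lemma_Univ_RV_implies_Multi_RV} as a normalized mixture of pushforwards of the axis-supported, atomless limit measure of $\mathbf{X}$ onto rays through the origin, and note that each such ray meets the lines $\{y_i=a_i\}$ in at most one point, of pushforward-measure zero. Both arguments are correct. Yours is more concrete but leans on the specific mixture structure of $\mu^*$ and needs the degenerate case-split you describe (note that $A_{1j}+A_{2j}=1$ already rules out the total collapse of a ray to $\{\mathbf{0}\}$, so only the case of a ray lying on a coordinate axis actually occurs); the paper's scaling argument uses nothing beyond $\alpha$-homogeneity and finiteness of $\mu^*$ on sets bounded away from the origin, so it is insensitive to the structure of the switch. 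One minor imprecision on your side: when $\alpha_1\neq\alpha_2$ the limit measure of $\mathbf{X}$ charges only the axis of the heavier-tailed component, so the density description ``proportional to $x_j^{-\alpha_j-1}$ on each axis'' is not quite right; this is harmless, since all your argument requires is atomlessness along each axis.
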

	\begin{proof}
		Properties {\it (i)}, {\it (ii)} and {\it (iv)} are obvious. Consider {\it (iii)}. Fix an arbitrary $\mathbf{a}=(a_1,a_2)^\top\in\mathbb{R}^2_{>\mathbf{0}}$. It holds that \begin{equation*}
		\partial (\mathbf{a}+L) = \mathbf{a}+\partial(L) \end{equation*} and we have 
		\begin{align*}
		\partial(L_\vee) & = \{\mathbf{x}\in\mathbb{R}^2: ~(x_1=0 \wedge x_2\leq 0) \vee (x_1\leq 0 \wedge x_2=0) \} \\
		\partial(L_{\wedge,\text{sim}}) & =  \{\mathbf{x}\in\mathbb{R}^2: ~(x_1=0 \wedge x_2\geq 0) \vee (x_1\geq 0 \wedge x_2=0) \}.
		\end{align*}
		Set \begin{align*}
		M_{1}(\ba) &:= \{(x_1,x_2)\in\mathbb{R}^2: ~ x_1\leq a_1 ~\wedge ~ x_2=a_2\}, \\
		M_{2}(\ba) &:= \{(x_1,x_2)\in\mathbb{R}^2: ~ x_1= a_1 ~\wedge ~ x_2\leq a_2\},  
		\end{align*}
		such that $\mathbf{a}+\partial L_\vee = M_{1}(\ba)\cup M_{2}(\ba)$. 
		Now consider the set $M_1(\ba)$. Let $t\in(1,\infty)\cap \mathbb{Q}$, then 
		\begin{equation*}
		t M_{1}(\ba) = \{(x_1,x_2)\in\mathbb{R}^2: ~ x_1\leq t a_1 \wedge x_2=ta_2 \}. \end{equation*}
		Thus for $t_1\neq t_2$ we have $t_1M_{1}(\ba) \cap t_2M_{1}(\ba) = \emptyset$. Further the set $\bigcup_{t\in(1,\infty)\cap\mathbb{Q}} tM_{1}(\ba)$ is obviously bounded away from zero, since $(a_1,a_2)>\mathbf{0}$. We thus obtain
		\begin{align*}
		\infty > \mu^\ast \left(\bigcup_{t\in(1,\infty)\cap\mathbb{Q}} tM_{1}(\ba)\right) &= \sum_{t\in(1,\infty)\cap\mathbb{Q}} \mu^\ast (tM_{1}(\ba)) \\
		&= \sum_{t\in(1,\infty)\cap\mathbb{Q}} t^{-\min\{\alpha_1,\alpha_2\}}\mu^\ast(M_{1}(\ba))\\
		&=\mu^\ast (M_{1}(\ba)) \sum_{t\in(1,\infty)\cap\mathbb{Q}} t^{-\min\{\alpha_1,\alpha_2\}}. \end{align*}
		Since the last sum is infinite, $\mu^\ast(M_{1}(\ba))$ must be zero. The same argument applied to $M_2(\ba)$ thus yields the result for $L_\vee$. The proof for $L_{\wedge,\text{sim}}$ is analogue.
	\end{proof}
	
	\begin{proof}[Proof of Theorem \ref{Theorem_Asymptotics}]
		The first statement has been shown in Lemma \ref{Lemma_Univ_RV_implies_Multi_RV}. The asymptotics for $\Psi_\vee$ and $\Psi_{\wedge,\text{sim}}$ are direct consequences of Lemma~\ref{lem-checkconditions} and Proposition~\ref{Proposition_limit}. 
	\end{proof}
	
	For the proof of Proposition \ref{Cor-AsymptoticsRuin} we will use the following lemma.

	\begin{lemma}\label{lem-regvarlimits}
		Let $f,g$ be regularly varying with indices $\alpha,\beta>0$ and set 
		\begin{equation*}
		\zeta:=\lim_{t\to\infty} \frac{\lambda_1f(t)}{\lambda_2g(t)}\in[0,\infty],
		\end{equation*}
		for $\lambda_1,\lambda_2>0$, such that  $\zeta\in(0,\infty)$ clearly implies $\alpha=\beta$. Then for any constants $\gamma_1,\gamma_2>0$ 
		\begin{equation*}
		\lim_{t\to\infty} \frac{\lambda_1f(\gamma_1t)+ \lambda_2g(\gamma_2t)}{\lambda_1f(t)+\lambda_2g(t)} = \frac{\zeta \gamma_1^\alpha +\gamma_2^\beta}{1+\zeta},
		\end{equation*}
		where we interpret $\frac{\infty\cdot x}{\infty}:=x$. 
	\end{lemma}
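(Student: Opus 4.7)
The plan is to decompose the ratio additively and then apply regular variation to each piece, handling the possibly degenerate cases $\zeta\in\{0,\infty\}$ separately. Concretely, I would write
\[
\frac{\lambda_1 f(\gamma_1 t)+\lambda_2 g(\gamma_2 t)}{\lambda_1 f(t)+\lambda_2 g(t)}
= \frac{f(\gamma_1 t)/f(t)}{1+\dfrac{\lambda_2 g(t)}{\lambda_1 f(t)}}
+ \frac{g(\gamma_2 t)/g(t)}{\dfrac{\lambda_1 f(t)}{\lambda_2 g(t)}+1},
\]
so that each summand is a product of a regular-variation ratio with a factor governed directly by the quantity defining $\zeta$. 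Regular variation of $f$ and $g$ with indices $\alpha$ and $\beta$ (in the paper's convention, so $f(\gamma t)/f(t)\to\gamma^\alpha$) takes care of the numerators of these ratios.

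For the generic case $\zeta\in(0,\infty)$, I would plug in $\lambda_2 g(t)/(\lambda_1 f(t))\to 1/\zeta$ and $\lambda_1 f(t)/(\lambda_2 g(t))\to\zeta$ directly in the two denominators above; a short calculation then yields
\[
\frac{\gamma_1^\alpha}{1+1/\zeta}+\frac{\gamma_2^\beta}{\zeta+1} = \frac{\zeta\gamma_1^\alpha+\gamma_2^\beta}{1+\zeta},
\]
matching the claim. Note that in this case $\zeta\in(0,\infty)$ forces $\alpha=\beta$, but this is not used in the algebra and only appears implicitly through the finiteness of $\zeta$.

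For $\zeta=\infty$ I would argue that $\lambda_2 g(t)=o(\lambda_1 f(t))$, which, combined with $g(\gamma_2 t)/g(t)\to\gamma_2^\beta<\infty$, makes the second summand vanish and reduces the first summand to $\gamma_1^\alpha$; with the convention $\infty\cdot x/\infty:=x$ this is exactly $(\zeta\gamma_1^\alpha+\gamma_2^\beta)/(1+\zeta)$. The case $\zeta=0$ is completely symmetric, with the roles of $f$ and $g$ interchanged.

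The main (very mild) obstacle is bookkeeping in the degenerate cases: one must justify that the dominated summand really is negligible, which requires that $g(\gamma_2 t)/g(t)$ and $f(\gamma_1 t)/f(t)$ stay bounded in the limit — this is immediate from regular variation at positive fixed scaling $\gamma_i$ — and that the convention $\infty\cdot x/\infty:=x$ is applied consistently so that the closed-form expression on the right-hand side reproduces the separately computed limits when $\zeta\in\{0,\infty\}$.
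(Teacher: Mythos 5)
Your decomposition is exactly the one the paper uses, and the limit computation agrees with the paper's (which simply writes the convergence in one line under the stated convention for $\zeta\in\{0,\infty\}$); your extra care with the degenerate cases is a harmless elaboration of the same argument. Correct, same approach.
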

	\begin{proof}
		Obviously it holds that 
		\begin{equation*}
		\frac{\lambda_1f(\gamma_1t)+ \lambda_2g(\gamma_2t)}{\lambda_1 f(t)+\lambda_2g(t)} = \frac{\frac{f(\gamma_1t)}{f(t)}}{1+\frac{\lambda_2 g(t)}{\lambda_1 f(t)}} + \frac{\frac{g(\gamma_2t)}{g(t)}}{1+\frac{\lambda_1f(t)}{\lambda_2 g(t)}}\underset{t\to\infty}\longrightarrow \frac{\gamma_1^{\alpha}}{1+\zeta^{-1}} + \frac{\gamma_2^\beta}{1+\zeta}=\frac{\zeta \gamma_1^\alpha +\gamma_2^\beta}{1+\zeta}. \qedhere
		\end{equation*}
	\end{proof}
	
	\begin{proof}[Proof of Proposition \ref{Cor-AsymptoticsRuin}]
		We concentrate first on the $\vee$-case and start by determining the constant $C_\vee$. Using the limiting-measure property of $\mu^\ast$,  \eqref{eq-Nenner} and the properties of $\bA$ and $\bB$ we obtain
		\begin{align*}
		\lefteqn{\int_0^\infty  \mu^*(v\mathbf{c}^*+\mathbf{b} +L_{\vee})\diff v } \\ 
		=& \int_0^\infty \lim_{t\to\infty} \frac{\mathbb{P}(\bA\bB\mathbf{X}\in t(v\mathbf{c}^*+\mathbf{b} + L_{\vee}))}{\mathbb{P}(\nrm{\bA\bB\mathbf{X}}>t)} \diff v \\
		=&\int_0^\infty \lim_{t\to\infty} \left( \frac{\frac{\lambda_1}{\lambda}\mathbb{P}\left(\big(\begin{smallmatrix} A_{11} X_1\\ A_{21}X_1\end{smallmatrix}\big)\in t(v\mathbf{c}^*+\mathbf{b} + L_{\vee})\right)}{\frac{\lambda_1}{\lambda}\cdot \mathbb{P}(X_1>t) +\frac{\lambda_2}{\lambda}\cdot \mathbb{P}(X_2>t)} +\frac{\frac{\lambda_2}{\lambda}\mathbb{P}\left(\big(\begin{smallmatrix} A_{12} X_2\\ A_{22}X_2\end{smallmatrix}\big)\in t(v\mathbf{c}^*+\mathbf{b} + L_{\vee})\right)}{\frac{\lambda_1}{\lambda}\cdot \mathbb{P}(X_1>t) +\frac{\lambda_2}{\lambda}\cdot \mathbb{P}(X_2>t)}\right) \diff v. 
		\end{align*} 
		Now recall  that $L_\vee = \{(x_1,x_2)\in\mathbb{R}^2: ~ x_1>0 \vee x_2>0\}$ which yields
		\begin{equation*}
		t(v\mathbf{c}^*+\mathbf{b}+L_\vee) = \left\{(x_1,x_2)\in\mathbb{R}^2: ~(x_1>  tvc_1^\ast+tb_1) \vee (x_2>tvc_2^\ast+tb_2)\right\}. \end{equation*}
		Hence 
		\begin{align*}
		\mathbb{P}\left(\big(\begin{smallmatrix} A_{11}X_1 \\ A_{21}X_1\end{smallmatrix}\big) \in t(v\mathbf{c}^* + \mathbf{b}+L_\vee)\right)  
		&= \mathbb{P}\left(A_{11}X_1>t(vc_1^*+b_1) \vee A_{21}X_1>t(vc_2^*+b_2)\right) \\ 
		&= \mathbb{P}\left(X_1> \min\left\{\tfrac{t(vc_1^*+b_1)}{A_{11}},\tfrac{t(vc_2^*+b_2)}{A_{21}} \right\}\right) \\ 
		&= \mathbb{P}\left(X_1> t\cdot \min\left\{\tfrac{vc_1^*+b_1}{A_{11}},\tfrac{vc_2^*+b_2}{A_{21}} \right\}\right).
		\end{align*}
		A similar computation for $\big(\begin{smallmatrix} A_{12}X_2 \\ A_{22}X_2\end{smallmatrix}\big)$ thus leads to
		\begin{align*}
		\lefteqn{ \mu^*(v\mathbf{c}^*+\mathbf{b} +L_{\vee}) } \\ 
		=& \lim_{t\to\infty} \left(\frac{\lambda_1 \mathbb{P}\left(X_1> t\cdot \min\left\{\frac{vc_1^*+b_1}{A_{11}},\frac{vc_2^*+b_2}{A_{21}} \right\}\right) }{\lambda_1 \mathbb{P}(X_1>t) +\lambda_2 \mathbb{P}(X_2>t)} +\frac{\lambda_2 \mathbb{P}\left(X_2> t\cdot \min\left\{\frac{vc_1^*+b_1}{A_{12}},\frac{vc_2^*+b_2}{A_{22}} \right\}\right) }{\lambda_1 \mathbb{P}(X_1>t) +\lambda_2 \mathbb{P}(X_2>t)}\right)\\
		=& \lim_{t\to\infty} \int_{\mathbf{a}\in\mathbb{A}} \frac{\lambda_1 \mathbb{P}\left(X_1> t\cdot \min\left\{\frac{vc_1^*+b_1}{a_{11}},\frac{vc_2^*+b_2}{a_{21}} \right\}\right) + \lambda_2 \mathbb{P}\left(X_2> t\cdot \min\left\{\frac{vc_1^*+b_1}{a_{12}},\frac{vc_2^*+b_2}{a_{22}} \right\}\right) }{\lambda_1 \mathbb{P}(X_1>t) +\lambda_2 \mathbb{P}(X_2>t)}  \diff \PP_\bA \\
		=&\int_{\mathbf{a}\in\mathbb{A}} \lim_{t\to\infty} \frac{\lambda_1 \mathbb{P}\left(X_1> t\cdot \min\left\{\frac{vc_1^*+b_1}{a_{11}},\frac{vc_2^*+b_2}{a_{21}} \right\}\right) + \lambda_2 \mathbb{P}\left(X_2> t\cdot \min\left\{\frac{vc_1^*+b_1}{a_{12}},\frac{vc_2^*+b_2}{a_{22}} \right\}\right) }{\lambda_1 \mathbb{P}(X_1>t) +\lambda_2 \mathbb{P}(X_2>t)}  \diff \PP_\bA,
		\end{align*}
		where $\mathbb{P}_\bA({}\cdot{})$ denotes the probability measure induced by $A$ and $\mathbb{A}$ denotes the set of all possible realisation of $\bA$. Hereby the second equality has been obtained by conditioning on $\bA=\ba$ while the last equality follows from Lebesgue's theorem of dominated convergence. Note that Lebesgue's theorem is applicable since
		\begin{align*}
		\lefteqn{\frac{\lambda_1 \mathbb{P}\left(X_1>t\min\left\{\frac{vc_1^*+b_1}{\mathbf{a}_{11}},\frac{vc_2^*+b_2}{\mathbf{a}_{21}}\right\}\right)+\lambda_2 \mathbb{P}\left(X_2>t\min\left\{\frac{vc_1^*+b_1}{\mathbf{a}_{12}},\frac{vc_2^*+b_2}{\mathbf{a}_{22}}\right\}\right)}{\lambda_1  \mathbb{P}(X_1>t) +\lambda_2  \mathbb{P}(X_2>t)}}\\ 
		&\leq\frac{\lambda_1 \mathbb{P}\left(X_1>t\min\left\{vc_1^*+b_1,vc_2^*+b_2\right\}\right)+\lambda_2 \mathbb{P}\left(X_2>t\min\left\{vc_1^*+b_1,vc_2^*+b_2\right\}\right)}{\lambda_1 \mathbb{P}(X_1>t) +\lambda_2 \mathbb{P}(X_2>t)}\\  
		&\leq\frac{\lambda_1 \mathbb{P}\left(X_1>t\min\left\{vc_1^*+b_1,vc_2^*+b_2\right\}\right)}{\lambda_1\cdot \mathbb{P}(X_1>t)} +\frac{\lambda_2 \mathbb{P}\left(X_2>t\min\left\{vc_1^*+b_1,vc_2^*+b_2\right\}\right)}{\lambda_2\cdot \mathbb{P}(X_2>t)}\\ 
		&\to  \left(\min\{vc_1^*+b_1,vc_2^*+b_2\}\right)^{-\alpha_1} + \left(\min\{vc_1^*+b_1,vc_2^*+b_2\}\right)^{-\alpha_2} 
		\end{align*} and thus there exists $t_0>0$ independent of the realisation $\mathbf{a}$ such that for all $t>t_0$ the integrand is smaller than \begin{equation*}
		2 \left( \left(\min\{vc_1^*+b_1,vc_2^*+b_2\}\right)^{-\alpha_1} + \left(\min\{vc_1^*+b_1,vc_2^*+b_2\}\right)^{-\alpha_2} \right),
		\end{equation*} which, as a constant (with respect to $\bA$), is clearly $\mathbb{P}_\bA$-integrable. \\
		By Tonelli's theorem we thus obtain
		\begin{align*} 
		\lefteqn{C_\vee = \int_0^\infty  \mu^*(v\mathbf{c}^*+\mathbf{b} +L_{\vee})\diff v } \\
		&=  \mathbb{E}\left[\int_0^\infty  \lim_{t\to\infty} \frac{\lambda_1 \overline{F}_1\left(t\min\left\{\frac{vc_1^*+b_1}{A_{11}},\frac{vc_2^*+b_2}{A_{21}}\right\}\right)+\lambda_2 \overline{F}_2\left(t\min\left\{\frac{vc_1^*+b_1}{A_{12}},\frac{vc_2^*+b_2}{A_{22}}\right\}\right)}{\lambda_1\cdot \overline{F}_1(t) +\lambda_2\cdot \overline{F}_2(t)} \diff v\right]. 
		\end{align*}
		Applying Lemma \ref{lem-regvarlimits} now yields \eqref{eq-orasymptotic}.\\	
		The proof of \eqref{eq-andsimasymptotics} can be carried out in complete analogy. 
	\end{proof}
	\begin{proof}[Proof of Lemma \ref{lem-singleruinasymptotic}]
It is enough to prove that under the present assumptions also the assumption of Lemma \ref{lem-singleruinasymptotic_subexp} is fulfilled. Hence, we need to show that $X_1\in\operatorname{RV}(\alpha_1), ~ X_2\in\operatorname{RV}(\alpha_2)$ for $\alpha_1,\alpha_2>1$ implies that $F_I^i\in \mathcal{S}$. Recall $Y_i$ and $F^{Y_i}_I=F_I^i$ from the proof of Lemma \ref{lem-singleruinasymptotic_subexp} and assume for the moment, that neither $A_{i1}=0$ a.s., nor $A_{i2}=0$ a.s. Then, using Proposition \ref{Proposition_regular_mixing} and the same argumentation as in the proof of Lemma \ref{Lemma_Univ_RV_implies_Multi_RV} we obtain that $Y_{i}\in \RV(\min\{\alpha_1,\alpha_2\})$. 
Thus the corresponding tail functions of the integrated tail functions $\overline{F^{Y_i}}_I$
are regularly varying as well, with index $-\min\{\alpha_1,\alpha_2\}+1$, which implies $F^i_I \in \mathcal{S}$. 
If $A_{i1}=0$ a.s. then $Y_i=A_{i2}\mathds{1}_{B_{22}=1} X_2$ and clearly $Y_i\in \RV (\alpha_2)$ which again implies $F^i_I \in\mathcal{S}$. 
\end{proof}

	\begin{proof}[Proof of Proposition \ref{prop-andasympruin}]
		Assume \eqref{eq_notAsymptoticEquivalent} holds true. From Lemma \ref{lem-singleruinasymptotic} and its proof we obtain directly as $u=u_1+u_2 \to\infty$
		\begin{align*}
		\Psi_1(b_1 u)+\Psi_2(b_2 u) &\sim \frac{1}{\lambda} \left(\lambda_1 \left( \frac{1}{c_1^\ast} \int_{b_1 u}^\infty \PP(A_{11}X_1>y) \diff y + \frac{1}{c_2^\ast} \int_{b_2 u}^\infty \PP(A_{21}X_1>y) \diff y \right)  \right.\\
		&\qquad +\left. \lambda_2 \left(\frac{1}{c_1^\ast} \int_{b_1 u}^\infty \PP(A_{12}X_2>y) \diff y + \frac{1}{c_2^\ast} \int_{b_2 u}^\infty \PP(A_{22}X_2>y) \diff y  \right)\right),
		\end{align*}
		where the first two terms on the right hand side are regularly varying with index $-\alpha_1+1$, while the latter two terms are regularly varying with index $-\alpha_2+1$.\\
		Together with \eqref{eq_Asymptotics_1}, \eqref{eq-Nenner} we thus obtain that as $u\to\infty$
		\begin{align*}
		\Psi_\wedge(u)&= \Psi_1(b_1 u) + \Psi_2(b_2 u) - \Psi_{\vee}(u)\\
		&\sim \frac{1}{\lambda} \left(\lambda_1 \left( \frac{1}{c_1^\ast} \int_{b_1 u}^\infty \PP(A_{11}X_1>y) \diff y + \frac{1}{c_2^\ast} \int_{b_2 u}^\infty \PP(A_{21}X_1>y) \diff y - C_\vee u \overline{F}_1(u) \right)  \right.\\
		&\qquad +\left. \lambda_2 \left(\frac{1}{c_1^\ast} \int_{b_1 u}^\infty \PP(A_{12}X_2>y) \diff y + \frac{1}{c_2^\ast} \int_{b_2 u}^\infty \PP(A_{22}X_2>y) \diff y - C_\vee u \overline{F}_2(u) \right)\right),
		\end{align*}
		where \eqref{eq_notAsymptoticEquivalent} ensures that terms with the same index of regular variation do not cancel out asymptotically.
		Using Tonelli's theorem as in the proof of Lemma \ref{lem-singleruinasymptotic} this yields 
		\begin{align*}
		\Psi_\wedge(u)& \sim \frac{1}{\lambda} \left(\lambda_1 \left( \frac{1}{c_1^\ast} \EE\left[\int_{b_1u}^\infty \overline{F}_1(\tfrac{y}{A_{11}}) \diff y\right] + \frac{1}{c_2^\ast} \EE\left[\int_{b_2 u}^\infty \overline{F}_1(\tfrac{y}{A_{21}}) \diff y\right] - C_\vee \overline{F}_1(u) \right)  \right.\\
		&\qquad +\left. \lambda_2 \left(\frac{1}{c_1^\ast} \EE\left[\int_{b_1 u}^\infty \overline{F}_2(\tfrac{y}{A_{12}}) \diff y\right]  + \frac{1}{c_2^\ast} \EE\left[\int_{b_2 u}^\infty \overline{F}_2(\tfrac{y}{A_{22}}) \diff y\right]  - C_\vee \overline{F}_2(u) \right)\right)
		\end{align*}
		and hence \eqref{eq-andasymptotic} by substituting $v=\tfrac{y-b_i u}{c_i^\ast}$.
		If \eqref{eq_notAsymptoticEquivalent} fails, then the statement follows in analogy to the proof of Proposition \ref{Proposition_Subexponential_Risk_And}.
	\end{proof}
	
		\subsection{Proofs for Section \ref{S4}}\label{S4c}
	
	\begin{proof}[Proof of Lemma \ref{lem-singleruinlight}]
		We take up the notation used in the proof of Lemma \ref{lem-singleruinasymptotic} and denote the jumps of the resulting one-dimensional risk processes by $\{Y_{i,k},k\in\NN\}$, $i=1,2$. 
		Then the given bound for $\Psi_i(u)$ follows from \cite[Thm. IV.5.2]{asmussenalbrecher} with $\kappa_i>0$ such that $c_i\kappa_i = \lambda (\varphi_{Y_i}(\kappa_i)-1)$. (Note that in \cite{asmussenalbrecher} the constants $c$ and $\lambda$ are combined as $\beta=\lambda/c$.) But since by conditioning 
		\begin{align*}
		\varphi_{Y_i}(y) &=  \mathbb{E}\big[e^{y (B_{11}A_{i1}X_1 + B_{22} A_{i2}X_2)} \big] 
		= \frac{\lambda_1}{\lambda} \mathbb{E}\left[e^{y A_{i1}X_1 }\right] +\frac{\lambda_2}{\lambda} \mathbb{E}\left[e^{y A_{i2}X_2} \right] \\ 
		&= \frac{\lambda_1}{\lambda} \mathbb{E}\left[\varphi_{X_1}(y A_{i1}) \right] +\frac{\lambda_2}{\lambda} \mathbb{E}\left[\varphi_{X_2}(y A_{i2}) \right], \quad i=1,2,
		\end{align*}
		this is equivalent to \eqref{eq_ThLightCond}. \\
		Further by \cite[Thm. IV.5.3]{asmussenalbrecher} it holds
		$$\lim_{u\to\infty} e^{\kappa_i u} \Psi_i(u) = \frac{c_i-\lambda\EE[Y_i]}{\lambda \varphi_{Y_i}'(\kappa_i)-c_i},$$
		with $\EE[Y_i]$ as given in \eqref{eq-ewertY} and
		\begin{align*}
		\varphi_{Y_i}'(y)&= \frac{\lambda_1}{\lambda} \frac{\diff}{\diff y} \mathbb{E}\left[e^{y A_{i1}X_1 }\right] +\frac{\lambda_2}{\lambda} \frac{\diff}{\diff y}\mathbb{E}\left[e^{y A_{i2}X_2} \right]=  \frac{\lambda_1}{\lambda}  \varphi'_{A_{i1}X_i}(y) +\frac{\lambda_2}{\lambda} \varphi'_{A_{i2}X_2}(y),
		\end{align*}
		where, again by conditioning, 
		\begin{align*}
		\varphi'_{A_{ij}X_j}(y)	&= \mathbb{E}\left[A_{ij}X_j e^{y A_{ij}X_j }\right] = \EE\left[ \mathbb{E}\left[A_{ij}X_j e^{y A_{ij}X_j }|A_{ij} \right] \right]  = \EE\left[ A_{ij} \frac{\partial}{\partial(yA_{ij})} \EE[ e^{yA_{ij}X_j}|A_{ij}]\right]\\
		&=  \mathbb{E}\left[A_{ij} \varphi_{X_j}'(y A_{ij})\right], 
		\end{align*}
		which yields the given asymptotics.	
	\end{proof}

\begin{proof}[Proof of Theorem \ref{Theorem_Light_Asymptotics}]
	Recall from Section \ref{S2b} that 
	$$R_i(t)= \sum_{k=1}^{N(t)} \left((A_{i1})_k (B_{11})_k X_{1,k} + (A_{i2})_k (B_{22})_k X_{2,k}\right)	- tc_i,$$
	such that the joint cumulant exponent of the two-dimensional Lévy process $(-R_1(t_1),-R_2(t_2))$ can be determined via conditioning first on $(\bB_k)_{k\in\NN}$, then on the components of $\bA$, as 
	\begin{align*}
	k(t_1,t_2)&= \log \EE[\exp(-t_1 R_1(1) - t_2 R_2(1))] \\
	&= \log \EE\left[ \exp\left(- \sum_{k=1}^{N(1)} \Big( \big(t_1 (A_{11})_k (B_{11})_k  + t_2 (1-(A_{11})_k) (B_{11})_k\big)  X_{1,k} \right. \right. \\
	&  \qquad \qquad \left.\left.+ \left( t_1(A_{12})_k (B_{22})_k + t_2 (1-(A_{12})_k) (B_{22})_k \right)X_{2,k} \Big) + t_1c_1 + t_2 c_2 \right)\right],\\
	&=\log \EE \left[\exp\left( - \sum_{\ell=1}^{N_1(1)} (t_1 (A_{11})_\ell + t_2 (1-(A_{11})_\ell) X_{1,\ell} \right)\right] \\
	& \quad + \log \EE \left[\exp\left( - \sum_{\ell=1}^{N_2(1)} (t_1 (A_{12})_\ell + t_2 (1-(A_{12})_\ell) X_{2,\ell} \right)\right] + t_1c_1 + t_2c_2\\
	&=  \lambda_1\big(\varphi_{(t_1 A_{11} + t_2 (1-A_{11}))X_1}(1)-1\big) +  \lambda_2\big(\varphi_{(t_1 A_{12} + t_2 (1-A_{12}))X_2}(1)-1\big) + t_1c_1 + t_2c_2 \\
	&= \EE\big[\lambda_1 (\varphi_{X_1}(-t_1 A_{11} - t_2 (1-A_{11}))-1)\big] +   \EE\big[ \lambda_2 (\varphi_{X_2}(-t_1 A_{12} - t_2 (1-A_{12}))-1)\big] \\
	&\quad  + t_1c_1 + t_2c_2,
	\end{align*}
	which is by assumption \eqref{eq-lightcondition} well defined on some set $\Xi \supsetneq [0,\infty)^2$. The first two statements thus follow from \cite[Thm. 3]{Avram2009}, as long as there exist $\gamma_1,\gamma_2$, such that $k(-\gamma_1,0)=k(0,-\gamma_2)=0$ and $(-\gamma_1,0),(0,-\gamma_2)\in \Xi^\circ$, the interior of $\Xi$. But since
	\begin{align*}
	k(-x,0)&= \exp\big(\lambda_1 (\EE[\varphi_{ X_1}(x A_{11} )]-1)+ \lambda_2 (\EE[\varphi_{ X_2}(x A_{12})]-1)\big) -xc_1,
	\end{align*}
	we observe that $\gamma_1=\kappa_1$ which exists and is such that $(-\kappa_1,0)\in \Xi^\circ$ by assumption. Likewise we obtain $\gamma_2=\kappa_2$ with $(0,-\kappa_2)\in \Xi^\circ$.  \\
	The last equation now follows directly from the fact that $\Psi_{\wedge,\text{sim}}(u)\leq \Psi_\wedge(u)$. 
\end{proof}

	\small
% \bibliography{literatureswitch}

\end{document}